\setlist[itemize]{topsep=0pt,partopsep=0pt,itemsep=0pt,parsep=0pt}
\setlist[itemize,1]{label={\small\textbullet}}
\setlist[itemize,2]{label={\tiny\textbullet}}
\setlist[itemize,3]{label=$\cdot$}
\setlist[enumerate]{topsep=0pt,partopsep=0pt,itemsep=0pt,parsep=0pt}
\setlist[enumerate,1]{label=\roman*)}
\setlist[enumerate,2]{label=\alph*)}
\setlist[enumerate,3]{label=\arabic*)}
\title{A note on the 2-factor Hamiltonicity Conjecture}
\date{}
\DeclareRobustCommand{\authorthing}{
	\begin{center}
		Maximilian Gorsky\thanks{This work was supported by the Institute for Basic Science (IBS-R029-C1).}~~\!\footnote{\href{mailto:m.gorsky@pm.me}{m.gorsky@pm.me}}\\
		Discrete Mathematics Group, Institute for Basic Science (IBS), Daejeon, South Korea

        \medskip
        
            Theresa Johanni\\
		Technische Universit\"at Berlin, Germany

        \medskip

            Sebastian Wiederrecht\footnote{\href{mailto:sebastian.wiederrecht@gmail.com}{sebastian.wiederrecht@gmail.com}}\\
		School of Computing, KAIST, South Korea
\end{center}}
\author{\authorthing}
\begin{document}
\maketitle

\begin{abstract}
The 2-factor Hamiltonicity Conjecture by Funk, Jackson, Labbate, and Sheehan [JCTB, 2003] asserts that all cubic, bipartite graphs in which all 2-factors are Hamiltonian cycles can be built using a simple operation starting from $K_{3,3}$ and the Heawood graph.

We discuss the link between this conjecture and matching theory, in particular by showing that this conjecture is equivalent to the statement that the two exceptional graphs in the conjecture are the only cubic braces in which all 2-factors are Hamiltonian cycles, where braces are connected, bipartite graphs in which every matching of size at most two is contained in a perfect matching.
In the context of matching theory this conjecture is especially noteworthy as $K_{3,3}$ and the Heawood graph are both strongly tied to the important class of Pfaffian graphs, with $K_{3,3}$ being the canonical non-Pfaffian graph and the Heawood graph being one of the most noteworthy Pfaffian graphs.

Our main contribution is a proof that the Heawood graph is the only Pfaffian, cubic brace in which all 2-factors are Hamiltonian cycles.
This is shown by establishing that, aside from the Heawood graph, all Pfaffian braces contain a cycle of length four, which may be of independent interest.
\end{abstract}

\section{Introduction}\label{sec:intro}

Characterising graphs containing a cycle that spans all of its vertices, known as a \emph{Hamiltonian cycle}, is a classic problem in graph theory, which is known to be $\NP$-complete \cite{Karp1972Reducibility}.
A good characterisation of these graphs, which are called \emph{Hamiltonian}, is therefore unlikely to exist.
Nonetheless, the study of these graphs remains a very popular topic (see \cite{Gould1991Updating,Gould2014Recent}).

We focus on a problem first discussed in \cite{Diwan2002Disconnected}, concerning graphs in which all \emph{2-factors}, which are 2-regular subgraphs containing all vertices of the graph, are Hamiltonian cycles.
Such graphs are called \emph{2-factor Hamiltonian}.\footnote{This class was apparently discussed earlier by Sheehan who posed an unpublished conjecture involving them in the '80s \cite{AbreuL2017Factors}. This conjecture was later proven as the main theorem of \cite{FunkJLS20032Factor}.}
In \cite{Diwan2002Disconnected}, a construction for an infinite family of cubic, bipartite, 2-factor Hamiltonian graphs is given by starting with $K_{3,3}$ and the Heawood graph (see \Cref{fig:heawood}) and repeatedly performing the \emph{star product}.
The star product $(G_1,v_1) * (G_2,v_2)$ of two graphs $G_1$ and $G_2$ is obtained by taking two vertices $v_1 \in V(G_1)$ and $v_2 \in V(G_2)$ of degree 3, deleting both, and joining the three neighbours of $v_1$ in $G_1$ to the three neighbours of $v_2$ in $G_2$ via a matching\footnote{A \emph{matching} is a set of disjoint edges in a graph and it is called \emph{perfect} if all vertices of the graph are contained in some edge of the matching.} of size 3.
This matching is called the \emph{principal 3-edge cut} of $(G_1,v_1) * (G_2,v_2)$.
When applied outside of the context of cubic graphs, the star product exhibits some unexpected behaviour in relation to 2-factor Hamiltonian graphs, which we discuss in \Cref{app:starproduct}.

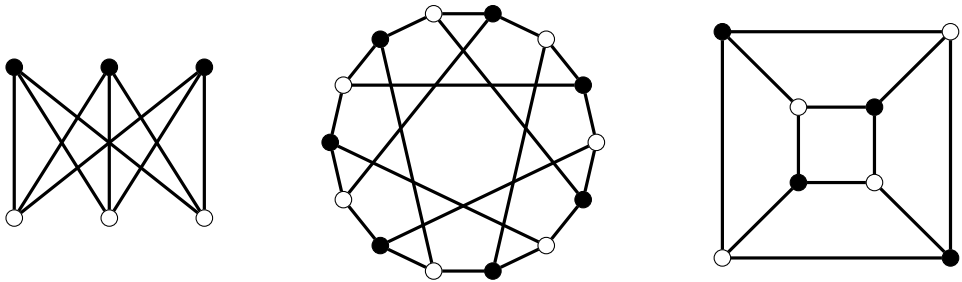
\begin{figure}[ht]
     \centering
            \raisebox{20pt}{
            \begin{tikzpicture}[scale=1]
			\node (V1) at (0,0) [draw, circle, scale=0.6] {};
			\node (V2) at (1.25,0) [draw, circle, scale=0.6] {};
			\node (V3) at (2.5,0) [draw, circle, scale=0.6] {};
			
			\node (U1) at (0,2) [draw, circle, scale=0.6, fill] {};
			\node (U2) at (1.25,2) [draw, circle, scale=0.6, fill] {};
			\node (U3) at (2.5,2) [draw, circle, scale=0.6, fill] {};
			
			\path
			(V1) edge[very thick] (U1)
            (V1) edge[very thick] (U2)
            (V1) edge[very thick] (U3)
			(V2) edge[very thick] (U1)
            (V2) edge[very thick] (U2)
            (V2) edge[very thick] (U3)
			(V3) edge[very thick] (U1)
			(V3) edge[very thick] (U2)
            (V3) edge[very thick] (U3)
			;
			\end{tikzpicture}
            }
            \qquad \quad
    		\begin{tikzpicture}[scale=1]
			
			\node (V0) at (0:0) [draw=none] {};
			
			\foreach\i in {1,3,5,7,9,11,13}
			{
				\node (V\i) at ($(V0)+({(360/14 * \i)}:1.75)$) [draw, circle, scale=0.6, fill, label={}] {};
			}
			
			\foreach\i in {2,4,6,8,10,12,14}
			{
				\node (V\i) at ($(V0)+({(360/14 * \i)}:1.75)$) [draw, circle, scale=0.6, label={}] {};
			}
			
			\foreach\i in {1,2,3,4,5,6,7,8,9,10,11,12,13}
			{
				\pgfmathtruncatemacro\iplus{\i+1}
				\path (V\i) edge[very thick] (V\iplus);
			}
			\path (V14) edge[very thick] (V1);
			
			\path
				(V1) edge[very thick] (V6)
				(V2) edge[very thick] (V11)
				(V3) edge[very thick] (V8)
				(V4) edge[very thick] (V13)
				(V5) edge[very thick] (V10)
				(V7) edge[very thick] (V12)
				(V9) edge[very thick] (V14)
			;
			
			\end{tikzpicture}
            \qquad \quad
            \raisebox{5pt}{
            \begin{tikzpicture}[scale=1]
			\node (V1) at (0,0) [draw, circle, scale=0.6] {};
			\node (V2) at (3,0) [draw, circle, scale=0.6, fill] {};
			\node (V3) at (3,3) [draw, circle, scale=0.6] {};
			\node (V4) at (0,3) [draw, circle, scale=0.6, fill] {};
			
			\node (U1) at (1,1) [draw, circle, scale=0.6, fill] {};
			\node (U2) at (2,1) [draw, circle, scale=0.6] {};
			\node (U3) at (2,2) [draw, circle, scale=0.6, fill] {};
			\node (U4) at (1,2) [draw, circle, scale=0.6] {};
			
			\path
			(V1) edge[very thick] (V2)
			(V2) edge[very thick] (V3)
			(V3) edge[very thick] (V4)
			(V4) edge[very thick] (V1)
			(U1) edge[very thick] (U2)
			(U2) edge[very thick] (U3)
			(U3) edge[very thick] (U4)
			(U4) edge[very thick] (U1)
			(V1) edge[very thick] (U1)
			(V2) edge[very thick] (U2)
			(V3) edge[very thick] (U3)
			(V4) edge[very thick] (U4)
			;
			\end{tikzpicture}
            }
            \caption{The graph depicted to the left is $K_{3,3}$.
            The drawing in the middle depicts the Heawood graph.
            On the very right a drawing of the cube is given.}
            \label{fig:heawood}
\end{figure}

In \cite{FunkJLS20032Factor}, Funk et al.\ focus on characterising $k$-regular, bipartite, 2-factor Hamiltonian graphs and show that these can only exist if $k \in \{ 2,3 \}$.
With the graphs for $k=2$ being rather dull, all that remains is to characterise cubic, bipartite, 2-factor Hamiltonian graphs and they conjecture that this should be the class previously identified by Diwan \cite{Diwan2002Disconnected}.

\begin{conjecture}[Funk, Jackson, Labbate, and Sheehan \cite{FunkJLS20032Factor}]\label{con:2FH}
    All cubic, bipartite, 2-factor Hamiltonian graphs can be constructed from $K_{3,3}$ and the Heawood graph via the star product.
\end{conjecture}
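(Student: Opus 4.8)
The plan is to derive the conjecture from the two reductions the paper provides, plus one genuinely new argument. By the equivalence established in the paper it suffices to prove that $K_{3,3}$ and the Heawood graph are the only cubic braces in which every 2-factor is a Hamiltonian cycle; the engine behind that equivalence is the star product, via Diwan's observation that a star product of two cubic bipartite graphs is 2-factor Hamiltonian exactly when both factors are --- for the non-obvious direction one tracks how a 2-factor of $(G_1,v_1)*(G_2,v_2)$ meets the principal 3-edge-cut, necessarily in exactly two edges since a cubic bipartite graph minus a vertex has odd order and hence no 2-factor, and re-inserts $v_1$ and $v_2$ to recover 2-factors of $G_1$ and $G_2$ that are Hamiltonian precisely when the original one is --- combined with the fact that a cubic bipartite graph fails to be a brace exactly when it has a nontrivial 3-edge-cut, i.e.\ an inverse star decomposition into two strictly smaller cubic bipartite graphs. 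By the paper's main theorem the Heawood graph is moreover the only \emph{Pfaffian} cubic 2-factor Hamiltonian brace, so --- since $K_{3,3}$ is the canonical non-Pfaffian bipartite graph --- everything reduces to the single statement that the only non-Pfaffian cubic brace in which every 2-factor is a Hamiltonian cycle is $K_{3,3}$.

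To establish this I would first record the reformulation that a cubic bipartite graph $G$ is 2-factor Hamiltonian if and only if there is no cycle $C$ with $V(C)\neq V(G)$ for which $G-V(C)$ has a 2-factor; in particular, exhibiting \emph{any} spanning disjoint union of at least two cycles of $G$ already forces a contradiction. Then, using the analysis of 4-cycles in cubic 2-factor Hamiltonian braces that already underpins the paper's Pfaffian argument --- that such a brace containing a 4-cycle $u_1v_1u_2v_2$ must be $K_{3,3}$, since either two of the four edges leaving the cycle share an endpoint $w$, in which case $\{u_1,u_2,v_1,v_2,w\}$ is cut off by exactly three edges (a nontrivial 3-edge-cut, impossible in a brace other than $K_{3,3}$), or those four edges form a matching whose extension to a perfect matching has a complement containing $u_1v_1u_2v_2$ as a proper component --- a non-Pfaffian cubic 2-factor Hamiltonian brace other than $K_{3,3}$ has girth at least $6$.

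The genuinely new part concerns such a hypothetical graph $G$. Being non-Pfaffian and bipartite, $G$ contains $K_{3,3}$ as a matching minor (Little's theorem, refined structurally by Robertson, Seymour and Thomas and independently by McCuaig): there is a conformal subgraph $S$ of $G$ --- so $G-V(S)$ has a perfect matching --- that is a bipartite subdivision of $K_{3,3}$, i.e.\ six branch vertices joined by nine internally disjoint odd alternating paths. Since $G$ is cubic the branch vertices spend all three of their edges inside $S$, and each path-interior vertex has exactly one edge off $S$; since girth at least $6$ forbids all nine paths from having length $1$, at least one path is long, so $S\neq G$. Now I would turn the 2-factor Hamiltonicity of $K_{3,3}$ against $G$: for a perfect matching $N$ of $K_{3,3}$ the complementary 6-cycle lifts, through the six paths off $N$, to a cycle $\widetilde Z$ in $G$ passing through all six branch vertices, and then $V(G)\setminus V(\widetilde Z)$ is exactly $V(G)\setminus V(S)$ together with the interiors of the three paths realising $N$. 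If one can produce a 2-factor of $G-V(\widetilde Z)$ --- for instance two edge-disjoint perfect matchings of it, whose union is spanning and 2-regular --- then $\widetilde Z$ together with that 2-factor is a disconnected 2-factor of $G$, contradicting 2-factor Hamiltonicity and finishing the proof.

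Finding that 2-factor of $G-V(\widetilde Z)$ is the main obstacle. A priori a vertex of $V(G)\setminus V(S)$ may have all three of its neighbours among the interiors of the six lifted paths and so become isolated in $G-V(\widetilde Z)$, and even with no isolated vertex a graph of minimum degree 2 need not carry a 2-factor. I expect the way out to be to choose the matching minor extremally --- with $V(G)\setminus V(S)$ inclusion-minimal, or $|V(S)|$ maximum, or the alternating paths shortest --- and to exploit the freedom in the choice of $N$, together with the fact that $G$, being a brace, is cyclically 4-edge-connected, in order to exclude the small edge cuts that a deficient subset of $V(G)\setminus V(S)$ would produce: an argument in the spirit of, but considerably more delicate than, the 4-cycle cut analysis above, and perhaps requiring McCuaig's generation theorem for braces specialised to braces that carry a $K_{3,3}$ matching minor. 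The remaining ingredients --- the two reductions and the girth bound --- amount to bookkeeping around results already in hand.
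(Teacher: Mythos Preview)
The statement you are trying to prove is \emph{Conjecture}~\ref{con:2FH}, and the paper does not prove it; it is explicitly left open. What the paper actually establishes is (i) the equivalence of Conjecture~\ref{con:2FH} with Conjecture~\ref{con:main} on braces, and (ii) the Pfaffian half of Conjecture~\ref{con:main}, via \Cref{thm:main} on the girth of Pfaffian braces. The non-Pfaffian half is restated at the end of the introduction as a conjecture and is not resolved. So there is no ``paper's own proof'' to compare against for the full statement.

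Your proposal correctly isolates the reduction to braces and the Pfaffian case as already available, and then attempts the genuinely open non-Pfaffian case. The gap you yourself flag is real and is not merely a technicality. After lifting a $6$-cycle of $K_{3,3}$ to a cycle $\widetilde Z$ through the conformal bisubdivision $S$, the graph $G-V(\widetilde Z)$ need not admit a $2$-factor: vertices of $V(G)\setminus V(S)$ may have all three neighbours on $\widetilde Z$ and become isolated, and interior vertices of the three $N$-paths that abut branch vertices drop to degree at most $2$, so even minimum degree $2$ is not guaranteed, let alone a $2$-factor. Your suggested remedies --- choosing $S$ extremally, varying $N$, invoking cyclic $4$-edge-connectivity or McCuaig's brace generation --- are plausible heuristics, but none of them is an argument; in particular nothing prevents, for every choice of $N$, some vertex outside $S$ from seeing only $\widetilde Z$. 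This is precisely the obstruction that has kept the conjecture open since 2003, and your outline does not overcome it.

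A smaller point: in your $4$-cycle analysis you assert that two off-cycle edges sharing an endpoint $w$ produce a nontrivial $3$-edge cut, hence a tight cut. For \Cref{lem:tightcutshape} to apply you also need those three outgoing edges to form an induced matching; this does follow, but it requires checking that their outer endpoints are distinct and pairwise non-adjacent (using $3$-connectivity and bipartiteness), which you skip. In any case this girth-$6$ reduction is already in the literature the paper cites, so it is not where the difficulty lies.
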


Somewhat unconventionally, our interest in this conjecture comes from the vantage point of matching theory.
In this context $K_{3,3}$ and the Heawood graph are usually seen in opposing roles, as they are both key figures in the study of \emph{Pfaffian graphs}.
An undirected graph $G$ with a perfect matching is called \emph{Pfaffian} if there exists an orientation of its edges such that for every perfect matching $M$ of $G$ and every cycle $C$ that alternates between edges in $M$ and $E(G) \setminus M$, the number of edges in $C$ oriented in the same way is odd, for either direction of traversal.

Despite the somewhat esoteric definition, Pfaffian graphs have a rich variety of applications (see \cite{McCuaig2004Polyas,Thomas2006Survey}) and in particular, bipartite, Pfaffian graphs are the key ingredient in the solution of an old problem posed by \Polya \cite{Polya1913Aufgabe} concerning the computation of the permanent of a matrix \cite{RobertsonST1999Permanents,McCuaig2004Polyas}.
Finding a useful characterisation of bipartite, Pfaffian graphs that could be applied to develop a polynomial time recognition algorithm was a long-standing open problem.
An early and influential characterisation by Little \cite{Little1975Characterization} prominently features $K_{3,3}$, though no algorithm could be derived from this result.
A \emph{bisubdivision} of a graph $G$ is obtained by replacing the edges of $G$ with internally disjoint paths of odd length.

\begin{theorem}[Little \cite{Little1975Characterization}]
    A bipartite graph $G$ with a perfect matching is Pfaffian if and only if it does not contain a bisubdivision $H$ of $K_{3,3}$ such that $G - H$ has a perfect matching.
\end{theorem}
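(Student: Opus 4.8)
The plan is to prove the two implications separately, with the ``only if'' direction being the more elementary one. The starting point is that $K_{3,3}$ itself admits no Pfaffian orientation. Using the standard fact that a bipartite graph is Pfaffian exactly when it has an orientation whose signed biadjacency matrix $A$ satisfies $|\det A| = \mathrm{per}(B)$, where $B$ is its $0/1$ biadjacency matrix, one observes that for $K_{3,3}$ the matrix $B$ is the all-ones $3\times 3$ matrix, so $\mathrm{per}(B) = 6$, whereas every $3\times 3$ matrix with entries in $\{\pm 1\}$ has determinant divisible by $4$ (subtract the first row from the other two) and of absolute value at most $3^{3/2} < 6$. To transfer this to a nice bisubdivision $H$ of $K_{3,3}$ sitting inside a Pfaffian graph $G$, I would use two stability observations: (i) a nice subgraph $H$ of a Pfaffian graph $G$ is itself Pfaffian, because the restriction of a Pfaffian orientation of $G$ works---every central cycle $C$ of $H$ (an even cycle with $H - V(C)$ having a perfect matching) is also central in $G$, since a perfect matching of $H - V(C)$ together with one of $G - V(H)$ is a perfect matching of $G - V(C)$; and (ii) replacing a single edge by an internally disjoint odd path, as well as the reverse operation, preserves being Pfaffian, which one checks directly by noting that in every perfect matching such a path is traversed in one of its two alternating ways and that central cycles, together with their orientation parities, correspond. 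Iterating (ii) shows that $K_{3,3}$ is Pfaffian whenever some bisubdivision of it is, so together with (i) no Pfaffian bipartite graph can contain a nice bisubdivision of $K_{3,3}$.

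For the ``if'' direction I would first reduce to a cleaner setting. Passing to connected components, and then deleting every edge that lies in no perfect matching---these can be oriented arbitrarily at the end, never occur on a central cycle, and their removal leaves the vertex set intact and so cannot create a nice bisubdivision of $K_{3,3}$---one may assume that $G$ is connected and matching covered. Now apply the tight-cut decomposition, repeatedly splitting $G$ along a cut met exactly once by every perfect matching: this yields a list of braces $B_1, \dots, B_k$, and by a theorem of Lov\'asz the graph $G$ is Pfaffian if and only if every $B_i$ is. A standard lifting argument---re-expanding each contracted shore and routing the bisubdivision through it, using the matching structure guaranteed by tightness---shows that a nice bisubdivision of $K_{3,3}$ in some $B_i$ would yield one in $G$, so each $B_i$ inherits the hypothesis. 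It therefore suffices to prove that a brace containing no nice bisubdivision of $K_{3,3}$ is Pfaffian.

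This remaining statement is the structural heart of the theorem and the step I expect to be the real obstacle. The self-contained route, in the spirit of Little's original argument \cite{Little1975Characterization}, is an induction on the number of edges: pick a suitable edge $e$ of the brace $G$; as $G - e$ is a subgraph of $G$ it too contains no nice bisubdivision of $K_{3,3}$, and the lifting argument passes this to the braces of its tight-cut decomposition, so induction supplies a Pfaffian orientation of $G - e$; it then remains to orient $e$ so that the odd-orientation property survives. This last step is where essentially all the difficulty concentrates: one must show that the \emph{only} obstruction to reorienting $e$ is a bisubdivision of $K_{3,3}$ through $e$ that is nice in $G$, which requires a delicate analysis of how the central cycles through $e$ interact. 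The alternative, which subsumes the entire argument, is to invoke the structural classification of Pfaffian braces \cite{RobertsonST1999Permanents,McCuaig2004Polyas}: every brace is either isomorphic to the Heawood graph or is built from planar braces by a prescribed gluing operation, all such braces are Pfaffian, and conversely every non-Pfaffian brace contains a nice bisubdivision of $K_{3,3}$.
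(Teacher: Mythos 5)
First, a point of comparison: the paper does not prove this statement at all---it is quoted verbatim as Little's theorem from \cite{Little1975Characterization}---so there is no in-paper argument to measure your proposal against, and it has to stand on its own. The ``only if'' half does: the computation $\mathrm{per}(B)=6$ for $K_{3,3}$ against $|\det A|\leq 4$ for any $3\times 3$ matrix with entries in $\{\pm 1\}$ (divisibility by $4$ plus the Hadamard bound) correctly shows $K_{3,3}$ is not Pfaffian, and your two stability facts---a nice subgraph of a Pfaffian graph is Pfaffian, and replacing an edge by an internally disjoint odd path preserves and reflects Pfaffianness---are standard, with essentially the right justifications. The preliminary reductions (components, deleting edges in no perfect matching, noting such edges lie on no nice cycle) are also sound.

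The ``if'' direction, which is the entire content of Little's theorem, is not actually proved. Your inductive sketch stops exactly at the decisive step: showing that when the remaining edge $e$ cannot be oriented consistently with a Pfaffian orientation of $G-e$, one can extract a bisubdivision of $K_{3,3}$ through $e$ that is nice in $G$. You offer no argument for this, and it is not a routine ``delicate analysis''---it is the substantial case analysis that constitutes Little's paper. Your fallback of invoking the brace classification is logically sufficient but amounts to assuming a far stronger and much later theorem; note also that \Cref{thm:pfaffian} as stated here gives only the Heawood/trisum structure, so you would additionally need the part of \cite{RobertsonST1999Permanents,McCuaig2004Polyas} asserting that every non-Pfaffian brace contains a nice bisubdivision of $K_{3,3}$, plus the lifting lemma you label ``standard'': that a nice bisubdivision of $K_{3,3}$ in a tight cut contraction can be rerouted through the contracted shore to one that is nice in $G$. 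That lifting is a genuine lemma requiring a matching-theoretic argument (it is proved in the cited works, not a remark), so even on the citation route your write-up leaves the hard implication as a gap.
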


The resolution of the problem of characterising Pfaffian, bipartite graphs was ultimately found by McCuaig, and Robertson, Seymour, and Thomas \cite{McCuaigRST1997Permanents}, who published their solutions independently \cite{RobertsonST1999Permanents,McCuaig2004Polyas}.
We will use their characterisation for our main result and thus we take a moment to define the necessary concepts here.

A \emph{cut} $\Cut{X}$ around a set of vertices $X \subseteq V(G)$ in a graph $G$ is the set of edges in $G$ with one endpoint in $X$ and the other in $\Compl{X} = V(G) \setminus X$.
If every edge of a graph is contained in a perfect matching of that graph, we call the graph \emph{matching covered}.
In a matching covered graph $G$, a cut $\Cut{X}$ such that $|\Cut{X} \cap M| = 1$ holds for every perfect matching $M$ of $G$ is called \emph{tight} and $\Cut{X}$ is called \emph{trivial} if $|X| = 1$ or $|\Compl{X}| = 1$.
Given a tight cut $\Cut{X}$ in a matching covered graph $G$, the two \emph{tight cut contractions} derived from $\Cut{X}$ are constructed by contracting $X$, respectively $\Compl{X}$, into a single vertex and removing all loops and parallel edges resulting from this.

In \cite{Lovasz1987Matching} \Lovasz introduced a procedure called the \emph{tight cut decomposition} in which tight cut contractions are repeatedly performed on a matching covered graph until we are left with a list of graphs that do not have non-trivial tight cuts.
Graphs without non-trivial tight cuts are called \emph{braces} if they are bipartite and \emph{bricks} if they are non-bipartite.
The contents of the list at the end of the procedure are called the bricks and braces of the graph that was decomposed.
For any given graph this list is uniquely determined and does not depend on the choice of the tight cuts in the decomposition procedure and the decomposition can be found in polynomial time \cite{Lovasz1987Matching}.

For many problems it suffices to consider bricks and braces, as several properties are preserved under tight cut contractions.
This for example also holds for being Pfaffian.

\begin{theorem}[Vazirani and Yannakakis \cite{VaziraniY1989Pfaffian}]
    A matching covered graph is Pfaffian if and only if all of its bricks and braces are Pfaffian.
\end{theorem}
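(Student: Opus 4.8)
The plan is to localise the Pfaffian property to a single tight cut contraction and then induct. By Lovász's tight cut decomposition theory recalled above, for any non-trivial tight cut $\Cut{X}$ of $G$ the list of bricks and braces of $G$ is the union of those of the two tight cut contractions derived from $\Cut{X}$, and both of those contractions are matching covered on strictly fewer vertices. So, writing $G_1$ for $G$ with $\Compl{X}$ contracted to a single vertex $u^\ast$ and $G_2$ for $G$ with $X$ contracted to a single vertex $v^\ast$, it suffices by induction on $|V(G)|$ to prove that $G$ is Pfaffian if and only if both $G_1$ and $G_2$ are Pfaffian; the base case, where $G$ has no non-trivial tight cut and is thus its own only brick or brace, is vacuous.

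I would phrase the one-step statement through signs of perfect matchings. Fix a linear order on $V(G)$ in which every vertex of $X$ precedes every vertex of $\Compl{X}$, and compatible orders on $V(G_1)$ (with $u^\ast$ last) and $V(G_2)$ (with $v^\ast$ first). For an orientation $D$ and a perfect matching $M=\{a_1b_1,\dots,a_nb_n\}$ with each $a_i$ before $b_i$, let $\mathrm{sgn}_D(M)$ be the sign of the permutation sorting the sequence $(a_1,b_1,\dots,a_n,b_n)$, multiplied by $\prod_i\varepsilon_D(a_ib_i)$, where $\varepsilon_D(ab)=+1$ if $D$ orients $a\to b$ and $-1$ otherwise. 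Two standard facts drive the argument: $\mathrm{sgn}_D(M)$ does not depend on the chosen listing of $M$, and $D$ is a Pfaffian orientation exactly when $\mathrm{sgn}_D$ is constant over all perfect matchings of $G$, since $\mathrm{sgn}_D(M)\mathrm{sgn}_D(M')$ factors over the alternating cycles of $M\triangle M'$, each factor recording whether that cycle is oddly oriented. The technical core is then the identity
\[
  \mathrm{sgn}_D(M)\;=\;\varepsilon_D(e)\cdot\mathrm{sgn}_{D_1}(M_1)\cdot\mathrm{sgn}_{D_2}(M_2),
\]
valid when $\Cut{X}$ is a matching: here $e=uv$ with $u\in X$ is the unique cut edge of $M$, the sets $M_1=(M\cap E(G[X]))\cup\{uu^\ast\}$ and $M_2=(M\cap E(G[\Compl{X}]))\cup\{v^\ast v\}$ are the perfect matchings of $G_1$ and $G_2$ induced by $M$, and $D_1,D_2$ are the orientations $D$ induces on $G_1,G_2$. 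With the chosen orders this is a direct computation: the sorting permutation factors as a block permutation on the $X$-coordinates and on the $\Compl{X}$-coordinates, exactly reproducing the permutations occurring for $M_1$ and for $M_2$, while the cut edge $e$ contributes $\varepsilon_D(e)$ once on the left and $\varepsilon_D(e)^2=1$ on the right.

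Both directions then drop out of this identity. If $G$ is Pfaffian, fix $D$ with $\mathrm{sgn}_D\equiv c$. For a fixed edge $u_iu^\ast$ at $u^\ast$, any two perfect matchings of $G_1$ through it extend to perfect matchings of $G$ with the same induced matching $M_2$ on $G_2$ — matching coveredness of $G_2$ supplies the required matching on $\Compl{X}$ — so by the identity $\mathrm{sgn}_{D_1}$ takes a single value $a_i$ on that class; reversing precisely the edges $u_iu^\ast$ with $a_i=-1$ turns $\mathrm{sgn}_{D_1}$ into a constant, exhibiting a Pfaffian orientation of $G_1$, and symmetrically for $G_2$. Conversely, given Pfaffian orientations $D_1$ of $G_1$ and $D_2$ of $G_2$, reverse $D_1$ around suitable vertices of $X$ and $D_2$ around suitable vertices of $\Compl{X}$ — vertex reversals preserve being Pfaffian, and since $\Cut{X}$ is a matching such a reversal flips a single edge at $u^\ast$, respectively $v^\ast$ — so that in both orientations every cut edge points from $X$ to $\Compl{X}$, that is, as $uu^\ast$ in $G_1$ and as $v^\ast v$ in $G_2$. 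Gluing $D_1$ on $G[X]$, $D_2$ on $G[\Compl{X}]$, and these common directions on $\Cut{X}$ yields an orientation $D$ of $G$ whose induced orientations on $G_1,G_2$ are exactly $D_1,D_2$, so the identity forces $\mathrm{sgn}_D$ to be constant and $D$ to be Pfaffian.

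The part demanding the most care is the standing assumption that $\Cut{X}$ is a matching. A general tight cut may have several edges at one vertex of $X$ (or of $\Compl{X}$), and under the contraction these become parallel and are identified, so the ``induced orientation'' is not even well defined, and both the sign identity and the two reversal steps must be re-derived while bookkeeping the identification. I would handle this either by arguing that the tight cut decomposition can always be routed through matching tight cuts, or by re-running the sign computation and the edge reversals directly in the presence of such parallel cut edges; every other ingredient of the argument is insensitive to this point.
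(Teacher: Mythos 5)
First, a point of reference: the paper does not prove this theorem at all — it is quoted from Vazirani and Yannakakis as a black box — so your argument can only be compared with the standard proofs in the literature. Your route (induct along the tight cut decomposition, using that the bricks and braces of $G$ are exactly those of the two contractions of any non-trivial tight cut, and prove the one-step statement by comparing matching signs) is precisely the classical argument, and under your standing assumption that $\Cut{X}$ is a matching everything you write checks out: the sign identity is correct with your ordering conventions, the per-class constancy at $u^\ast$ (using that $G_2$ is matching covered to extend two matchings of $G_1$ through the same edge at $u^\ast$ with a common $M_2$) is sound, and the gluing step in the converse direction works; note only that reversing a proper subset of the edges at $u^\ast$ is justified by directly verifying that the new sign function is constant, not by any reversal-invariance, and that the base case is trivial rather than vacuous.

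The issue you flag is, however, a genuine one, and of your two proposed repairs only the second is viable. Non-matching tight cuts cannot be legislated away: barrier cuts typically have several cut edges meeting the same barrier vertex. For a concrete obstruction to your first repair, take $G=K_{3,3}$ minus the edge $a_3b_3$; this graph is matching covered, and its unique non-trivial tight cut is the barrier cut $\Cut{X}$ with $X=\{a_3,b_1,b_2\}$, which consists of the four edges between $\{a_1,a_2\}$ and $\{b_1,b_2\}$ and is not a matching, so the decomposition cannot be ``routed through matching tight cuts''. Fortunately the second repair goes through with little pain, because the factorisation of the sorting permutation never used that $\Cut{X}$ is a matching — only tightness, i.e.\ that exactly one matching edge crosses — so all that needs care is the $\varepsilon$-bookkeeping. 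In the forward direction you never need an induced orientation: give $G_1$ any orientation agreeing with $D$ on $G[X]$; the factor relating $\mathrm{sgn}_D(M)$ to $\mathrm{sgn}_{D_1}(M_1)\mathrm{sgn}_{D_2}(M_2)$ then depends only on the chosen cut edge $e_i$ at $u_i$ and the fixed orientations, not on the rest of $M$, which is all the per-class argument requires (fix one cut edge at $u_i$ and one perfect matching of $G_2$ through the corresponding edge at $v^\ast$ when extending). In the converse direction, normalise $D_1$ and $D_2$ by vertex reversals so that every edge at $u^\ast$ points into $u^\ast$ and every edge at $v^\ast$ points out of $v^\ast$, and orient every edge of $\Cut{X}$ from $X$ to $\Compl{X}$; then for every perfect matching of $G$ the three $\varepsilon$-factors attached to its crossing edge equal $+1$ and the identity yields a constant sign. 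With that modification your induction is complete and does prove the theorem.
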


Characterising Pfaffian bricks remains an important open problem, but we are now ready to present the characterisation of Pfaffian braces that resolves the problem we presented earlier.
Let $G_1, G_2, G_3$ be three bipartite graphs, such that their pairwise intersection is a 4-cycle $C$, and we have $V(G_i) \setminus V(C) \neq \emptyset$ for all $i \in [3]$.
Further, let $S \subseteq E(C)$ be some subset of the edges of $C$.
The \emph{trisum of $G_1, G_2, G_3$ at $C$} is a graph $\bigcup_{i=1}^3 G_i - S$.

\begin{theorem}[McCuaig \cite{McCuaig2004Polyas}, Robertson, Seymour, and Thomas \cite{RobertsonST1999Permanents}]\label{thm:pfaffian}
    A brace is Pfaffian if and only if it is the Heawood graph or it can be constructed from planar braces by repeated use of the trisum operation.
\end{theorem}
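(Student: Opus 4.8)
The plan is to prove the two implications of the characterisation separately. The ``if'' direction is comparatively routine; the ``only if'' direction is the substantial one and is exactly the content of \cite{McCuaig2004Polyas,RobertsonST1999Permanents}, so for it I will only outline the strategy.

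For the ``if'' direction I would verify three facts. First, every planar brace is Pfaffian: this is Kasteleyn's classical argument, namely that a plane graph admits an orientation in which the number of clockwise-directed edges on the boundary of each bounded face is odd, and then, summing this parity over the bounded faces enclosed by an $M$-alternating cycle $C$, one deduces that $C$ itself is clockwise-odd. Second, the Heawood graph is Pfaffian, which one checks directly by exhibiting an explicit orientation. Third, the trisum operation preserves being Pfaffian: starting from Pfaffian orientations of $G_1, G_2, G_3$, one first uses the fact that reversing all edges across an edge cut preserves Pfaffianness (a cycle meets any cut in an even set of edges) to make the three orientations compatible on the shared $4$-cycle $C$, and one then shows that every alternating cycle of the trisum can be obtained, as a symmetric difference, from alternating cycles living inside single $G_i$'s together with alternating subpaths through $C$, tracking the clockwise-parity along the way. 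Deleting the edges in $S \subseteq E(C)$ only removes edges, so it cannot hurt, provided the result is still matching covered --- which the definition of the trisum requires.

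For the ``only if'' direction, let $G$ be a Pfaffian brace that is neither the Heawood graph nor obtainable from planar braces by trisums, and choose such a $G$ with the fewest vertices. By Little's theorem, $G$ has no bisubdivision $H$ of $K_{3,3}$ with $G - H$ having a perfect matching. The driving tool is a brace generation theorem: every brace on at least six vertices is obtained from a smaller brace by one of a short list of reversible expansion moves (adding an edge, or adding or splitting a vertex of degree at least three, in a prescribed manner). Reversing such a move applied to $G$ produces a smaller brace $G'$, which one argues is still Pfaffian; by minimality $G'$ is then the Heawood graph, a planar brace, or a trisum. One now goes through the expansion moves one by one: either the move keeps $G$ inside the described class (for instance, inserting an edge into a plane brace so that planarity survives, or so that the new edge closes a $4$-cycle along which $G$ becomes a trisum), or the move creates a bisubdivision of $K_{3,3}$ whose complement still has a perfect matching, contradicting that $G$ is Pfaffian. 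The unique configuration in which neither alternative occurs --- a non-planar expansion that remains Pfaffian --- is precisely what pins down the Heawood graph.

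The hard part, and the reason this is a theorem rather than an exercise, is the last case analysis: one must understand exactly how the tight cuts and small edge cuts of $G$ interact with the plane embeddings of the pieces of $G'$, keep track of the $4$-cycles along which a trisum can be formed, and rule out every way of attaching new structure to a planar brace that neither preserves the planar-or-trisum structure nor forces a forbidden $K_{3,3}$. Executing this in full detail is the main work of McCuaig and of Robertson, Seymour, and Thomas, and a genuinely self-contained proof is beyond the scope of a short sketch; I would therefore rely on their analysis for this direction.
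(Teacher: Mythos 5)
The paper does not prove this statement: Theorem~\ref{thm:pfaffian} is quoted directly from McCuaig and from Robertson, Seymour, and Thomas, so there is no in-paper argument to compare against, and your decision to defer the ``only if'' direction to those works is exactly how the paper treats it. Your sketch of the ``if'' direction (Kasteleyn's orientation for planar braces, an explicit orientation for the Heawood graph, preservation of Pfaffianness under trisums and under passing to subgraphs that keep a perfect matching) is sound, with the caveats that the trisum-preservation step is itself a nontrivial lemma in the cited papers rather than a routine check, and that your generation-theorem outline for the hard direction follows McCuaig's route, whereas Robertson, Seymour, and Thomas proceed by a different reduction.
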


Thus $K_{3,3}$ is the most prominent example of a non-Pfaffian graph, the Heawood graph is a prominent Pfaffian graph, and both of these graphs are braces.
Next we reformulate \Cref{con:2FH} to focus on braces.
To facilitate this, we use a definition of braces that is a little easier to handle than the non-existence of non-trivial tight cuts.

Let $k$ be a positive integer and let $G$ be a graph on at least $2k + 2$ vertices, then $G$ is called \emph{$k$-extendable} if every matching of size $k$ is contained in a perfect matching of $G$.
It turns out that by using Lemma 1.4 from \cite{Lovasz1987Matching} and Theorem 2.2 from \cite{Plummer1986Matching}, we can equivalently characterise graphs as braces if they are either $C_4$ or have at least six vertices and are 2-extendable.
For later use, we state a helpful result from the article in which Plummer introduces extendability.

\begin{theorem}[Plummer \cite{Plummer1980$n$extendable}]\label{thm:extendabilitybasics}
    For all positive integers $k$, any $k$-extendable graph is $(k-1)$-extendable and $(k+1)$-connected.\footnote{A connected graph $G$ is \emph{$k$-connected} if no set $S \subseteq V(G)$ with $|S| \leq k-1$ exists such that $G - S$ is disconnected.}
\end{theorem}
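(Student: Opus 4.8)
The plan is to isolate one auxiliary fact and deduce both halves of the statement from it. The fact is that a $k$-extendable graph $G$ has minimum degree at least $k+1$: if a vertex $v$ had $\deg(v)\le k$, one could build a matching of size $k$ saturating all of $N(v)$ but not $v$ (start from a perfect matching, delete the edge at $v$, and reroute one further edge to pick up the last uncovered neighbour of $v$), and such a matching extends to no perfect matching, since $v$'s partner would have to lie in the already-saturated set $N(v)$ --- contradicting $k$-extendability. I use the standard reading of ``$k$-extendable'' as including connectedness and the existence of a perfect matching; the case $k=1$ (where $(k-1)$-extendability merely asserts a perfect matching) is trivial, so assume $k\ge 2$.

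\textbf{$(k+1)$-connectivity.} Suppose $S\subseteq V(G)$ with $|S|\le k$ disconnects $G$, say $V(G)\setminus S=H_1\sqcup H_2$ with no $H_1$--$H_2$ edge and $|H_1|\le|H_2|$. If $H_1$ is very small, a vertex of it has all neighbours in $S$, hence degree $\le|S|\le k$, contradicting $\delta(G)\ge k+1$; so $H_1$ leaves room to manoeuvre. Fix a perfect matching $P$. The number of $P$-edges between $H_1$ and $S$ is $|H_1|$ minus the number of $H_1$-vertices matched inside $H_1$, so it has the parity of $|H_1|$ and is at most $|S|\le k$. From these edges, a reroute of one $S$-vertex between $H_2$ and $H_1$, and padding by edges inside $H_1$ or $H_2$, one assembles a matching $M^\ast$ of size exactly $k$ that saturates $S$ and has $|H_1\setminus V(M^\ast)|$ odd (a short case check, the only trouble being tiny $H_1$, already excluded). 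Extend $M^\ast$ to a perfect matching $P^\ast$; then $P^\ast\setminus M^\ast$ is a perfect matching of $G-V(M^\ast)$, and since $S\subseteq V(M^\ast)$ this graph has no edge from $H_1\setminus V(M^\ast)$ to the rest, so $P^\ast\setminus M^\ast$ restricts to a perfect matching of $G[H_1\setminus V(M^\ast)]$, forcing $|H_1\setminus V(M^\ast)|$ even --- a contradiction. Hence $\kappa(G)\ge k+1$.

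\textbf{$(k-1)$-extendability.} Let $F$ be a matching of size $k-1$ and put $U=V(G)\setminus V(F)$, so $|U|=|V(G)|-2(k-1)\ge 4$. Since $F$ extends to a perfect matching of $G$ exactly when $G[U]$ has a perfect matching, assume for contradiction it does not. By Tutte's theorem some $T\subseteq U$ has $o(G[U]-T)>|T|$, where $o(\cdot)$ counts odd components, and as $|U|$ is even this gives $o(G[U]-T)\ge|T|+2$. If $G$ has an edge $e$ with both ends in $U$ --- inside a component of $G[U]-T$, inside $T$, or between a component and $T$ --- then $F\cup\{e\}$ is a matching of size $k$ for which the same set $T$, after discarding the at most two of its vertices lying on $e$, still witnesses via Tutte's theorem that $G-V(F\cup\{e\})$ has no perfect matching; so $F\cup\{e\}$ extends to none, contradicting $k$-extendability. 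The only remaining possibility is that $U$ is independent.

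So suppose $U$ is independent and fix distinct $x_1,x_2\in U$; then $N(x_1),N(x_2)\subseteq V(F)$ and, by the minimum degree bound, $|N(x_1)|+|N(x_2)|\ge 2(k+1)>2k-2=|V(F)|$. Hence some $F$-edge $ab$ has $a\in N(x_1)$ and $b\in N(x_2)$: otherwise every $q\in Q:=N(x_1)\cap N(x_2)$ would be $F$-matched outside $N(x_1)\cup N(x_2)$ (the $F$-edge at such a $q$ would otherwise be such an $ab$), giving $|Q|\le|V(F)|-|N(x_1)\cup N(x_2)|=|V(F)|-|N(x_1)|-|N(x_2)|+|Q|$, i.e.\ $|N(x_1)|+|N(x_2)|\le|V(F)|$, absurd. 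Then $M^\ast:=(F\setminus\{ab\})\cup\{ax_1,bx_2\}$ is a matching of size $k$ with uncovered set $U\setminus\{x_1,x_2\}$, an independent set of size $\ge 2$; so $G-V(M^\ast)$ has no perfect matching, $M^\ast$ extends to none, and we contradict $k$-extendability once more. Thus $G[U]$ has a perfect matching $N$, and $F\cup N$ is a perfect matching of $G$ containing $F$. I expect the real obstacle to lie in the connectivity half --- in particular in its small, degenerate configurations, where the parity of $H_1\setminus V(M^\ast)$ resists the naive adjustment and the minimum degree bound has to be brought in --- while the extendability half is, given that bound, a clean application of Tutte's theorem.
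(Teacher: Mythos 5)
The paper itself does not prove this statement (it is quoted from Plummer), so I can only judge your argument on its own terms. Your architecture --- first a minimum-degree bound, then both halves from it --- is reasonable, and the $(k-1)$-extendability half is essentially complete once $\delta(G)\ge k+1$ is available (in fact $\delta(G)\ge k$ already suffices there, and your Tutte-set detour in the case that $G[U]$ contains an edge $e$ is unnecessary: $F\cup\{e\}$ has size $k$, so it extends to a perfect matching, which already contains $F$). The genuine gap sits at the base, in the minimum-degree lemma that everything else leans on. In your reroute, after deleting the $P$-edge $uv$ with $u=P(v)$, the uncovered neighbour of $v$ is $u$; to cover it you must use an edge $uw$ with $w\ne v$ and delete the $P$-edge at $w$, which is only harmless if $P(w)\notin N(v)$. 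Nothing you say excludes that $u$ has no neighbour other than $v$, or that every neighbour $w\ne v$ of $u$ has $P(w)\in N(v)$; in either case the reroute fails or unsaturates $N(v)$. Since you may not assume any degree or connectivity information at this point (that is exactly what is being proved), these cases need a real argument --- an alternating-path/Tutte-type analysis, or a restructuring in which $(k-1)$-extendability is proved without the degree bound and connectivity is obtained by induction on $k$.

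The connectivity half has the same character: the reduction to ``find a matching $M^\ast$ of size exactly $k$ saturating $S$ with $|H_1\setminus V(M^\ast)|$ odd'' and the ensuing parity contradiction are correct, but the existence of $M^\ast$ is the actual content of the theorem, and your ``short case check'' does not establish it. Padding with edges inside $H_1$ or $H_2$ never changes the parity of the uncovered part of $H_1$, so the parity must be arranged entirely by the $S$-saturating core; starting from the $P$-edges meeting $S$, the uncovered part of $H_1$ is automatically even, and a single reroute of one $S$-vertex across the cut can fail for reasons that have nothing to do with $H_1$ being tiny: the vertex you wish to reroute may be $P$-matched inside $S$ (so rerouting unsaturates its partner), all of its neighbours on the far side may already be covered by the core matching, and, since you did not take $S$ inclusion-minimal or minimum, an $S$-vertex need not have any neighbour in $H_1$ at all. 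Handling these collisions, and checking that the modified core can still be padded to size exactly $k$, is where the real work of Plummer's theorem lies; as written, that step is asserted rather than proven.
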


For a cubic, 3-connected, bipartite graph $G$ being a brace is actually equivalent to being cyclically 4-connected \cite{GorskySW2023Matching}\footnote{A graph $G$ is called \emph{cyclically 4-connected} if for each cut $F$ of order less than four, $G - F$ contains at most one component that is not a tree.} and in \cite{FunkJLS20032Factor} Funk et al.\ point out that \Cref{con:2FH} can in fact be reduced to cyclically 4-connected graphs via a result by Labbate \cite{Labbate20013cut}.
With some effort, one can use these results to show that \Cref{con:2FH} is in fact equivalent to the following.

\begin{conjecture}\label{con:main}
    The Heawood graph and $K_{3,3}$ are the only 2-factor Hamiltonian, cubic braces.
\end{conjecture}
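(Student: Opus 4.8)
One direction is routine: every $2$-factor of $K_{3,3}$ is a single $6$-cycle because its components are even cycles of length at least four spanning only six vertices, and the corresponding check for the Heawood graph is classical \cite{FunkJLS20032Factor}. So the work lies entirely in showing that \emph{no other} cubic brace is $2$-factor Hamiltonian, and the plan is to run this along the Pfaffian/non-Pfaffian dichotomy furnished by \Cref{thm:pfaffian} and Little's characterisation \cite{Little1975Characterization}.

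For the Pfaffian case I would reduce everything to the presence of a $4$-cycle. The first step is to prove that every Pfaffian brace other than the Heawood graph contains a cycle of length four; by \Cref{thm:pfaffian} this splits into planar braces (a short structural/Euler-formula argument) and the inductive step, which should be easy because a trisum is glued \emph{along} a $4$-cycle, and a $4$-cycle of one of the three summands avoiding the gluing cycle survives. Granting this, let $G$ be a Pfaffian cubic brace that is $2$-factor Hamiltonian; if $G$ is not the Heawood graph, then $G$ contains a $4$-cycle $C=v_1u_1v_2u_2$, and since the only cubic brace on six vertices is $K_{3,3}$ (not Pfaffian) we may assume $|V(G)|\ge 8$. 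Writing $u_3,u_4$ for the third neighbours of $v_1,v_2$ and $v_3,v_4$ for those of $u_1,u_2$, a short case analysis on coincidences among these vertices, using only $2$-extendability and $3$-connectivity (\Cref{thm:extendabilitybasics}), rules out $u_3=u_4$: in each sub-case one exhibits a size-two matching that fails to extend, or finds that $\{v_1,v_2,v_3,u_1,u_2,u_3\}$ induces $K_{3,3}$ and exhausts $G$, or is separated from the rest by only two edges. Hence $\{v_1u_3,v_2u_4\}$ is a matching of size two, so by $2$-extendability it lies in a perfect matching $M$. Then $v_1,v_2$ are matched outside $C$, forcing all four edges of $C$ into the $2$-factor $G-M$; since $G$ is cubic, $u_1$ is then matched to $v_3$ and $u_2$ to $v_4$ in $M$ (in particular $v_3\neq v_4$, else $M$ could not exist, contradicting $2$-extendability). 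Thus $C$ is a connected component of $G-M$, contradicting $|V(G)|>4$. Consequently the Heawood graph is the only Pfaffian cubic brace that is $2$-factor Hamiltonian, and all that remains is the non-Pfaffian case.

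The non-Pfaffian case is where I expect the real obstacle, and it is the reason the full statement is only conjectured. By Little's characterisation, a non-Pfaffian cubic brace $G$ contains a bisubdivision $H$ of $K_{3,3}$ with $G-V(H)$ having a perfect matching; the six branch vertices already have degree three in $G$, so if $V(H)=V(G)$ then no subdivision path has an internal vertex, $H=K_{3,3}$, and as $G$ is cubic this forces $G=K_{3,3}$, the permitted exception. Otherwise $R:=V(G)\setminus V(H)$ is nonempty and carries a perfect matching $M_R$, and the plan would be to combine $M_R$ with a carefully chosen perfect matching $M_H$ of the conformal subgraph $H$ so that $G-(M_H\cup M_R)$ splits off a bounded-length subcycle of $H$ together with whatever it picks up from the edges leaving the internal vertices of the subdivision paths. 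Making this work demands tight control over how $H$ meets $R$ and over those extra edges; the tools available are cyclic $4$-connectivity of $G$ and a minimal (in $|V(H)|$) choice of the bisubdivision so that no path can be shortened using a chord inside $H$, but unlike \Cref{thm:pfaffian} there is no generative description of non-Pfaffian braces to exploit, so converting Little's forbidden substructure into a global obstruction to $2$-factor Hamiltonicity is precisely the gap that remains open. A sensible intermediate target is to settle the case in which the bisubdivision can be taken induced, and then to reduce the general case to that one.
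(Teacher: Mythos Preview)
Your proposal correctly recognises that this statement is a conjecture in the paper, not a theorem: only the Pfaffian half is proved (\Cref{cor:main}), and your candid assessment that the non-Pfaffian case is ``precisely the gap that remains open'' is accurate. Your sketch via Little's theorem for that case is a plausible line of attack, but, as you say, no one has closed it; the paper makes no attempt there either, merely recording the residual conjecture.

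For the Pfaffian half your approach matches the paper's. Both argue that every Pfaffian brace other than the Heawood graph contains a $4$-cycle, via an Euler-formula count for planar braces (\Cref{lem:planar4cycles}) and an induction through the trisum decomposition for non-planar ones (\Cref{lem:nonplanar4cycles}). The paper is a little more careful at the inductive step than your sketch: since the trisum may delete edges of the gluing $4$-cycle, one needs a $4$-cycle in some summand that is \emph{edge-disjoint} from the gluing cycle, and the paper secures this separately (\Cref{lem:edgedisjoint4cycles}, \Cref{lem:nonplanar4cycles}). The one genuine difference is how the contradiction is extracted from the $4$-cycle: the paper simply cites the known fact that $2$-factor Hamiltonian cubic bipartite graphs have girth at least~$6$ \cite{Labbate2002Characterizing,FunkJLS20032Factor}, whereas you give a direct $2$-extendability argument (extend $\{v_1u_3,\,v_2u_4\}$ to a perfect matching $M$ and observe that the $4$-cycle becomes a component of the $2$-factor $G-M$). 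Your route is more self-contained and essentially reproves the cited girth bound in the brace setting; the case analysis ruling out $u_3=u_4$ deserves a touch more detail than you give it, but it goes through.
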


The path to proving the correspondence between \Cref{con:2FH} and \Cref{con:main} is complicated by the fact that in \cite{Labbate20013cut} Labbate uses a notion that is distinct from 2-factor Hamiltonicity, but happens to coincide with it in this particular setting and cyclic 4-connectivity is never explicitly mentioned.
Beyond this, the result in \cite{GorskySW2023Matching} requires 3-connectivity, which is also not explicitly mentioned when Funk et al.\ discuss the result from \cite{Labbate20013cut} in \cite{FunkJLS20032Factor}.
Thus we provide a short proof of this reduction using basic matching theory in \Cref{sec:braces}.

To further reduce \Cref{con:2FH}, we note that it is also known that \Cref{con:2FH} can be restricted to graphs of girth at least 6 \cite{Labbate2002Characterizing,FunkJLS20032Factor}, where the \emph{girth} of a graph is the length of a shortest cycle within it. 
We show the following in \Cref{sec:pfaffian} and thus eliminate Pfaffian graphs as possible counterexamples to \Cref{con:2FH}.

\begin{theorem}\label{thm:main}
    All Pfaffian braces other than the Heawood graph have girth 4.
\end{theorem}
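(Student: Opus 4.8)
The plan is to invoke the structure theorem for Pfaffian braces, \Cref{thm:pfaffian}: a Pfaffian brace other than the Heawood graph is either a planar brace or is built from planar braces by repeatedly forming trisums, and moreover the graphs $G_1, G_2, G_3$ appearing in any trisum in such a construction are themselves braces. I will show that in both cases the graph contains a $4$-cycle; since a brace is connected, bipartite, and always contains a cycle, this is the same as saying it has girth $4$.

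The planar case follows from Euler's formula. Let $G$ be a planar brace. If $G = C_4$ there is nothing to prove, so assume $|V(G)| \geq 6$; then $G$ is $2$-extendable and hence, by \Cref{thm:extendabilitybasics}, $3$-connected, so every vertex has degree at least $3$ and thus $3|V(G)| \leq 2|E(G)|$. Suppose for contradiction that $G$ has no $4$-cycle; being bipartite, $G$ then has girth at least $6$. Fix a plane embedding of $G$. As $G$ is $2$-connected, each of its $|F|$ faces is bounded by a cycle, hence by at least six edges, while each edge lies on exactly two faces, so $6|F| \leq 2|E(G)|$, that is $3|F| \leq |E(G)|$. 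Multiplying Euler's formula $|V(G)| - |E(G)| + |F| = 2$ by $3$ and substituting then gives $6 = 3|V(G)| - 3|E(G)| + 3|F| \leq 3|V(G)| - 3|E(G)| + |E(G)| = 3|V(G)| - 2|E(G)| \leq 0$, a contradiction. Hence every planar brace contains a $4$-cycle.

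For the trisum case, write $G = (G_1 \cup G_2 \cup G_3) - S$ for a trisum of braces at the $4$-cycle $C$, where $S \subseteq E(C)$. The key reduction is the observation that the subgraph of $G$ induced on $V(G_1)$ equals $G_1 - S$: an edge of $G$ with both ends in $V(G_1)$ either has an end in $V(G_1) \setminus V(C)$, forcing it to be an edge of $G_1$ since that end lies in no other $G_i$, or it has both ends in $V(C)$, in which case it is an edge of $C$ — the bipartite graph on the four vertices of a $4$-cycle being that $4$-cycle — and hence an edge of $G_1$. Since $S \subseteq E(C)$ we get $G_1 - E(C) \subseteq G_1 - S \subseteq G$, so it suffices to exhibit a $4$-cycle in $G_1 - E(C)$. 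As $V(G_1) \supsetneq V(C)$ and $G_1$ is either planar or a trisum, it is neither $C_4$ nor $K_{3,3}$, hence has at least eight vertices. Thus, using \Cref{thm:pfaffian} and the planar case, \Cref{thm:main} reduces to the following: \emph{if $B$ is a brace on at least eight vertices and $C$ is a $4$-cycle of $B$, then $B - E(C)$ contains a $4$-cycle.}

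To prove this lemma I would exploit the connectivity and extendability that the brace condition guarantees: $B$ is $3$-connected and $2$-extendable, so it has minimum degree $3$ and its same-side vertices are forced to share neighbours. Writing $C = v_1 v_2 v_3 v_4$ with $v_1, v_3$ on one side of the bipartition, the rough strategy is to produce a $4$-cycle disjoint from $E(C)$ by finding two common neighbours of $v_1$ and $v_3$, or of $v_2$ and $v_4$, outside $V(C)$ — which yields such a $4$-cycle at once — and to argue that if no such pair exists then $B$ nonetheless contains a $4$-cycle meeting $C$ in at most one edge, after which a short case analysis on its position relative to $E(C)$ finishes the argument. When $B$ is planar one can instead argue by a face count: a plane brace has at least six quadrilateral faces, and one checks — distinguishing whether $C$ bounds a face, and in the contrary case examining one side of the separating cycle $C$ — that some quadrilateral face avoids all four edges of $C$. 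I expect the main obstacle to be the general, non-planar case of the lemma, namely excluding the possibility that every $4$-cycle of $B$ passes through the four edges of $C$; the decisive input should be that the defining condition of braces forces so many common neighbours among same-side vertices that the $4$-cycles of $B$ cannot all be funnelled through a single quadrilateral, which I would try to make quantitative by comparing the number of $4$-cycles through each edge of $C$ with the total number of $4$-cycles of $B$.
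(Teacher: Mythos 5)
Your planar case and your reduction of the trisum case are sound: a $4$-cycle of $G_1 - E(C)$ survives in the trisum because only edges of $C$ are deleted, so everything hinges on the statement you isolate at the end, namely that for every brace $B$ on at least eight vertices and every $4$-cycle $C$ of $B$ the graph $B - E(C)$ again contains a $4$-cycle. This is precisely where the proposal has a genuine gap: you do not prove this lemma, you only sketch hopes for it ("find two common neighbours \dots", "compare the number of $4$-cycles through each edge of $C$ with the total number"), and you acknowledge yourself that the non-planar case is open. Nothing in the brace axioms obviously forces same-side vertices to have two common neighbours outside $V(C)$, and the counting you propose has no handle in the non-planar setting (there is no analogue of "each edge lies on only two faces"). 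Note also that the property genuinely fails for $K_{3,3}$, so any proof must be quantitatively careful rather than a soft "braces have many common neighbours" argument; as stated for \emph{all} braces your lemma is a much stronger claim than what is needed, and it is not at all clear it is true. Even your planar subcase of the lemma is incomplete: when $C$ is a separating $4$-cycle, "examining one side" only helps if you know that this side together with $C$ is again a brace, which is exactly McCuaig's splitting lemma (\Cref{lem:4cyclesplit}); a bare Euler count on one side fails because vertices of $C$ may have degree $2$ there.

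The paper avoids proving your general lemma by keeping the Pfaffian structure that your reduction throws away. It proves the edge-disjoint-$4$-cycle property only for Pfaffian braces: for planar braces via the six quadrilateral faces of \Cref{lem:planar4cycles} together with \Cref{lem:4cyclesplit} for separating $4$-cycles (\Cref{lem:edgedisjoint4cycles}), and for non-planar Pfaffian braces by induction along the trisum decomposition of \Cref{thm:pfaffian}, with a short case analysis on how an arbitrary $4$-cycle $H$ meets the central $4$-cycle of that decomposition (\Cref{lem:nonplanar4cycles}). If you restrict your lemma to Pfaffian braces and prove it by this induction, you recover the paper's argument; as written, the decisive step of your proof is missing.
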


\begin{corollary}\label{cor:main}
    The Heawood graph is the only Pfaffian, 2-factor Hamiltonian, cubic brace.
\end{corollary}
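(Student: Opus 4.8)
The plan is to combine \Cref{thm:main} with the known reduction of \Cref{con:2FH} to girth at least $6$. I would begin by recording that the Heawood graph genuinely is a Pfaffian, $2$-factor Hamiltonian, cubic brace: it is cubic and bipartite, it is one of the two starting graphs of Diwan's construction \cite{Diwan2002Disconnected} and hence $2$-factor Hamiltonian, it is a brace (a fact recalled in \Cref{sec:intro}), and it is Pfaffian by \Cref{thm:pfaffian}. So the Heawood graph does belong to the class in question, and the whole content of the corollary is that it is the only member.

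For the other direction, let $G$ be a Pfaffian, $2$-factor Hamiltonian, cubic brace and suppose, towards a contradiction, that $G$ is not the Heawood graph. Then \Cref{thm:main} applies and shows that $G$ has girth $4$. Now I invoke the reduction of \Cref{con:2FH} to girth at least $6$ from \cite{Labbate2002Characterizing,FunkJLS20032Factor}: every cubic, bipartite, $2$-factor Hamiltonian graph of girth $4$ is obtained from copies of $K_{3,3}$ (and, in principle, the Heawood graph, which however has girth $6$) by repeated application of the star product, so such a graph either is $K_{3,3}$ or is a non-trivial star product $(G_1,v_1)*(G_2,v_2)$. The key point is that a non-trivial star product is never a brace: writing $X$ for the odd-sized, at-least-five-element vertex set contributed by one of the two factors, the principal $3$-edge cut $\Cut{X}$ is a non-trivial cut, and it is tight, since $G[X]$ has an odd number of vertices and hence no perfect matching of $G$ can meet all three edges of $\Cut{X}$, forcing $|\Cut{X}\cap M|=1$ for every perfect matching $M$. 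Consequently $G$, being a brace of girth $4$, must equal $K_{3,3}$.

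But $K_{3,3}$ is the canonical non-Pfaffian graph, contradicting the assumption that $G$ is Pfaffian. Hence no such $G$ exists, and the Heawood graph is the only Pfaffian, $2$-factor Hamiltonian, cubic brace.

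The step I expect to require the most care is the second one, specifically reconciling the literature with the statement used here: the reduction to girth at least $6$ is phrased in \cite{Labbate2002Characterizing,FunkJLS20032Factor} in terms of the star product construction rather than in terms of braces, so one has to make explicit that a girth-$4$ cubic bipartite $2$-factor Hamiltonian \emph{brace} is forced to be $K_{3,3}$; if the cited reduction is in fact already stated for cyclically $4$-connected, equivalently brace-like, graphs, this step becomes immediate. Everything else is routine, and — unlike \Cref{con:main} itself — the corollary does not actually need the equivalence of \Cref{con:2FH} and \Cref{con:main} established in \Cref{sec:braces}, only the girth reduction and \Cref{thm:main}.
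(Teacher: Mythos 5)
Your proposal follows essentially the same route the paper intends for this corollary: combine \Cref{thm:main} with the known reduction of \Cref{con:2FH} to girth at least~6, note that a girth-4, 2-factor Hamiltonian, cubic \emph{brace} would have to be $K_{3,3}$ (since a non-trivial star product is never a brace), and conclude from the non-Pfaffianity of $K_{3,3}$. One local justification is a non sequitur, however: you claim the principal 3-edge cut $\Cut{X}$ is tight ``since $G[X]$ has an odd number of vertices and hence no perfect matching of $G$ can meet all three edges of $\Cut{X}$''. Odd $|X|$ only forces $|\Cut{X} \cap M|$ to be odd, i.e.\ equal to $1$ or $3$; it does not by itself exclude $3$. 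To exclude $3$ you need the bipartite structure: the three endpoints of $\Cut{X}$ inside $X$ all lie in the same colour class (they are the former neighbours of the deleted vertex), and then the counting argument of \Cref{lem:biptechnical} yields $|\Cut{X} \cap M| = 1$ for every perfect matching; alternatively, invoke \Cref{lem:3con} together with \Cref{lem:starproductmatching} and \Cref{lem:tightcutshape}, exactly as done in \Cref{sec:braces}. With that one-line repair your argument is correct and coincides with the paper's derivation.
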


We note that \Cref{thm:main} stands in stark contrast to results by Gvozdjak and Ne\v{s}et\v{r}il \cite{GvozdjakN1996High} who were able to construct a bipartite, $k$-extendable graph $G_{n,k}$ with girth at least $n$ for any given positive integers $k$ and $n$.\footnote{See the third remark in the conclusion of \cite{GvozdjakN1996High} for the fact that these graphs are bipartite.}
Additionally, the main result in \cite{Diwan2002Disconnected} implies that no planar, Hamiltonian graph can be 2-factor Hamiltonian.
Since all planar graphs with perfect matchings are Pfaffian \cite{Kasteleyn1963Dimer}, \Cref{cor:main} thus further extends this result as well.

Ultimately, \Cref{con:2FH} can be narrowed down to the following, drawing from Lemma 3.3 in \cite{FunkJLS20032Factor} and the fact that $K_{3,3}$ has girth 4.

\begin{conjecture}
    All non-Pfaffian, cubic braces $G$ of girth at least 6, with $|V(G)| \equiv 2 \pmod{4}$, are not 2-factor Hamiltonian.
\end{conjecture}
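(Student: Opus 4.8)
The plan is to prove the contrapositive: every non-Pfaffian cubic brace $G$ of girth at least 6 is not 2-factor Hamiltonian. We first recast the goal. Since $G$ is cubic and bipartite, edge-complementation is a bijection between the perfect matchings of $G$ and its 2-factors, so $G$ is 2-factor Hamiltonian exactly when $G - M$ is connected for every perfect matching $M$. One checks readily that $G$ admits a perfect matching $M$ with $G - M$ disconnected if and only if $G$ has an edge cut $\partial(X)$, with $\emptyset \neq X \subsetneq V(G)$, that is contained in a perfect matching; such a cut is necessarily a matching, and since braces on more than four vertices are 3-connected (\Cref{thm:extendabilitybasics}) a cubic bipartite brace is cyclically 4-connected by the equivalence of \cite{GorskySW2023Matching}, whence every edge cut of size at most 3 is a vertex star and in particular not a matching. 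The task therefore becomes: \emph{find a matching-cut of $G$ of size at least 4 that extends to a perfect matching.} Together with \Cref{cor:main}, which settles the Pfaffian case, and the reductions recalled in the introduction, this would complete \Cref{con:2FH}.

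The cut is to be read off from the non-Pfaffian structure. By Little's theorem \cite{Little1975Characterization}, $G$ contains a bisubdivision $H$ of $K_{3,3}$ with $G - V(H)$ having a perfect matching $M_0$; choosing $H$ minimal we may take its branch paths chordless, and $H$ itself has a perfect matching $M_H$, so $M_0 \cup M_H$ is a perfect matching of $G$. Here the girth hypothesis does real work: each 4-cycle of $K_{3,3}$ lifts to a cycle of length at least 6, so $H$ is a \emph{genuinely} subdivided copy of $K_{3,3}$ --- which is unavoidable, since $K_{3,3}$ is itself 2-factor Hamiltonian and has no disconnected 2-factor, so no argument from an unsubdivided copy could work. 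Now fix a Hamiltonian 6-cycle $Q$ of $K_{3,3}$, whose three missing edges are the long diagonals of the hexagon joining antipodal vertices; lifting, we obtain a cycle $Q'$ in $G$ together with three internally disjoint paths $P_1, P_2, P_3$ joining the three antipodal pairs of branch vertices on $Q'$ and otherwise disjoint from $Q'$.

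The cut is then built by re-routing the matching near $Q'$. One seeks a vertex set $X$, assembled from $V(Q')$ and possibly parts of the $P_i$ or of $G - V(H)$, such that $\partial(X)$ is a matching of size at least 4 contained in a perfect matching $M$: concretely, $M$ should use the alternate-edge class of the cycle $Q'$ rather than the one meeting the rest of the graph, every edge of $G$ leaving $X$ should lie in $M$, and the rest of $G$ should still carry a perfect matching agreeing with $M$ off $X$. If so, $G - M$ has no edge across $\partial(X)$ and is disconnected. The parities at play --- the odd lengths of the branch paths of $H$ together with the congruence $|V(G)| \equiv 2 \pmod{4}$ inherited from the reductions --- are exactly what forces the partial matchings on the two sides of $\partial(X)$ to complete to perfect matchings; when $V(H) = V(G)$ this reduces to a finite analysis over the subdivision patterns of $K_{3,3}$, and when $V(H) \subsetneq V(G)$ one additionally uses $M_0$ and the brace structure of the pieces of $G$ attached to $H$.

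The main obstacle is exactly this last step: an arbitrary conformal bisubdivision of $K_{3,3}$ need not sit inside $G$ so that all the edges one wants in $M$ --- in particular the third edges at the vertices of $Q'$, and any chords of $Q'$ arising from paths $P_i$ of length one --- can be forced into a single perfect matching while $\partial(X)$ stays a matching, since these edges may be mutually incident or may conflict with the parity of the pieces they enter. Circumventing this will presumably require choosing $H$ and $Q$ with additional extremal properties --- minimising $|V(H)|$, making $G - V(H)$ connected, or arranging that $Q'$ is an induced cycle --- and then using cyclic 4-connectivity and girth at least 6 to rule out the constraints being inconsistent; alternatively, one could replace Little's theorem by the digraph formulation of bipartite non-Pfaffianness (the associated orientation has no directed cycle of even length, \cite{RobertsonST1999Permanents}) and convert a shortest even directed cycle directly into such a cut. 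Two further routes worth attempting are: proving that every non-Pfaffian cubic brace of girth at least 6, with one explicit exception, fails to be pseudo 2-factor isomorphic, and then invoking the structure of pseudo 2-factor isomorphic cubic bipartite graphs; or an induction along McCuaig's brace generation theorem. Each of these, however, carries its own cost, as the pseudo 2-factor isomorphic classification for cubic graphs is not fully resolved and brace generation does not preserve cubicity.
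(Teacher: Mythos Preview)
The statement you are addressing is presented in the paper as an \emph{open conjecture}, not as a theorem; the paper offers no proof of it. Indeed, by the reductions the paper establishes (equivalence of \Cref{con:2FH} and \Cref{con:main}, the girth-$6$ reduction from \cite{Labbate2002Characterizing,FunkJLS20032Factor}, the congruence condition from Lemma~3.3 of \cite{FunkJLS20032Factor}, and the paper's own \Cref{cor:main} handling the Pfaffian case), this conjecture is \emph{equivalent} to the full 2-factor Hamiltonicity Conjecture. A proof would therefore resolve \Cref{con:2FH} entirely, and none is known.

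Your write-up is not a proof but a research programme, and you say so yourself. The reformulation in your first paragraph is correct: in a cubic bipartite graph, failure of 2-factor Hamiltonicity is equivalent to the existence of a matching edge-cut of size at least four that extends to a perfect matching, and the cyclic 4-connectivity of cubic braces disposes of smaller cuts. Invoking Little's theorem to obtain a conformal bisubdivision $H$ of $K_{3,3}$ and trying to carve the cut out of a lifted 6-cycle $Q'$ is a natural line of attack. But the decisive step---showing that one can always choose $X$ so that $\partial(X)$ is a matching \emph{and} extends to a perfect matching of $G$---is never carried out. You explicitly flag this as ``the main obstacle'' and then enumerate possible detours (extremal choices of $H$, the even-dicycle reformulation, pseudo 2-factor isomorphic structure, brace generation) without executing any of them. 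That is the genuine gap: nothing in the argument forces the third edges at the vertices of $Q'$, together with whatever chords or attachment edges arise, to be simultaneously extendable, and the 2-extendability of a brace only gives you two prescribed edges, not the four or more you need. Absent that step, what you have is a heuristic for where to look, not a proof.

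A minor side remark: your parenthetical on the digraph formulation is inverted. The equivalence in \cite{RobertsonST1999Permanents,McCuaig2004Polyas} is that a bipartite graph is \emph{non}-Pfaffian precisely when the associated $M$-direction \emph{does} contain an even directed closed walk (equivalently, an even dicycle), not that it has none.
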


\section{Reduction to braces}\label{sec:braces}

Here we give a proof of the fact that \Cref{con:2FH} can be reduced to braces that is largely independent of the work in \cite{Labbate20013cut}, \cite{FunkJLS20032Factor}, and \cite{GorskySW2023Matching}, with the exception of the following simple result from \cite{FunkJLS20032Factor} (see Lemma 3.3).

\begin{lemma}[Funk, Jackson, Labbate, and Sheehan \cite{FunkJLS20032Factor}]\label{lem:3con}
    All 2-factor Hamiltonian, cubic, bipartite graphs are 3-connected.
\end{lemma}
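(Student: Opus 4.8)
\emph{Proof idea.} The plan is to show that $G$ has no edge cut of size at most $2$ and then invoke the well-known fact that for cubic graphs this is equivalent to $3$-connectivity. First, since $G$ is cubic and bipartite it is properly $3$-edge-colourable (K\"onig), so each colour class is a perfect matching; in particular $G$ has a perfect matching $M$, and $E(G)\setminus M$ is a $2$-factor. By hypothesis this $2$-factor is a Hamiltonian cycle, so $G$ is $2$-connected and in particular has no bridge. It remains to exclude edge cuts of size exactly $2$; and if $G$ is $2$-connected, cubic and not $3$-connected, a short case analysis on a $2$-vertex-cut produces such an edge cut.

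So assume $\Cut{X}=\{e_1,e_2\}$ with $e_i=x_iy_i$, $x_i\in X$, $y_i\in\Compl{X}$. A degree count gives $3|X|=2|E(G[X])|+2$, so $|X|$ is even; hence $|X|\ge 2$, and symmetrically $|\Compl{X}|\ge 2$. Next, $x_1\ne x_2$: otherwise the common vertex would have both cut edges incident to it and (as $|X|\ge 2$) at least one further edge inside $X$, making it a cut vertex of $G$ --- impossible. Symmetrically $y_1\ne y_2$. Writing $(A,B)$ for the bipartition of $G$ and counting $|E(G[X])|$ from each side gives $|E(G[X])|=3|X\cap A|-k_A=3|X\cap B|-k_B$, where $k_A$ (resp.\ $k_B$) counts the cut edges whose $X$-endpoint lies in $A$ (resp.\ $B$) and $k_A+k_B=2$. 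Thus $3(|X\cap A|-|X\cap B|)=k_A-k_B\in\{-2,0,2\}$ is a multiple of $3$, forcing $k_A=k_B=1$. Hence, after relabelling, $x_1\in A$ and $x_2\in B$, and therefore $y_1\in B$ and $y_2\in A$.

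The point of the previous paragraph is that $G_1:=G[X]+x_1x_2$ and $G_2:=G[\Compl{X}]+y_1y_2$ are both cubic (the added edge restores the degree lost by each cut-edge endpoint) and, crucially, still bipartite. Being cubic and bipartite, $G_1$ and $G_2$ are $3$-edge-colourable, so each contains a perfect matching through its added edge, say $M_1\ni x_1x_2$ and $M_2\ni y_1y_2$. Then $M:=(M_1\setminus\{x_1x_2\})\cup(M_2\setminus\{y_1y_2\})\cup\{e_1,e_2\}$ is a perfect matching of $G$ with $e_1,e_2\in M$. Consequently $E(G)\setminus M$ is a $2$-factor of $G$ using no edge of $\Cut{X}$; being spanning it meets both $X$ and $\Compl{X}$, so it is disconnected --- contradicting that every $2$-factor of $G$ is a Hamiltonian cycle. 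Hence $G$ has no edge cut of size $2$, and is $3$-connected.

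I expect the delicate step to be the parity bookkeeping that pins down the bipartition classes of the four cut-edge endpoints: this is exactly what keeps $G_1$ and $G_2$ bipartite, and hence what lets us find perfect matchings through their added edges and realise $\{e_1,e_2\}$ inside a single perfect matching of $G$. A minor additional point is the (standard) reduction from ``not $3$-connected'' to ``has a $2$-edge-cut'' for cubic graphs, together with disposing of the degenerate cases where one side of a cut is a single vertex.
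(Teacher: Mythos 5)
Your argument is correct, but note that the paper does not actually prove this lemma at all: it is imported verbatim as Lemma~3.3 of Funk, Jackson, Labbate, and Sheehan, so there is no in-paper proof to compare against. What you have written is a self-contained proof in the spirit of the matching-theoretic tools the paper does use elsewhere (K\"onig's factorisation, complementing a perfect matching to get a 2-factor, as in Lemma~\ref{lem:tightcut2FH} and Observation~\ref{obs:perfmat}): you first get 2-connectivity from a Hamiltonian 2-factor, then rule out a 2-edge-cut $\{e_1,e_2\}$ by forcing both of its edges into one perfect matching $M$, so that $E(G)\setminus M$ is a disconnected 2-factor. The parity bookkeeping that places the two $X$-endpoints in different colour classes is correct and is indeed the step that makes $G[X]+x_1x_2$ bipartite. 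Two small points you should make explicit: (i) the reduction ``not 3-connected $\Rightarrow$ 2-edge-cut'' is the standard fact that vertex- and edge-connectivity coincide for cubic graphs; you may simply cite it, but as stated it is only gestured at; and (ii) $x_1x_2$ (or $y_1y_2$) may already be an edge of $G$, in which case $G_1$ is a cubic bipartite \emph{multigraph}; this is harmless because K\"onig's decomposition into perfect matchings holds for bipartite multigraphs, but since the rest of the paper works with simple graphs you should say so (or argue around it) rather than pass over it silently. With those two remarks added, your proof stands as a valid elementary substitute for the citation.
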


The following old theorem by \Konig will also prove useful.

\begin{theorem}[\Konig \cite{Konig1916Ueber}]\label{thm:konig}
    For every positive integer $k$, the edges of every $k$-regular, bipartite graph can be partitioned into $k$ pairwise disjoint perfect matchings. 
\end{theorem}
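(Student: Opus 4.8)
The plan is to prove \Cref{thm:konig} by induction on $k$, the crucial ingredient being that every $k$-regular bipartite graph with $k \geq 1$ admits a perfect matching. The base case is immediate: when $k = 1$ the graph is itself a perfect matching (and $k = 0$ is vacuous, with the empty partition). So the whole content sits in the key lemma and one routine deletion step.

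For the key lemma, let $G$ be a $k$-regular bipartite graph with bipartition $(A,B)$ and $k \geq 1$. Counting edges incident with each side gives $k|A| = |E(G)| = k|B|$, hence $|A| = |B|$, so it suffices to find a matching saturating $A$. I would verify Hall's condition: fix $S \subseteq A$ and note that every edge incident with a vertex of $S$ has its other endpoint in $N_G(S)$; thus the number of edges between $S$ and $N_G(S)$ equals $k|S|$, while it is at most $k|N_G(S)|$ since each vertex of $N_G(S)$ meets at most $k$ edges. Dividing by $k$ yields $|S| \leq |N_G(S)|$, so Hall's marriage theorem gives a matching saturating $A$, which is perfect because $|A| = |B|$.

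For the inductive step, assume the result for $k-1$ and let $G$ be $k$-regular bipartite with $k \geq 2$. By the key lemma pick a perfect matching $M$ of $G$; deleting its edges yields a $(k-1)$-regular bipartite graph $G - M$, which by the induction hypothesis has its edge set partitioned into perfect matchings $M_1,\dots,M_{k-1}$. Since $E(G)$ is the disjoint union of $E(G-M)$ and $M$, the sets $M_1,\dots,M_{k-1},M$ are the desired partition of $E(G)$ into $k$ pairwise disjoint perfect matchings.

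There is essentially no obstacle here: the only non-bookkeeping step is the edge-count verifying Hall's condition, which is the short argument above. I would only remark that ``$k$-regular bipartite'' is understood to mean \emph{finite}, and that disconnectedness causes no trouble since regularity and Hall's condition are both preserved componentwise (or, equivalently, checked globally).
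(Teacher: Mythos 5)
Your proof is correct, and the paper offers no argument of its own to compare it with: \Cref{thm:konig} is quoted as a classical result of \Konig\ and used as a black box. Your route --- verify Hall's condition by the $k|S| \leq k|N_G(S)|$ edge count, extract a perfect matching, delete it, and induct on $k$ --- is the standard modern proof and is complete; the only remarks worth making are the ones you already make (finiteness, and that components can be handled separately), plus perhaps that the same statement can be phrased as ``every $k$-regular bipartite graph is $k$-edge-colourable,'' which is how \Konig\ originally presented it.
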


This immediately implies that all $k$-regular, bipartite, connected graphs are matching covered, allowing us to perform the tight cut decomposition procedure on them, and in particular, \Cref{thm:konig} also tells us that all of these graphs have 2-factors.
We first use this to reduce the problem of characterising 2-factor Hamiltonian, cubic, bipartite graphs to 3-connected graphs.

Concerning the tight cut decomposition procedure for 3-connected, cubic, bipartite graphs, McCuaig provides us with two very useful lemmas.
We call a matching $F$ in a graph $G$ \emph{induced}, if no two endpoints of distinct edges of $F$ are adjacent in $G$.

\begin{lemma}[McCuaig \cite{McCuaig2000Even}]\label{lem:tightcutshape}
    Let $G$ be a 3-connected, cubic, bipartite graph.
    A non-trivial cut in $G$ is an induced matching consisting of 3 edges if and only if it is tight.
\end{lemma}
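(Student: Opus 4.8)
My plan is to prove both directions by elementary vertex-counting, with a single appeal to \Cref{thm:konig}, so I would first record one bookkeeping identity. Fix the bipartition $(A,B)$ of $G$, and for $X\subseteq V(G)$ write $X_A=X\cap A$, $X_B=X\cap B$, and let $e(X)$ be the number of edges with both ends in $X$ (each joining $X_A$ to $X_B$, since $G$ is bipartite). Letting $a$ (resp.\ $b$) be the number of edges of $\Cut{X}$ whose endpoint in $X$ lies in $X_A$ (resp.\ $X_B$), counting edge-ends at $X_A$ and at $X_B$ gives $a=3|X_A|-e(X)$, $b=3|X_B|-e(X)$, and $a+b=|\Cut{X}|$, hence $|X_A|-|X_B|=(a-b)/3$. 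The consequence I will use twice: if $|\Cut{X}|=3$, then $a-b$ is a multiple of $3$ with $a+b=3$, so $\{a,b\}=\{0,3\}$; thus all three edges of $\Cut{X}$ have their $X$-endpoints on one side of the bipartition, say in $X_A$, and then $|X_A|=|X_B|+1$, so $|X|$ is odd.

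For the implication ``tight $\Rightarrow$ induced matching of three edges'', I would start from a non-trivial tight cut $\Cut{X}$ and first determine $|\Cut{X}|$: by \Cref{thm:konig} the edge set of the cubic bipartite graph $G$ decomposes into three perfect matchings, each of which meets the tight cut $\Cut{X}$ in exactly one edge, so $|\Cut{X}|=3$. Next I would show these three edges form a matching, using that $G$ is $3$-connected and hence $3$-edge-connected: if some $v\in X$ were incident with two of the cut edges it would have exactly one neighbour inside $X$ (it cannot be incident with all three, for then $X\setminus\{v\}$ would have no edge leaving it, contradicting connectedness unless $X=\{v\}$), and then $\Cut{X\setminus\{v\}}$ would be a $2$-edge cut separating the non-empty sets $X\setminus\{v\}$ and $\Compl{X}\cup\{v\}$, a contradiction; the case $v\in\Compl{X}$ is symmetric. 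Writing $\Cut{X}=\{x_1y_1,x_2y_2,x_3y_3\}$ with $x_i\in X$, the identity above places all $x_i$ on one side of the bipartition and all $y_i$ on the other, so no two $x_i$ and no two $y_i$ are adjacent, while any edge $x_iy_j$ with $i\neq j$ would be a fourth edge of $\Cut{X}$; hence $\Cut{X}$ is an induced matching.

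For the converse I would take a non-trivial cut $\Cut{X}=\{x_1y_1,x_2y_2,x_3y_3\}$ that is an induced matching, with $x_i\in X$; by the identity I may assume $x_1,x_2,x_3\in X_A$ and $|X|$ odd (and $|X_A|\geq 3$, since the three cut edges meet three distinct vertices of $X_A$). For any perfect matching $M$, the vertices of $X$ that $M$ matches outside $X$ are exactly the $X$-endpoints of $M\cap\Cut{X}$, so $|X|-|M\cap\Cut{X}|$ is even; hence $|M\cap\Cut{X}|$ is odd and lies in $\{1,3\}$. If it were $3$, then $M$ would match $x_1,x_2,x_3$ into $\Compl{X}$ and the remaining vertices of $X$ among themselves, which is impossible since the leftover sides $X_A\setminus\{x_1,x_2,x_3\}$ and $X_B$ have sizes differing by $2$. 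Therefore $|M\cap\Cut{X}|=1$ for every $M$, i.e.\ $\Cut{X}$ is tight.

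I expect the only genuine difficulty to be the beginning of the tight-to-structure direction: spotting that $|\Cut{X}|=3$ falls straight out of the $3$-edge-colouring of $G$, and then upgrading ``three edges'' to ``induced matching'' via $3$-edge-connectivity and bipartiteness. Given the identity of the first paragraph, the remaining counting and the parity argument for the converse are routine.
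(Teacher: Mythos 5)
Your proof is correct, and I could not find a gap in it: the degree-count identity $a-b=3(|X_A|-|X_B|)$ is right and does force all three $X$-endpoints of a $3$-edge cut into one colour class; the tight direction correctly gets $|\Cut{X}|=3$ from the three-matching decomposition of \Cref{thm:konig}, the exclusion of a vertex meeting two or three cut edges via the $2$-edge cut $\Cut{X\setminus\{v\}}$ (using $\kappa\le\lambda$, so $3$-connected implies $3$-edge-connected) is sound, and bipartiteness plus $|\Cut{X}|=3$ then upgrades the matching to an induced one; the converse parity argument ($|X|$ odd, $|M\cap\Cut{X}|\in\{1,3\}$, and $3$ ruled out because $|X_A|-|X_B|=1\neq 3$) is also fine. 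There is, however, nothing in the paper to compare it against: \Cref{lem:tightcutshape} is quoted from McCuaig \cite{McCuaig2000Even} without proof, so your write-up amounts to a short self-contained replacement for that citation, using only \Cref{thm:konig}, Whitney's inequality between vertex- and edge-connectivity, and elementary counting in the bipartition. One cosmetic remark: your converse never uses the \emph{induced} hypothesis, only that the cut is a matching of three edges; that is consistent with the lemma (in a cubic bipartite graph a $3$-edge cut that is a matching is automatically induced, by the same colour-class observation), but it is worth saying explicitly so the reader does not hunt for where inducedness enters.
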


\begin{lemma}[McCuaig \cite{McCuaig2000Even}]\label{lem:tightcutpreserve}
    Let $G$ be a 3-connected, cubic, bipartite graph with a non-trivial tight cut $\Cut{X}$.
    Then both tight cut contractions associated with $\Cut{X}$ are cubic, 3-connected, and bipartite.
\end{lemma}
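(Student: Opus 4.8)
The plan is to check the three conclusions — cubic, bipartite, and $3$-connected — separately for the contraction $G/X$, the case of $G/\Compl{X}$ being symmetric. I would begin from the structural description of the cut provided by \Cref{lem:tightcutshape}: write $\Cut{X} = \{e_1, e_2, e_3\}$ with $e_i = u_i w_i$, where $u_i \in X$ and $w_i \in \Compl{X}$; since $\Cut{X}$ is a matching, the $u_i$ are pairwise distinct and the $w_i$ are pairwise distinct, so in particular $|X|, |\Compl{X}| \ge 3$. The cubic claim is then immediate: in $G/X$ the contracted vertex $x$ is joined exactly to the three distinct vertices $w_1, w_2, w_3$, no edge of $\Cut{X}$ becomes a loop, and every vertex of $\Compl{X}$ keeps its three $G$-edges; so the deletion of parallel edges in the definition of the contraction does nothing here and $G/X$ is cubic.

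For bipartiteness I would bring in matching theory, which I expect to be the conceptual core of the argument. By \Cref{thm:konig} the graph $G$ is matching covered, so each $e_i$ lies in some perfect matching $M_i$. Tightness of $\Cut{X}$ forces $M_i \cap \Cut{X} = \{e_i\}$, and hence $M_i \cap E(G[X])$ is a perfect matching of $G[X] - u_i$. Writing $(A,B)$ for the bipartition of $G$, the mere existence of this matching pins $|X \cap A| - |X \cap B|$ to be $+1$ if $u_i \in A$ and $-1$ if $u_i \in B$; as this difference does not depend on $i$, all three $u_i$ — and therefore all three $w_i$ — lie in one colour class. Keeping the colours of $\Compl{X}$ inherited from $G$ and giving $x$ the colour opposite to that of the $w_i$ then yields a proper $2$-colouring of $G/X$, so $G/X$ is bipartite.

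For $3$-connectivity I would use two facts about cubic graphs: a cubic $3$-connected graph is $3$-edge-connected, and conversely a cubic $3$-edge-connected graph is $3$-connected (the latter by a short parity count on the sizes of the components of the graph minus a hypothetical cut of size at most two). It then suffices to verify that $G/X$ is $3$-edge-connected, which is routine: every cut of $G/X$ pulls back to a cut of $G$ of the same size — take the preimage of one of the two vertex sets defining it, replacing $x$ by $X$ — and the three edges of $\Cut{X}$ are never identified under the contraction because the $w_i$ are distinct, so the lower bound of $3$ transfers from $G$ to $G/X$. The same reasoning applies to $G/\Compl{X}$.

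The one step I expect to be genuinely delicate is bipartiteness: it is the only place where degree- and structure-counting do not suffice and one must use both matching-coveredness and the defining property of a tight cut, everything else being bookkeeping around \Cref{lem:tightcutshape}. A minor additional wrinkle is the appeal to ``vertex-connectivity equals edge-connectivity for cubic graphs''; should one prefer to avoid it, the $3$-connectivity of $G/X$ has to be argued directly, using also that $G[X]$ and $G[\Compl{X}]$ are connected (a component of either avoiding $\Cut{X}$ would, since $G$ is $3$-edge-connected, be a component of $G$). The awkward case is then a $2$-separator of $G/X$ containing the vertex $x$, where one notes that some component of $G[\Compl{X}]$ minus the other separating vertex meets at most one of $w_1, w_2, w_3$, which together with $|X| \ge 3$ produces a $2$-separator of $G$ itself.
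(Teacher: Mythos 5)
Your argument is correct, but it cannot be ``the same as the paper's'': the paper does not prove this statement at all, it imports it from McCuaig \cite{McCuaig2000Even} as a black box. What you have written is a complete, self-contained derivation from ingredients already present in the paper, and all three parts check out. The cubic part is right for exactly the reason you give: \Cref{lem:tightcutshape} makes $\Cut{X}$ a matching, so the $w_i$ are distinct, no loops or parallel edges are removed, and every vertex of $\Compl{X}$ keeps degree $3$. The bipartiteness part is indeed the only place where matching theory is genuinely needed, and your argument is the standard one: by \Cref{thm:konig} each $e_i$ lies in a perfect matching $M_i$, tightness gives $M_i \cap \Cut{X} = \{e_i\}$, so $M_i$ perfectly matches $X \setminus \{u_i\}$ inside $X$, which pins $|X \cap A| - |X \cap B|$ to $\pm 1$ according to the colour of $u_i$; since this difference is independent of $i$, all $u_i$ lie in one class, all $w_i$ in the other, and the contraction vertex can be coloured consistently. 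For $3$-connectivity, the pull-back of edge cuts is fine (no edges are lost in the contraction, and both sides of the preimage are non-empty), and the bridge between edge- and vertex-connectivity is the classical fact that $\kappa = \lambda$ for cubic graphs; you only sketch that fact, and the ``parity count'' you allude to is really a small case analysis on how the at most six edges leaving a hypothetical $2$-separator distribute over the components, but it is standard and true, and your fallback direct argument (using that $G[X]$ and $G[\Compl{X}]$ are connected because $G$ is $3$-edge-connected) also goes through. In short: the paper cites, you prove; your route costs a page of routine verification but buys independence from \cite{McCuaig2000Even} for this lemma, at the price of still leaning on \Cref{lem:tightcutshape} from the same source.
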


This lets us prove that tight cut contractions preserve 2-factor Hamiltonicity in our setting.

\begin{lemma}\label{lem:tightcut2FH}
    Let $G$ be a 3-connected, cubic, bipartite graph with a non-trivial tight cut $\Cut{X}$.
    Then $G$ is 2-factor Hamiltonian if and only if the two tight cut contractions associated with $\Cut{X}$ are 2-factor Hamiltonian.
\end{lemma}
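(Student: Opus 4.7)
The plan is to use \Cref{lem:tightcutshape}, which tells us that the non-trivial tight cut $\Cut{X}$ is an induced matching $\{e_1, e_2, e_3\}$, to set up a clean correspondence between the 2-factors of $G$ and compatible pairs of 2-factors in the two tight cut contractions — call them $G_1$ (obtained by contracting $\Compl{X}$ to a single vertex $v_{\Compl{X}}$) and $G_2$ (obtained by contracting $X$ to $v_X$). In a cubic graph the 2-factors are exactly the complements of perfect matchings, so tightness of $\Cut{X}$ forces $|F \cap \Cut{X}| = 2$ for every 2-factor $F$ of $G$. Likewise, any 2-factor of $G_1$, respectively $G_2$, uses exactly two of $e_1, e_2, e_3$ at $v_{\Compl{X}}$, respectively $v_X$. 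Writing $J := F \cap \Cut{X}$, one obtains the natural decomposition $F_1 := (F \cap E(G[X])) \cup J$ and $F_2 := (F \cap E(G[\Compl{X}])) \cup J$, and conversely any pair $(F_1, F_2)$ of 2-factors of $G_1, G_2$ with $F_1 \cap \Cut{X} = F_2 \cap \Cut{X} = J$ glues back to a 2-factor $F = F_1 \cup F_2$ of $G$.

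The core combinatorial step is a cycle count under this gluing. Since $J$ is contained in the induced matching $\Cut{X}$, the unique cycle of $F_1$ through $v_{\Compl{X}}$ uncontracts to a path in $X$ whose endpoints are the $X$-endpoints of the two edges of $J$, and dually the unique cycle of $F_2$ through $v_X$ becomes a path in $\Compl{X}$ with matching endpoints; these two paths close up through the two edges of $J$ into a single cycle of $F$. Every other cycle of $F_1$ lies inside $X$, and similarly for $F_2$ inside $\Compl{X}$, so these carry over to $G$ unchanged. Hence, if $F_1$ has $c_1$ cycles and $F_2$ has $c_2$ cycles, then $F$ has exactly $c_1 + c_2 - 1$ cycles.

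With this identity, the $(\Leftarrow)$ direction is immediate: any 2-factor $F$ of $G$ splits into 2-factors $F_1, F_2$ that are Hamiltonian by assumption, so $c_1 = c_2 = 1$ and $F$ is a single cycle. For the $(\Rightarrow)$ direction, given an arbitrary 2-factor $F_1$ of $G_1$ with $F_1 \cap \Cut{X} = J$, one needs to produce a companion 2-factor $F_2$ of $G_2$ with $F_2 \cap \Cut{X} = J$. By \Cref{lem:tightcutpreserve}, the graph $G_2$ is again cubic, 3-connected, and bipartite, so by \Cref{thm:konig} it is matching covered; in particular the unique edge $e_k \in \Cut{X} \setminus J$ lies in some perfect matching $M_2$ of $G_2$, and $F_2 := E(G_2) \setminus M_2$ is the desired 2-factor. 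Gluing, $F := F_1 \cup F_2$ is a 2-factor of $G$ and therefore a Hamiltonian cycle, whence $c_1 + c_2 - 1 = 1$ forces $c_1 = 1$. The same argument with the roles of $G_1$ and $G_2$ swapped handles the other contraction.

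The main obstacle I expect is the cycle-counting bookkeeping in the middle step: one has to verify carefully that, precisely because $F_1$ and $F_2$ agree on $\Cut{X}$ and $J$ is part of an induced matching of $G$, the two distinguished cycles combine into exactly one cycle of $G$ rather than two or more, and that no spurious cycle appears from among the edges of $J$ themselves. Once this accounting is set up, both directions of the equivalence follow by direct counting.
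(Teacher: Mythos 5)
Your proposal is correct and follows essentially the same route as the paper: it relies on the same key facts (\Cref{lem:tightcutshape}, \Cref{lem:tightcutpreserve}, and \Cref{thm:konig} to obtain a companion 2-factor of the other contraction through a prescribed pair of cut edges), and your splitting/gluing of 2-factors across $\Cut{X}$ is exactly the construction used in both directions of the paper's argument. The only cosmetic differences are that you derive $|F \cap \Cut{X}| = 2$ directly from tightness via the complement-of-a-perfect-matching observation (the paper uses a parity argument on the colour classes of $X$) and that you package both directions uniformly through the cycle-count identity $c_1 + c_2 - 1$ instead of exhibiting a disconnected 2-factor in each direction; both are sound.
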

\begin{proof}
    Let $G_1$ and $G_2$ be the two tight cut contractions associated with $\Cut{X}$, such that $X \cup \{ c_1 \} = V(G_1)$ and $\Compl{X} \cup \{ c_2 \} = V(G_2)$, meaning that $c_1$ and $c_2$ are the two contraction vertices.
    Using \Cref{lem:tightcutshape}, we let $\{ e, f, g \} = \Cut{X}$.
    For both $i \in [2]$, we let $e_i$ be the unique edge incident to $c_i$ in $G_i$ that shares an endpoint with $e$ and we define $f_i$ and $g_i$ analogously.

    Suppose that $G$ is 2-factor Hamiltonian but at least one of $G_1$ and $G_2$ is not 2-factor Hamiltonian.
    Both of these graphs contain 2-factors according to \Cref{thm:konig}, since they are 3-connected, cubic, and bipartite thanks to \Cref{lem:tightcutpreserve}. 
    Without loss of generality, we can therefore assume that $G_1$ contains a 2-factor $F$ that is not a Hamiltonian cycle.
    We may further assume, again without loss of generality, that $e_1,f_1 \in E(F)$ are the two edges in $F$ that are incident to $c_1$.
    Note that there exists a cycle $C$ in $F$ that does not use the edges $e_1,f_1$ and thus we have $C \subseteq G$.
    As observed earlier, \Cref{thm:konig} tells us that there exists a 2-factor $F'$ in $G_2$ using the edges $e_2,f_2$.
    We can now observe that $(F \cup F' \cup \{ e,f \}) - \{ c_1, c_2 \}$ is a 2-factor in $G$ that contains at least 2 components, contradicting the 2-factor Hamiltonicity of $G$.
    Thus both $G_1$ and $G_2$ must be 2-factor Hamiltonian.

    For the other direction, suppose that $G_1$ and $G_2$ are 2-factor Hamiltonian, but $G$ is not.
    First, observe that according to \Cref{lem:tightcutpreserve} and \Cref{thm:konig} the graph $G_1$ has a perfect matching and thus the vertex set of $G_1$ can be partitioned into two sets of equal size, such that all edges of $G_1$ have one endpoint in each of these two sets.
    Therefore no 2-factor of $G$ contains a 2-factor covering exactly the vertices of $G[X]$, as $V(G_1) = X \cup \{ c_1 \}$, and such a 2-factor of $X$ could be split into two perfect matchings of the vertices of $X$, according to \Cref{thm:konig}.
    An analogous argument shows that such a 2-factor cannot exist for $\Compl{X}$ either.
    Thus each 2-factor of $G$ contains exactly two edges of $\Cut{X}$, as this cut contains three edges.

    Suppose now that $F$ is a 2-factor of $G$ that is not a Hamiltonian cycle, which must exist, since $G$ must have a 2-factor and is not 2-factor Hamiltonian.
    Let $C$ be the component of $F$ that contains the two edges of $\Cut{X}$ and without loss of generality let $e,f \in E(C)$ be these two edges.
    Once more without loss of generality there exists a non-empty subgraph $H \subseteq \InducedG{G}{X}$ of $F$ such that $H \cup C$ contains all vertices of $X$.
    We note that the endpoints of $e$ and $f$ are not adjacent according to \Cref{lem:tightcutshape} and thus $C - \Compl{X}$ is a path of length at least two.
    Therefore, if we let $C'$ be the result of contracting the path $C - X$ within $C$ into the vertex $c_1$, then $C'$ is guaranteed to be a cycle.
    However, this implies that $H \cup C'$ is a 2-factor of $G_1$ that is not a Hamiltonian cycle, contradicting the 2-factor Hamiltonicity of $G_1$ and completing our proof.
\end{proof}

Thus to characterise 2-factor Hamiltonian, cubic, bipartite graphs it suffices to characterise 2-factor Hamiltonian, cubic braces.
However, this is not directly connected to \Cref{con:2FH} yet and we will have to also briefly analyse the star product before we can conclude that \Cref{con:2FH} can be reduced to braces.

\begin{lemma}\label{lem:starproductmatching}
    The principal 3-edge cut $F$ of the star product $(G_1,v_1) * (G_2,v_2)$ of two cubic, bipartite graphs is an induced matching consisting of 3 edges.
\end{lemma}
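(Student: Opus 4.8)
The plan is to separate the statement into its two claims. That $F$ is a matching of size three is immediate from the definition of the star product, so the real content is that $F$ is \emph{induced}, i.e.\ that no endpoint of one edge of $F$ is adjacent, in $G := (G_1,v_1) * (G_2,v_2)$, to an endpoint of a different edge of $F$. The key tool will be the bipartiteness of $G_1$ and $G_2$, which forces $N_{G_1}(v_1)$ and $N_{G_2}(v_2)$ to be independent sets, combined with a description of exactly which pairs of vertices can be adjacent in $G$.

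First I would fix notation: since $v_1$ and $v_2$ have degree three, write $N_{G_1}(v_1) = \{a_1,a_2,a_3\}$ and $N_{G_2}(v_2) = \{b_1,b_2,b_3\}$, and, after relabelling, assume $F = \{a_1b_1, a_2b_2, a_3b_3\}$ is the size-three matching inserted between the two neighbourhoods; in particular all six vertices $a_1,a_2,a_3,b_1,b_2,b_3$ are distinct and $F$ is a matching of size three. Next I would record the structural observation that every edge of $G$ is of exactly one of three types: (i) an edge of $G_1 - v_1$, hence lying inside $V(G_1)\setminus\{v_1\}$; (ii) an edge of $G_2 - v_2$, hence lying inside $V(G_2)\setminus\{v_2\}$; or (iii) one of the three edges of $F$, these being by definition the only edges joining $V(G_1)\setminus\{v_1\}$ to $V(G_2)\setminus\{v_2\}$.

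To finish, fix distinct $i,j \in [3]$; the endpoints of the distinct edges $a_ib_i$ and $a_jb_j$ are $a_i, b_i, a_j, b_j$, so I must show the pairs $\{a_i,a_j\}$, $\{b_i,b_j\}$, $\{a_i,b_j\}$, $\{b_i,a_j\}$ are all non-edges of $G$. For $\{a_i,a_j\}$: any edge between them would be of type (i), i.e.\ an edge of $G_1$, but $a_i,a_j \in N_{G_1}(v_1)$ lie in the colour class of the bipartition of $G_1$ not containing $v_1$, so they are non-adjacent in $G_1$; the pair $\{b_i,b_j\}$ is symmetric via type (ii). For $\{a_i,b_j\}$ (and likewise $\{b_i,a_j\}$): any such edge is of type (iii), hence lies in $F$, but the only edge of $F$ meeting $a_i$ is $a_ib_i$ and $i \neq j$. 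This exhausts all pairs of endpoints of distinct edges of $F$, so $F$ is induced.

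I do not anticipate a genuine obstacle; the proof is essentially a bookkeeping argument. The one point worth flagging is where the hypothesis is actually used: bipartiteness of $G_1$ (resp.\ $G_2$) is precisely what rules out the pairs $\{a_i,a_j\}$ (resp.\ $\{b_i,b_j\}$) — in a general cubic graph two neighbours of $v_1$ could be adjacent, and then the inserted matching would fail to be induced — so this is the single place the ``bipartite'' assumption enters the argument.
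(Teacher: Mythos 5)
Your proof is correct and follows essentially the same route as the paper: the size-three claim is immediate, cross-pairs are excluded because the only edges between the two sides are the matching edges themselves, and same-side pairs are excluded using bipartiteness of $G_1$ (resp.\ $G_2$), which you phrase via colour classes while the paper phrases it as forbidding a triangle through $v_1$ -- an equivalent argument.
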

\begin{proof}
    The fact that $|F| = 3$ is immediate.
    For the fact that the endpoints $u,w$ of any two distinct edges in $F$ are non-adjacent, first note that any such edge must have both endpoints either in $V(G_1)$ or in $V(G_2)$ according to the definition of the star product.
    Suppose that $u,w \in V(G_1)$.
    Both $u$ and $w$ are adjacent with $v_1$ and thus the edge $uw$ creates a cycle of length three, contradicting the fact that $G_1$ is bipartite.
    For an analogous reason no such edge may exist if $u,w \in V(G_2)$.
\end{proof}

We further note the following nice correspondence between the star product and tight cuts.

\begin{observation}\label{obs:starproductcorrespondence}
    Let $G$ be a 3-connected, cubic, bipartite graph with a non-trivial tight cut $\Cut{X}$.
    If $G_1$ and $G_2$ are the two tight cut contractions associated with $\Cut{X}$, then $G$ is a star product of $G_1$ and $G_2$.
\end{observation}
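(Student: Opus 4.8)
The plan is to unwind the definitions of tight cut contraction and star product directly. Let $G_1$ be obtained by contracting $\Compl{X}$ into a vertex $c_1$ and $G_2$ by contracting $X$ into a vertex $c_2$, so that $V(G_1) = X \cup \{c_1\}$ and $V(G_2) = \Compl{X} \cup \{c_2\}$. By \Cref{lem:tightcutshape} the cut $\Cut{X}$ is an induced matching with exactly three edges; write $\Cut{X} = \{e, f, g\}$ and let $x_e, x_f, x_g \in X$ and $y_e, y_f, y_g \in \Compl{X}$ be the endpoints of $e, f, g$ respectively. Then these six vertices are pairwise distinct and, apart from the three edges of $\Cut{X}$ themselves, pairwise non-adjacent in $G$.

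First I would verify that $c_1$ and $c_2$ have degree exactly $3$, as the star product requires. In $G$ precisely the three edges $e, f, g$ leave $X$, so after contracting $\Compl{X}$ the only edges incident to $c_1$ are the images of $e, f, g$; since these have pairwise distinct endpoints $x_e, x_f, x_g$ in $X$, the contraction creates neither a loop nor parallel edges at $c_1$, whence $\deg_{G_1}(c_1) = 3$ with neighbourhood $\{x_e, x_f, x_g\}$. The symmetric argument gives $\deg_{G_2}(c_2) = 3$ with neighbourhood $\{y_e, y_f, y_g\}$. Moreover, since the only edges of $G$ with one endpoint in $X$ and the other in $\Compl{X}$ are $e, f, g$, deleting $c_1$ from $G_1$ leaves exactly the induced subgraph $G[X]$, and likewise $G_2 - c_2 = G[\Compl{X}]$; in particular no edge of $G[X]$ or $G[\Compl{X}]$ is destroyed by the contraction.

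Now I would form the star product $(G_1, c_1) * (G_2, c_2)$, choosing among the size-$3$ matchings between $N_{G_1}(c_1) = \{x_e, x_f, x_g\}$ and $N_{G_2}(c_2) = \{y_e, y_f, y_g\}$ allowed by the definition the one given by $\{x_e y_e,\, x_f y_f,\, x_g y_g\}$. Deleting $c_1$ and $c_2$ produces $G[X]$ and $G[\Compl{X}]$ by the previous paragraph, and adding this matching reinserts precisely the edges $e, f, g$. Hence the resulting graph is $G[X] \cup G[\Compl{X}] \cup \{e, f, g\} = G$, so $G$ is indeed a star product of $G_1$ and $G_2$.

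As this is essentially a bookkeeping exercise, I do not anticipate a real obstacle. The single point requiring care is the handling of loops and parallel edges in the definition of tight cut contraction: one must invoke \Cref{lem:tightcutshape} to know that $\Cut{X}$ is a matching, so that contracting either side introduces no loop or parallel edge at the contraction vertex and the contraction vertex minus its star is exactly the relevant induced subgraph of $G$. A minor additional subtlety is that the star product is only specified up to the choice of matching joining the two neighbourhoods, so it suffices to exhibit one such choice recovering $G$, which is what the argument does.
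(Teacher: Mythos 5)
Your argument is correct and is exactly the routine definitional verification the paper leaves implicit (the statement is given as an observation without proof): \Cref{lem:tightcutshape} makes $\Cut{X}$ a matching of three edges, so both contraction vertices have degree three, the contractions restrict to $G[X]$ and $G[\Compl{X}]$, and reinserting the three cut edges as the joining matching recovers $G$. No gaps; your care about loops, parallel edges, and the freedom in choosing the joining matching is precisely the point that makes the observation immediate.
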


This leads us nicely into the reduction of \Cref{con:2FH} to \Cref{con:main}.

\begin{theorem}
    \Cref{con:2FH} holds if and only if \Cref{con:main} is true.
\end{theorem}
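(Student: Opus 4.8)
The plan is to prove the two implications separately, leaning on the dictionary between the star product and non-trivial tight cuts furnished by \Cref{lem:starproductmatching}, \Cref{lem:tightcutshape}, and \Cref{obs:starproductcorrespondence}, together with the fact (\Cref{lem:tightcut2FH}) that, in the cubic bipartite world, tight cut contractions preserve 2-factor Hamiltonicity. Throughout, call the class of graphs constructible from $K_{3,3}$ and the Heawood graph by repeated star products \emph{Diwan's family}.

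For ``\Cref{con:2FH} $\Rightarrow$ \Cref{con:main}'' I would first isolate the sub-claim that the only braces in Diwan's family are $K_{3,3}$ and the Heawood graph themselves. Suppose $G$ is a brace in Diwan's family that is not one of the two base graphs; then $G = (H_1,v_1)*(H_2,v_2)$ for cubic bipartite graphs $H_1, H_2$ and degree-$3$ vertices $v_1, v_2$. By \Cref{lem:starproductmatching} its principal $3$-edge cut is an induced matching of size $3$, and it is non-trivial, since $H_1$ and $H_2$, being cubic and bipartite, each have at least six vertices and so each side of the cut retains at least five. But a cubic brace has at least six vertices, hence is $2$-extendable, hence $3$-connected by \Cref{thm:extendabilitybasics}; so \Cref{lem:tightcutshape} turns the principal $3$-edge cut into a non-trivial tight cut of $G$, contradicting that $G$ is a brace. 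Granting the sub-claim: a $2$-factor Hamiltonian cubic brace $G$ lies in Diwan's family by \Cref{con:2FH}, hence is $K_{3,3}$ or the Heawood graph; since both are (well known to be) $2$-factor Hamiltonian cubic braces, \Cref{con:main} follows.

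For the converse, assume \Cref{con:main} and let $G$ be any $2$-factor Hamiltonian cubic bipartite graph; I would show $G$ lies in Diwan's family by induction on $|V(G)|$. By \Cref{lem:3con} the graph $G$ is $3$-connected, and by \Cref{thm:konig} it is matching covered, so the tight cut decomposition applies. If $G$ has no non-trivial tight cut, it is a brace, hence a $2$-factor Hamiltonian cubic brace, hence $K_{3,3}$ or the Heawood graph by \Cref{con:main}, and therefore lies in Diwan's family. Otherwise fix a non-trivial tight cut $\Cut{X}$: by \Cref{lem:tightcutpreserve} the two tight cut contractions $G_1, G_2$ are cubic, $3$-connected, and bipartite; by \Cref{lem:tightcut2FH} they are both $2$-factor Hamiltonian; and each is strictly smaller than $G$ since $\Cut{X}$ is non-trivial. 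By induction $G_1$ and $G_2$ lie in Diwan's family, and by \Cref{obs:starproductcorrespondence} the graph $G$ is a star product of $G_1$ and $G_2$ (taken at their degree-$3$ contraction vertices), so $G$ lies in Diwan's family too.

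As almost every ingredient is already assembled in the preceding lemmas, I do not expect a genuine obstacle; the care points are (i) checking that the principal $3$-edge cut of a star product is genuinely \emph{non-trivial}, which rests on cubic bipartite graphs having at least six vertices, and (ii) justifying the use of \Cref{lem:tightcutshape} for the brace $G$ in the first direction, which needs braces on at least six vertices to be $3$-connected (via \Cref{thm:extendabilitybasics} and the description of braces as $C_4$ or $2$-extendable graphs on at least six vertices recalled in the introduction). A final piece of bookkeeping is to verify that $K_{3,3}$ and the Heawood graph are indeed $2$-factor Hamiltonian cubic braces, so that \Cref{con:main} is not vacuous; this is classical.
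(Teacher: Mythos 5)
Your proposal is correct and follows essentially the same route as the paper: the forward direction turns the principal 3-edge cut of any non-base member of Diwan's family into a non-trivial tight cut via \Cref{lem:starproductmatching} and \Cref{lem:tightcutshape}, and the converse runs a minimal-counterexample/induction argument using \Cref{lem:3con}, \Cref{lem:tightcutpreserve}, \Cref{lem:tightcut2FH}, and \Cref{obs:starproductcorrespondence}. Your extra care about the non-triviality of the cut and the 3-connectivity needed for \Cref{lem:tightcutshape} only makes explicit what the paper leaves implicit.
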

\begin{proof}
    Suppose first that \Cref{con:2FH} holds and that there exists another 2-factor Hamiltonian, cubic brace $B$ that is neither the Heawood graph nor $K_{3,3}$.
    Accordingly $B$ must be the star product of two 3-connected, cubic, bipartite graphs.
    But \Cref{lem:starproductmatching} and \Cref{lem:tightcutshape} imply that the principal 3-edge cut involved is a non-trivial tight cut, which contradicts $B$ being a brace.

    For the other direction, suppose \Cref{con:main} holds, but there exists a 2-factor Hamiltonian, cubic, bipartite graph $G$ that cannot be built from $K_{3,3}$ and the Heawood graph via the star product.
    We choose $G$ such that it is a smallest counterexample to \Cref{con:2FH}.
    First we note that according to \Cref{lem:3con} the graph $G$ must be 3-connected and $G$ cannot itself be a brace, as otherwise  \Cref{con:main} implies that $G$ has to be $K_{3,3}$ or the Heawood graph.
    Thus $G$ contains a non-trivial tight cut $\Cut{X}$.
    According to \Cref{lem:tightcutpreserve}, both $G_1$ and $G_2$ are 3-connected, cubic, and bipartite.
    Furthermore, since $G$ is 2-factor Hamiltonian, \Cref{lem:tightcut2FH} tells us that both $G_1$ and $G_2$ are also 2-factor Hamiltonian.
	Due to the minimality of $G$, it must therefore be possible to construct both $G_1$ and $G_2$ via the star product from $K_{3,3}$ or the Heawood graph.
	However, \Cref{obs:starproductcorrespondence} now implies that $G$ itself can be constructed from $K_{3,3}$ or the Heawood graph, a contradiction.
\end{proof}

\section{Cycles of length four in Pfaffian braces}\label{sec:pfaffian}

Our goal now is to show that all Pfaffian braces except the Heawood graph have girth 4, as stated in \Cref{thm:main}.
We start our journey with planar braces and show that they contain many 4-cycles.
Given a 3-connected, planar graph $G$, we associate $G$ with its drawing in the plane, which is known to be essentially unique thanks to a result by Whitney \cite{Whitney19332Isomorphic}, and let $F(G)$ be the faces of said drawing.
Since braces other than $C_4$ are 3-connected according to \Cref{thm:extendabilitybasics}, this suffices for our efforts.
According to another result by Whitney all faces of braces thus correspond to cycles \cite{Whitney1932Nonseparable}.
We will also use the classic formula $|V(G)| - |E(G)| + |F(G)| = 2$ by Euler for planar graphs, which can be derived from \cite{Euler1758Elementa}.

\begin{lemma}\label{lem:planar4cycles}
    All planar braces $B$ other than $C_4$ contain at least six distinct 4-cycles each corresponding to a face of $B$.
\end{lemma}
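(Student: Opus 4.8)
The plan is to work with a planar brace $B \neq C_4$, drawn in the plane, and to exploit both the bipartiteness and the 2-extendability of $B$ together with Euler's formula. First I would record the structural facts: since $B$ is a brace other than $C_4$, by \Cref{thm:extendabilitybasics} it is 3-connected, so its planar embedding is essentially unique and, by Whitney's theorem, every face is bounded by a cycle; moreover $B$ is bipartite, so every face cycle has even length, hence length at least $4$. The goal is to show that at least six faces have length exactly $4$.

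The main step is a counting argument. Let $n = |V(B)|$, $m = |E(B)|$, and for each even $k \geq 4$ let $f_k$ be the number of faces of length $k$, so $\sum_k f_k = |F(B)|$ and $\sum_k k f_k = 2m$ (each edge lies on two faces). Euler's formula gives $n - m + \sum_k f_k = 2$. I would combine these to get a lower bound on $f_4$ of the form $f_4 \geq 4 + \sum_{k \geq 6} \tfrac{k-4}{2} f_k \geq 4$, which already yields four quadrilateral faces but not six; the deficit of two is exactly what the extra hypotheses must buy. To push from four to six, I expect to need a lower bound on $m$ in terms of $n$ coming from $2$-extendability (via $3$-connectivity, $m \geq \tfrac{3}{2}n$), and possibly a case analysis on the minimum degree: if $B$ is cubic the counts are tight and one argues directly, while if $B$ has a vertex of degree $\geq 4$ one gains slack in the edge count. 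Alternatively, and perhaps more cleanly, I would argue that the four (or more) quadrilateral faces guaranteed by the raw Euler count cannot all be "concentrated" — e.g.\ if only four $4$-faces existed and all larger faces had length $\geq 6$, the equation $n - m + |F| = 2$ forces a very rigid relationship between $n$, $m$, and $|F|$ that I would show is incompatible with $B$ being $2$-extendable and not $C_4$ (for instance by ruling out the small sporadic cases by hand).

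The hard part will be extracting the last two quadrilateral faces: the bare Euler/bipartite count is genuinely only strong enough for four, so the argument must use $2$-extendability in an essential way rather than merely through connectivity. I anticipate the cleanest route is to suppose for contradiction that $B$ has at most five $4$-faces, derive tight numerical constraints, and then either locate a matching of size two not extending to a perfect matching (contradicting that $B$ is a brace) or reduce to a finite list of small planar graphs that can be checked to be either $C_4$ or to contain six quadrilateral faces after all. A secondary technical point is making sure the six $4$-cycles we produce are genuinely distinct as cycles and each bounds a face — this is immediate once they are face boundaries in the unique embedding, so no extra work is needed there.
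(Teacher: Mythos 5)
Your setup is the right one, but the proposal stalls exactly where it should finish: your claim that the Euler/bipartite count is ``genuinely only strong enough for four'' 4-faces is incorrect, and the extra machinery you then sketch (using 2-extendability beyond 3-connectivity, hunting for a non-extendable matching of size two, or checking a finite list of sporadic graphs) is never actually carried out, so as written the last two quadrilateral faces are not obtained. In fact the counting you already have in hand gives six directly. Since $B \neq C_4$ is a brace, it is 2-extendable and hence 3-connected by \Cref{thm:extendabilitybasics}, so it has minimum degree at least $3$, giving $3|V(B)| \leq 2|E(B)|$. Bipartiteness and 3-connectivity give that every face is a cycle of even length at least $4$, so with $f_4$ the number of quadrilateral faces,
\begin{equation*}
  2|E(B)| \;=\; \sum_{f \in F(B)} d(f) \;\geq\; 4f_4 + 6\bigl(|F(B)| - f_4\bigr) \;=\; 6|F(B)| - 2f_4 .
\end{equation*}
Substituting both inequalities into Euler's formula yields
\begin{equation*}
  2 \;=\; |V(B)| - |E(B)| + |F(B)| \;\leq\; \tfrac{2}{3}|E(B)| - |E(B)| + \tfrac{1}{3}|E(B)| + \tfrac{f_4}{3} \;=\; \tfrac{f_4}{3},
\end{equation*}
so $f_4 \geq 6$, which is precisely the paper's proof. (The bound is tight for the cube, so no strengthening via 2-extendability is available or needed.)

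Note also that your intermediate claim $f_4 \geq 4 + \sum_{k\geq 6}\tfrac{k-4}{2}f_k$ cannot follow from bipartiteness and Euler's formula alone: a long even cycle $C_{2k}$ is bipartite, planar and 2-connected with no quadrilateral face, so some degree hypothesis must enter; once you do use minimum degree $3$, the computation above already delivers six, not four. So the ``hard part'' you anticipate does not exist, and the sporadic-case analysis should be deleted rather than filled in.
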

\begin{proof}
    Let $B$ be a planar brace other than $C_4$.
    As just noted, $B$ is 2-extendable and thus 3-connected according to \Cref{thm:extendabilitybasics}.
    Therefore $B$ has minimum degree at least 3 and we have
    \begin{align}
        \nonumber & \ 2|E(B)| = \sum_{u \in V(B)} deg(u) \geq \sum_{u \in V(B)} 3 = 3|V(B)| \\
        \Leftrightarrow & \ \nicefrac{2}{3}|E(B)| \geq |V(B)| . \label[Ineq]{eq:edgesVSvertices}
    \end{align}
    We let $c_4$ be the number of faces corresponding to 4-cycles in $B$ and for any $f \in F(B)$, we let $d(f)$ be the length of the cycle corresponding to $f$ in $B$.
    As $B$ is bipartite, any cycle in $B$ must have an even number of vertices and thus $d(f)$ is even and at least 4 for all $f \in F(B)$.
    In particular, we note that $\sum_{f \in F(B)} d(f) = 2|E(B)|$.
    This leads to
    \begin{align}
        \nonumber & \ 2|E(B)| = \sum_{f \in F(B)} d(f) \geq 6(F(B) - c_4) + 4c_4 = 6|F(B)| - 2c_4 \\
        \nonumber \Leftrightarrow & \ 2|E(B)| + 2c_4 \geq 6 |F(B)| \\
        \Leftrightarrow & \ \nicefrac{1}{3}|E(B)| + \nicefrac{c_4}{3} \geq |F(B)| . \label[Ineq]{eq:edgesVSfaces}
    \end{align}
    By inserting (\ref{eq:edgesVSvertices}) and (\ref{eq:edgesVSfaces}) into Euler's formula, we get
    \[ 2 = |V(B)| - |E(B)| + |F(B)| \leq \nicefrac{2}{3} |E(B)| - |E(B)| + \nicefrac{1}{3} |E(B)| + \nicefrac{c_4}{3} = \nicefrac{c_4}{3} . \]
    This yields $c_4 \geq 6$ and thus $B$ contains at least six distinct 4-cycles.
\end{proof}

\Cref{lem:planar4cycles} is tight, as can be seen in the cube (see \Cref{fig:heawood}).
We will also need a useful result on separating 4-cycles in braces from McCuaig.

\begin{lemma}[McCuaig \cite{McCuaig2004Polyas}]\label{lem:4cyclesplit}
    Let $G_1$ and $G_2$ be bipartite such that $G_1 \cap G_2$ is a 4-cycle.
    If $G_1 \cup G_2$ is a brace then $G_1$ and $G_2$ are braces.
\end{lemma}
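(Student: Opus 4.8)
The plan is to prove \Cref{lem:4cyclesplit} by showing that $G_1$ (and symmetrically $G_2$) satisfies the characterisation of braces we are working with, namely that $G_1$ is either $C_4$ or has at least six vertices and is 2-extendable. Write $C = G_1 \cap G_2$ for the shared $4$-cycle, with $V(C) = \{a_1, b_1, a_2, b_2\}$ labelled so that $a_1, a_2$ lie on one side of the bipartition and $b_1, b_2$ on the other. If $G_1 = C$ we are done, so assume $G_1$ has a vertex outside $C$; I would show $G_1$ is 2-extendable. Since $G_1 \cup G_2$ is a brace, it is either $C_4$ (impossible, as $G_1$ already has more vertices than $C_4$ once $V(G_1) \setminus V(C) \neq \emptyset$ and $G_2 \neq C$ as well — or rather one handles the degenerate cases directly) or it has at least six vertices and is 2-extendable, hence 3-connected by \Cref{thm:extendabilitybasics}.

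\textbf{Step 1: every perfect matching of $G_1$ extends to one of $G_1 \cup G_2$.} First I would establish that $G_1$ has a perfect matching at all, and more: that $C$ has exactly two perfect matchings, $\{a_1b_1, a_2b_2\}$ and $\{a_1b_2, a_2b_1\}$, and that both extend to perfect matchings of $G_2$ — this should follow because $G_1 \cup G_2$, being a brace, is matching covered, so each edge of $C$ lies in a perfect matching $M$ of $G_1 \cup G_2$, and $M \cap G_2$ together with $M \cap C$ covers $V(G_2)$. The key structural point: a perfect matching $M_1$ of $G_1$ restricts on $C$ to either one of these two matchings of $C$, or possibly uses edges of $C$ asymmetrically — but since $V(C)$ must be covered by $M_1$ using only edges of $G_1$, and the two sides $\{a_1,a_2\}$, $\{b_1,b_2\}$ each have size two, $M_1 \cap E(C)$ is one of: $\emptyset$, a single edge, or a perfect matching of $C$. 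In each case I want to glue $M_1$ with a suitable matching of $G_2$ covering $V(G_2) \setminus V(C)$ plus whatever vertices of $C$ are left uncovered by $M_1$; the needed matchings of $G_2$ exist because $G_2$ inherits enough structure from the brace $G_1 \cup G_2$ (every such "partial demand" on $C$ can be met inside $G_2$, again using that $G_1 \cup G_2$ is matching covered and every edge of $C$ is in a perfect matching).

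\textbf{Step 2: lift 2-extendability.} Let $\{e, f\}$ be a matching of size two in $G_1$. I want a perfect matching of $G_1$ containing it. Regard $\{e,f\}$ as a matching of size at most two in $G_1 \cup G_2$; by 2-extendability of $G_1 \cup G_2$ there is a perfect matching $M$ of $G_1 \cup G_2$ with $e, f \in M$. Now I must convert $M$ into a perfect matching of $G_1$ still containing $e$ and $f$. The obstruction is that $M$ may use edges of $G_2 \setminus C$. The remedy: $M \cap E(G_2)$ covers $V(G_2) \setminus V(C)$ and some subset $T \subseteq V(C)$; the set $V(C) \setminus T$ is covered by $M \cap (E(G_1) \setminus E(C)) \cup (M \cap E(C))$. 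Replace the part of $M$ inside $G_2$ by a matching of $G_1$ that covers $V(C) \setminus T$ using only edges of $C$ — here one checks $V(C) \setminus T$ is an even, "balanced" subset of $C$ (equal numbers on each side), which holds because $M$ covers $V(C)$ with edges each having one endpoint on each side, so the portion covered from the $G_1$-side is balanced, hence the complementary portion $V(C)\setminus T$ is balanced and thus spanned by an edge subset of the $4$-cycle $C$. Since $C \subseteq G_1$, this gives a perfect matching of $G_1$ containing $e, f$. Finally $|V(G_1)| \geq 6$: if $G_1$ has a vertex off $C$ it has at least five vertices, and bipartiteness plus $V(G_i)\setminus V(C)\neq\emptyset$ for the other two pieces (or a direct parity argument) pushes this to six; the degenerate possibility $|V(G_1)| = 5$ with $G_1 \neq C_4$ would need separate dismissal, most easily by noting $G_1 \cup G_2$ then fails 3-connectivity or fails to be a brace.

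\textbf{Main obstacle.} The delicate part is Step 2 — specifically, verifying that after deleting the $G_2$-portion of the extended matching $M$, the leftover uncovered vertices of $C$ always form a balanced subset that can be re-covered within $C$ itself, and simultaneously that $e$ and $f$ (which could both lie on or touch $C$) are not disturbed by this surgery. One must be careful when $e$ or $f$ is an edge of $C$ or is incident to a vertex of $C$, since then the rerouting inside $C$ is constrained; the argument needs a short case analysis on how many of $e, f$ meet $V(C)$, using that $C$ has only four vertices and two perfect matchings so the constraints are very rigid. I expect this case analysis, together with cleanly handling the small-order degeneracies to guarantee $|V(G_1)| \geq 6$, to be the technical heart of the proof.
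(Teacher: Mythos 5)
The paper does not prove this lemma at all -- it is imported verbatim from McCuaig -- so there is no in-paper proof to measure you against; judged on its own terms, your overall strategy (extend a size-two matching of $G_1$ to a perfect matching $M$ of the brace $G_1 \cup G_2$ via 2-extendability, then replace the $G_2$-portion of $M$ by edges of $C$) is viable, and the surgery is in fact simpler than you anticipate: keep exactly the $M$-edges with both endpoints in $V(G_1)$ (these include $e$, $f$ and any $C$-edges of $M$, so $e,f$ are never disturbed), and note that in a bipartite 4-cycle every vertex of one colour class is adjacent to both vertices of the other, so \emph{any} balanced subset of $V(C)$ is spanned by a matching inside $C$. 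The case analysis on how $e,f$ meet $C$ that you flag as the technical heart is therefore unnecessary.

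The genuine gap is the balance claim itself. Your justification -- ``$M$ covers $V(C)$ with edges each having one endpoint on each side, so the portion covered from the $G_1$-side is balanced'' -- does not follow: an $M$-edge joining a vertex of $C$ to a vertex of $V(G_1) \setminus V(C)$ covers only one vertex of $C$, so nothing you have said excludes, say, $a_1,a_2$ being matched into $G_1 - V(C)$ while $b_1,b_2$ are matched into $G_2 - V(C)$, leaving $\{b_1,b_2\}$, which no matching of $C$ can cover. What is actually needed is that $V(G_1)\setminus V(C)$ and $V(G_2)\setminus V(C)$ are themselves balanced, and this deserves its own short argument: the two opposite edges $a_1b_1$ and $a_2b_2$ of $C$ form a matching of size two in $G_1 \cup G_2$, so by 2-extendability they lie in a common perfect matching $N$; the edges of $N$ inside $G_2$ then perfectly match $V(G_2)\setminus V(C)$, which gives the balance, and consequently for every perfect matching $M$ of $G_1\cup G_2$ the set of $C$-vertices matched into $V(G_2)\setminus V(C)$ is balanced and hence rematchable within $C$. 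The same $N$, restricted to $V(G_1)$, is a perfect matching of $G_1$, which makes your Step~1 redundant and also settles the order question: $|V(G_1)|$ is even, so $G_1 \neq C$ forces $|V(G_1)| \geq 6$, and 2-extendability then yields that $G_1$ is a brace under the characterisation used in the paper. With this one insertion your plan goes through; without it, the crucial rerouting step is unsupported.
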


Using this lemma we can now prove that we can find edge-disjoint 4-cycles within planar braces, where two cycles $C,C'$ are called \emph{edge-disjoint} if $E(C) \cap E(C') = \emptyset$.

\begin{lemma}\label{lem:edgedisjoint4cycles}
    Let $B$ be a planar brace other than $C_4$.
    For each 4-cycle $C$ in $B$ there exists a 4-cycle $C'$ in $B$ that is edge-disjoint from $C$.
\end{lemma}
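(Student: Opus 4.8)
The plan is to reduce to the case where the given $4$-cycle $C$ bounds a face and then run a short Euler-type face count. First I would note that $C$ has no chord: an edge between two opposite vertices of $C$ would close a triangle with one of the two arcs of $C$, contradicting that $B$ is bipartite. Fix a plane embedding of $B$; since $B$, being a brace other than $C_4$, is $3$-connected by \Cref{thm:extendabilitybasics}, every face of this embedding is bounded by a cycle. A chordless cycle that is not a face must have a vertex of $B$ in each of the two open regions it bounds. So if $C$ is not a face, let $G_1$ (resp.\ $G_2$) be $C$ together with the part of $B$ drawn in the closed region on one side (resp.\ the other side) of $C$; then $G_1 \cap G_2 = C$ and $G_1 \cup G_2 = B$, so \Cref{lem:4cyclesplit} yields that $G_1$ is a brace, it is planar as a subgraph of $B$, and it has at least five vertices (the four of $C$ plus one strictly inside), so $G_1 \neq C_4$; moreover $C$ bounds the outer face of $G_1$. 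Since every $4$-cycle of $G_1$ is also a $4$-cycle of $B$, it suffices to prove the statement for $G_1$. Hence in all cases we may work with a planar brace $B' \neq C_4$ (either $B$ itself or $G_1$) in which $C$ bounds a face.

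Now I would count faces of $B'$. By \Cref{lem:planar4cycles} there are at least six faces of $B'$ whose boundary is a $4$-cycle. At most one of these is the face bounded by $C$. Each of the four edges of $C$ lies on exactly two faces of $B'$ — here I use that $B'$ is $3$-connected, hence bridgeless — and one of these two faces is the one bounded by $C$; therefore at most four faces of $B'$ other than the one bounded by $C$ contain an edge of $C$. Consequently at least $6 - 1 - 4 = 1$ of the six $4$-cycle faces is neither the face bounded by $C$ nor shares an edge with $C$, and its bounding $4$-cycle is the desired $C'$ (a genuine $4$-cycle of $B' \subseteq B$ with $E(C') \cap E(C) = \emptyset$).

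The step I expect to be the real obstacle is handling a $C$ that is not already a face: without the reduction, a non-facial $4$-cycle could meet up to eight faces of $B$ along its edges, and the bound of six $4$-cycle faces from \Cref{lem:planar4cycles} is then too weak. Splitting $B$ along $C$ via \Cref{lem:4cyclesplit} is exactly what collapses the region on the far side of $C$ into a single face, cutting the worst case from eight incident faces down to four and leaving just enough room for the count to close; the small technical points that need care are that the piece $G_1$ is again a brace other than $C_4$ (so that \Cref{lem:planar4cycles} applies to it) and that $C$ genuinely becomes a face of it, both of which follow from the no-chord observation together with \Cref{lem:4cyclesplit}.
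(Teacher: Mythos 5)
Your proof is correct and follows essentially the same route as the paper: the facial case is settled by counting the $\geq 6$ facial 4-cycles from \Cref{lem:planar4cycles} against the two faces incident to each edge of $C$, and the non-facial case is reduced to the facial one by splitting along $C$ via \Cref{lem:4cyclesplit}. Your write-up is in fact a touch more careful than the paper's (the chordlessness of $C$, the check that $G_1 \neq C_4$, and that $C$ becomes facial in $G_1$ are made explicit), but these are refinements of the same argument, not a different one.
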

\begin{proof}
    Let $C$ be a 4-cycle in $B$.
    Suppose $C$ corresponds to a face of $B$.
    According to \Cref{lem:planar4cycles} there exists at least five more faces bounded by 4-cycles.
    If none of these 4-cycles are edge-disjoint from $C$, then according to the pigeon hole principle there exists an edge $e \in E(C)$ that is found in two of these cycles.
    However, this implies that $e$ is contained in the boundary of three faces, which is impossible.
    Thus there must exist a 4-cycle $C'$ that is edge-disjoint from $C$ in this case.

    Thus, we can instead suppose $C$ does not correspond to a face of $B$.
    Accordingly $C$ separates the graph and in particular there exist two subgraphs $G_1, G_2 \subseteq G$ with $G_1 \cap G_2 = C$ and $G_1 \cup G_2 = G$.
    This allows us to apply \Cref{lem:4cyclesplit} and conclude that both $G_1$ and $G_2$ are braces.
    Since $G$ is planar, both $G_1$ and $G_2$ must also be planar and in particular, $C$ bounds a face in both of these graphs.
    As discussed in the previous paragraph, this means that both $G_1$ and $G_2$ contain a 4-cycle that is edge-disjoint from $C$ and thus $G$ itself also contains such a 4-cycle.
\end{proof}

This then allows us to prove the existence of edge-disjoint 4-cycles in non-planar braces.

\begin{lemma}\label{lem:nonplanar4cycles}
    Let $B$ be a non-planar Pfaffian brace other than the Heawood graph.
    Then $B$ contains three edge-disjoint 4-cycles and for each 4-cycle $C$ in $B$ there exists a 4-cycle $C'$ in $B$ that is edge-disjoint from $C$.
\end{lemma}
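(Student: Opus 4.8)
The plan is to use Theorem~\ref{thm:pfaffian}: any non-planar Pfaffian brace $B$ other than the Heawood graph arises from planar braces by repeated trisums at $4$-cycles. I would induct on the number of trisum operations used to build $B$. The base case is covered by Lemma~\ref{lem:edgedisjoint4cycles}, since a planar brace other than $C_4$ already has the desired property (and in fact contains many $4$-cycles, so three pairwise edge-disjoint ones follow from iterating Lemma~\ref{lem:edgedisjoint4cycles}, or more directly from Lemma~\ref{lem:planar4cycles} together with the face-boundary argument). Note that we should first dispose of the possibility that $B$ is itself $C_4$ or otherwise too small: a non-planar brace has at least, say, $14$ vertices, so there is plenty of room.

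For the inductive step, write $B$ as a trisum of bipartite graphs $G_1, G_2, G_3$ whose pairwise intersection is a $4$-cycle $C$, with $S \subseteq E(C)$ the deleted edge set, so $B = (G_1 \cup G_2 \cup G_3) - S$. The key structural point I would establish is that each $G_i$ is again a brace: this should follow from Lemma~\ref{lem:4cyclesplit} applied twice, viewing $B \cup S$ (or the relevant union) as built from two pieces glued along $C$, and using that re-adding the edges of $S$ to $B$ keeps it a brace (adding edges to a brace on a fixed bipartition preserves $2$-extendability). Since each $G_i$ is a brace with $V(G_i) \setminus V(C) \neq \emptyset$, it is not $C_4$, hence has at least six vertices and is $3$-connected. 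Each $G_i$ is also Pfaffian, being a subgraph-of-sorts of $B$ — more carefully, each $G_i$ is itself a brace constructed by fewer trisums (or is planar), so the induction hypothesis, or Lemma~\ref{lem:edgedisjoint4cycles} in the planar case, applies to $G_i$.

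Now I would exploit that at most $|E(C)| = 4$ edges get deleted, and these all lie on the single $4$-cycle $C$, which is shared by all three $G_i$. So for at least two of the three indices, say $i \in \{1,2\}$, the graph $G_i$ contains a $4$-cycle $C_i$ that is \emph{edge-disjoint from $C$}: indeed $G_i$ is a brace other than $C_4$, so by the base/inductive case $G_i$ has a $4$-cycle edge-disjoint from the $4$-cycle $C$, and that cycle survives in $B$ since no edge outside $C$ is deleted. The cycles $C_1 \subseteq G_1$ and $C_2 \subseteq G_2$ share no edges because $G_1 \cap G_2 = C$ and each $C_i$ avoids $E(C)$; and if $|S| < 4$ then $C$ itself (or a $4$-cycle very close to it) plus $C_1, C_2$ gives a third — but to be safe I would instead note that in $G_1$ one can iterate Lemma~\ref{lem:edgedisjoint4cycles} to get two edge-disjoint $4$-cycles both avoiding $E(C)$, which together with $C_2$ yields three pairwise edge-disjoint $4$-cycles in $B$. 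For the second claim, given an arbitrary $4$-cycle $C^\ast$ in $B$: it lies within the union, and since the $G_i$ pairwise meet only in $C$, either $C^\ast$ is contained in some single $G_i$, or $C^\ast = C$, or $C^\ast$ uses edges of $C$ together with edges of one other $G_j$ and lives in $G_i \cup C$ for a single $i$ — in every case $C^\ast$ lies in $G_i \cup C$ for some $i$, and since $G_i$ (hence $G_i$ with the edges of $C$ restored, still a brace) has a $4$-cycle edge-disjoint from $C^\ast$ by induction, and such a cycle can be chosen to avoid all of $E(C)$ hence survive the deletion of $S$, we are done.

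The main obstacle I anticipate is the bookkeeping around the trisum: making precise that each $G_i$ is a brace (the statement of the trisum in Theorem~\ref{thm:pfaffian} does not hand this to us directly, and re-adding the deleted edges $S$ to argue via Lemma~\ref{lem:4cyclesplit} needs a careful argument that adding a chord of a $4$-cycle to a brace on the same bipartition preserves being a brace), and that each $G_i$ is constructible by strictly fewer trisums so the induction is well-founded — equivalently, that the trisum decomposition of $B$ restricts to a valid shorter decomposition of each $G_i$. Once that is in hand, the cycle-counting is routine because only the four edges of the single cycle $C$ can ever be removed.
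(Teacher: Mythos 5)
Your overall skeleton matches the paper's proof: induct via \Cref{thm:pfaffian}, decompose $B$ as a trisum of three braces meeting in the $4$-cycle $C$, and pull $4$-cycles edge-disjoint from $C$ out of each summand using \Cref{lem:edgedisjoint4cycles} or the induction hypothesis. The first conclusion (three pairwise edge-disjoint $4$-cycles) goes through on your argument. The second conclusion, however, has a genuine gap in the case analysis. You claim that every $4$-cycle $C^\ast$ of $B$ lies in $G_i \cup C$ for a single $i$; this is false. Since $G_i \cup C = G_i$, the claim amounts to $C^\ast \subseteq G_i$ for some $i$, but a $4$-cycle of $B$ may straddle two summands without using any edge of $C$: take two opposite vertices $u,w$ of $C$, a vertex $x \in V(G_1)\setminus V(C)$ adjacent to both in $G_1$, and a vertex $y \in V(G_2)\setminus V(C)$ adjacent to both in $G_2$; then $uxwyu$ is a $4$-cycle of $B$ contained in no single $G_i$. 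This is exactly the case the paper isolates via $|V(H)\cap V(C)| = 2$. A second, independent problem occurs even in your ``easy'' case $C^\ast \subseteq G_i$: you assert that the $4$-cycle edge-disjoint from $C^\ast$ supplied by the induction hypothesis ``can be chosen to avoid all of $E(C)$'', so that it survives the deletion of $S$. Neither \Cref{lem:edgedisjoint4cycles} nor your inductive statement provides a $4$-cycle edge-disjoint from \emph{two} prescribed $4$-cycles ($C^\ast$ and $C$) simultaneously, so this step is unsupported; even in the planar case, \Cref{lem:planar4cycles} plus the face argument does not obviously give it.

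Both gaps are repaired by the device you already use for the first conclusion, which is how the paper argues: fix, for each $i$, a $4$-cycle $C^i \subseteq G_i$ edge-disjoint from $C$ (hence present in $B$), and then show that any $4$-cycle $H$ of $B$ either lies in one summand, in which case $C^j$ for any other index $j$ works, or satisfies $|V(H)\cap V(C)| = 2$ and lies in $G_i \cup G_j$ for two indices, in which case $C^h$ from the third summand is edge-disjoint from $H$. This never requires finding, inside a single summand, a $4$-cycle avoiding two cycles at once. On your other concerns: the paper sidesteps the ``is each summand a brace, and is the induction well-founded'' bookkeeping by inducting on $|V(B)|$ and reading \Cref{thm:pfaffian} as expressing $B$ as a trisum of three braces on fewer vertices; your route through \Cref{lem:4cyclesplit} plus an edge-addition argument could likely be made to work but is unnecessary. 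Also, your remark that a non-planar brace has at least $14$ vertices is false ($K_{3,3}$ is a non-planar brace on six vertices), though this plays no role in the argument.
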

\begin{proof}
    We prove this statement by induction.
    Let $B$ be a non-planar, Pfaffian brace such that for all non-planar, Pfaffian braces with less vertices the statement holds.

    According to \Cref{thm:pfaffian} the brace $B$ is the trisum of three braces $B_1, B_2, B_3$, each having less vertices than $B$.
    Note that each of these braces may be planar.
    Let $C$ be the 4-cycle that forms the pairwise intersection of $B_1$, $B_2$, and $B_3$.
    By induction hypothesis and \Cref{lem:edgedisjoint4cycles} for each $i \in [3]$ there exists a 4-cycle $C^i \subseteq B_i$ that is edge-disjoint from $C$.

    Now let $H$ be a 4-cycle in $B$.
    If $H \subseteq B_i$ for some $i \in [3]$, then clearly both $C^j$ and $C^h$ with $j,h \in [3] \setminus \{ i \}$ are edge-disjoint from $H$.
    Accordingly $V(H) \cap V(C) \neq \emptyset$ and in particular $|V(H) \cap V(C)| \geq 2$, as otherwise $H$ would be contained in $B_i$ for some $i \in [3]$.
    Furthermore, if $|V(H) \cap V(C)| \geq 3$, we again have $H \subseteq B_i$ for some $i \in [3]$ and thus we conclude that $|V(H) \cap V(C)| = 2$.
    But this means that there must exist two distinct $i,j \in [3]$ such that $H \subseteq B_i \cup B_j$ and thus $C^h$ with $h \in [3] \setminus \{ i, j \}$ is edge-disjoint from $H$, completing our proof.
\end{proof}

\Cref{thm:main} is then an easy consequence of combining \Cref{lem:planar4cycles} and \Cref{lem:nonplanar4cycles}.

\bigskip

\textbf{Acknowledgements:}
The authors would like to thank an anonymous reviewer for several helpful comments that improved the presentation of the article.

This article is based on the Bachelor's thesis of the second author \cite{Johanni20242Factor}, which was supervised by the first author.

\bibliographystyle{alphaurl}
\bibliography{literature}

\appendix

\section{Some notes on the star product and 2-factor Hamiltonicity}\label{app:starproduct}

We start by noting that it is somewhat of a misnomer to speak of \emph{the} star product of two graphs, as the particular graph that results from this operation depends on which edges are added as can be seen in \Cref{fig:nonbip}.
It is also notable that the definition of 2-factor Hamiltonian graphs does not require the graph under consideration to actually have a 2-factor.
Thus a graph without any 2-factors is 2-factor Hamiltonian (see \Cref{fig:2fhstrange}).

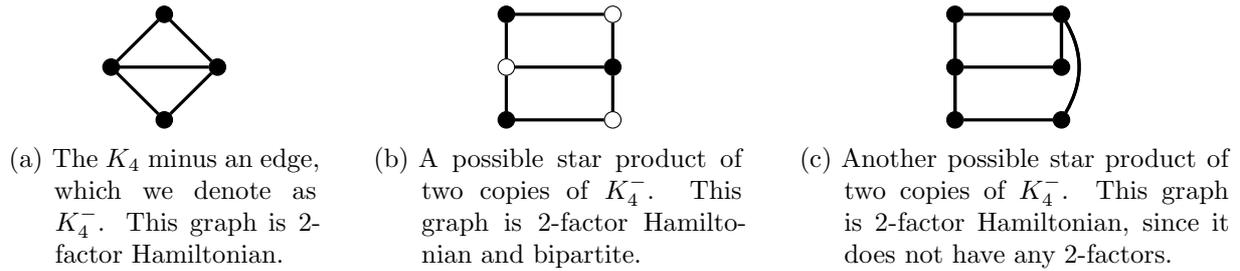
\begin{figure}[ht] 
\centering
\begin{subfigure}{0.25\textwidth}
    \centering
        \begin{tikzpicture}[scale=0.7]
            \node (A1) at (0,1) [draw, circle, fill, scale=0.6] {};
            \node (A2) at (1,0) [draw, circle, fill, scale=0.6] {};
            \node (A3) at (2,1) [draw, circle, fill, scale=0.6] {};
            \node (A4) at (1,2) [draw, circle, fill, scale=0.6] {};
            
            \foreach\i in {1,2,3}
            {
                \pgfmathtruncatemacro\iplus{\i+1}
                \path (A\i) edge[very thick] (A\iplus);
            }
            \path[very thick]
                (A3) edge (A1)
                (A4) edge (A1)
            ;
		\end{tikzpicture}
    \caption{The $K_4$ minus an edge, which we denote as $K_4^-$.
    This graph is 2-factor Hamiltonian.}
\end{subfigure}
\hfill
\begin{subfigure}{0.3\textwidth}
    \centering
    	\begin{tikzpicture}[scale=0.7]
            \node (A1) at (0,0) [draw, circle, fill, scale=0.6] {};
            \node (A2) at (2,1) [draw, circle, fill, scale=0.6] {};
            \node (A3) at (0,2) [draw, circle, fill, scale=0.6] {};
            
            \node (B1) at (2,0) [draw, circle, scale=0.6] {};
            \node (B2) at (0,1) [draw, circle, scale=0.6] {};
            \node (B3) at (2,2) [draw, circle, scale=0.6] {};
            
            \foreach\i in {1,2,3}
            {
                \path (A\i) edge[very thick] (B\i);
            }
            \foreach\i in {1,2}
            {
                \pgfmathtruncatemacro\iplus{\i+1}
                \path
                    (A\i) edge[very thick] (B\iplus)
                    (B\i) edge[very thick] (A\iplus)
                ;
            }
		\end{tikzpicture}
    \caption{A possible star product of two copies of $K_4^-$.
    This graph is 2-factor Hamiltonian and bipartite.}
    \label{fig:bipfromnonbip}
\end{subfigure}
\hfill
\begin{subfigure}{0.35\textwidth}
    \centering
		\begin{tikzpicture}[scale=0.7]

            \node (A1) at (0,0) [draw, circle, fill, scale=0.6] {};
            \node (A2) at (2,1) [draw, circle, fill, scale=0.6] {};
            \node (A3) at (0,2) [draw, circle, fill, scale=0.6] {};
            
            \node (B1) at (2,0) [draw, circle, fill, scale=0.6] {};
            \node (B2) at (0,1) [draw, circle, fill, scale=0.6] {};
            \node (B3) at (2,2) [draw, circle, fill, scale=0.6] {};
            
            \foreach\i in {1,2,3}
            {
                \path (A\i) edge[very thick] (B\i);
            }
            \foreach\i in {1,2}
            \path[very thick]
                (A1) edge (B2)
                (B2) edge (A3)
                (A2) edge (B3)
                (B1) edge[bend right] (B3)
            ;
            
		\end{tikzpicture}
    \caption{Another possible star product of two copies of $K_4^-$.
    This graph is 2-factor Hamiltonian, since it does not have any 2-factors.}
    \label{fig:2fhstrange}
\end{subfigure}
    \caption{Two different star products of the same simple graph that exhibit unexpected behaviour.}
    \label{fig:nonbip}
\end{figure}

Furthermore, somewhat unexpectedly, it is possible to build bipartite graphs as the star product of two non-bipartite graphs (see \Cref{fig:bipfromnonbip}).
For this to happen, the two graphs in the star product must have the property that the two vertices used to perform the operation are contained in all odd cycles of their respective graph.

To see why these remarks are relevant, we discuss the following proposition in \cite{FunkJLS20032Factor} that is presented without proof:
If $G$ is a bipartite graph that can be represented as the star product $(G_1,x) * (G_2,y)$, then $G$ is 2-factor Hamiltonian if and only if $G_1$ and $G_2$ are 2-factor Hamiltonian.

\begin{figure}[ht] 
\centering
\begin{subfigure}{0.4\textwidth}
    \centering
        \begin{tikzpicture}[scale=0.7]

            \node (A1) at (0,2) [draw, circle, fill, scale=0.6] {};
            \node (A2) at (2,1) [draw, circle, fill, scale=0.6] {};
            \node (A3) at (2,3) [draw, circle, fill, scale=0.6] {};
            \node (A4) at (4,2) [draw, circle, fill, scale=0.6] {};
            
            \node (B1) at (2,0) [draw, circle, scale=0.6] {};
            \node (B2) at (2,2) [draw, circle, scale=0.6] {};
            \node (B3) at (2,4) [draw, circle, scale=0.6] {};
            
            \path[very thick]
                (A1) edge (B1)
                (A1) edge (B2)
                (A1) edge (B3)
                (A4) edge (B1)
                (A4) edge (B2)
                (A4) edge (B3)
                (A2) edge (B1)
                (A2) edge (B2)
                (A3) edge (B2)
                (A3) edge (B3)
                ;
		\end{tikzpicture}
    \caption{A bipartite graph that is 2-factor Hamiltonian, due to not containing a 2-factor.}
\end{subfigure}
\qquad \qquad
\begin{subfigure}{0.4\textwidth}
    \centering
    	\begin{tikzpicture}[scale=0.7]

            \node (A1) at (0,2) [draw, circle, fill, scale=0.6] {};
            \node (A2) at (2,1) [draw, circle, fill, scale=0.6] {};
            \node (A3) at (2,3) [draw, circle, fill, scale=0.6] {};
            \node (A4) at (4,0) [draw, circle, fill, scale=0.6] {};
            \node (A5) at (4,2) [draw, circle, fill, scale=0.6] {};
            \node (A6) at (4,4) [draw, circle, fill, scale=0.6] {};
            
            \node (B1) at (2,0) [draw, circle, scale=0.6] {};
            \node (B2) at (2,2) [draw, circle, scale=0.6] {};
            \node (B3) at (2,4) [draw, circle, scale=0.6] {};
            \node (B4) at (4,1) [draw, circle, scale=0.6] {};
            \node (B5) at (4,3) [draw, circle, scale=0.6] {};
            \node (B6) at (6,2) [draw, circle, scale=0.6] {};
            
            \path[very thick]
                (A1) edge (B1)
                (A1) edge (B2)
                (A1) edge (B3)
                (A2) edge (B1)
                (A2) edge (B2)
                (A3) edge (B2)
                (A3) edge (B3)
                (A4) edge (B1)
                (A4) edge (B4)
                (A4) edge (B6)
                (A5) edge (B2)
                (A5) edge (B4)
                (A5) edge (B5)
                (A5) edge (B6)
                (A6) edge (B3)
                (A6) edge (B5)
                (A6) edge (B6)
                ;
		\end{tikzpicture}
    \caption{A bipartite graph that is not 2-factor Hamiltonian.}
\end{subfigure}
    \caption{The graph on the right is the star product of two copies of the graph to the left.}
    \label{fig:bipstrange}
\end{figure}
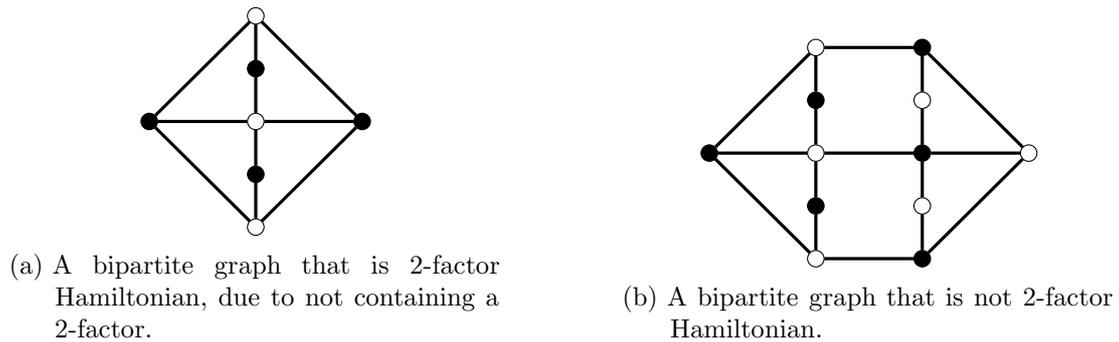
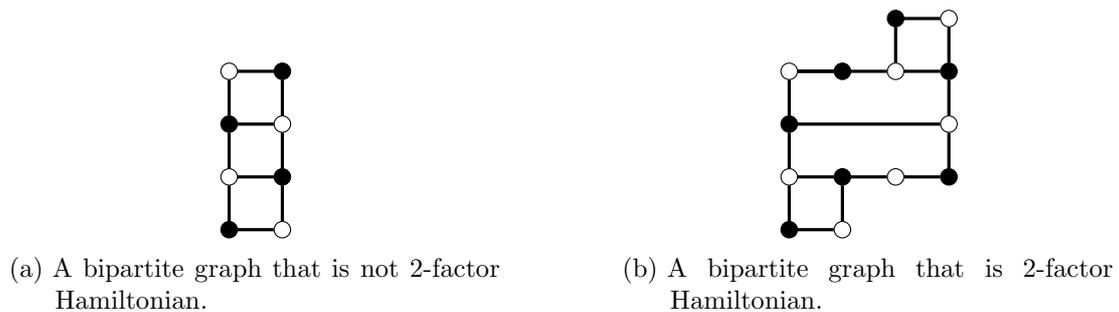

Both directions of this statement fail, as can be seen in \Cref{fig:bipstrange} and \Cref{fig:bipstrange2}.
In particular, even if we adjust the definition of 2-factor Hamiltonicity to demand that the graph in question must have a 2-factor, the forward direction still fails (see \Cref{fig:bipstrange2}).
This proposition has since found its way into several other articles (see \cite{Labbate2004Amalgams,FouquetTV2010Family,RomanielloZ2023Betwixt,MatsumotoNY2024Cubic}).
Luckily, as far as we can tell, this has not caused any further erroneous statements to be published.

\begin{figure}[ht] 
\centering
\begin{subfigure}{0.4\textwidth}
    \centering
		\begin{tikzpicture}[scale=0.7]

            \node (A1) at (0,0) [draw, circle, fill, scale=0.6] {};
            \node (A2) at (1,1) [draw, circle, fill, scale=0.6] {};
            \node (A3) at (0,2) [draw, circle, fill, scale=0.6] {};
            \node (A4) at (1,3) [draw, circle, fill, scale=0.6] {};
            
            \node (B1) at (1,0) [draw, circle, scale=0.6] {};
            \node (B2) at (0,1) [draw, circle, scale=0.6] {};
            \node (B3) at (1,2) [draw, circle, scale=0.6] {};
            \node (B4) at (0,3) [draw, circle, scale=0.6] {};
            
            \foreach\i in {1,2,3,4}
            {
                \path (A\i) edge[very thick] (B\i);
            }
            \foreach\i in {1,2,3}
            {
                \pgfmathtruncatemacro\iplus{\i+1}
                \path
                    (A\i) edge[very thick] (B\iplus)
                    (B\i) edge[very thick] (A\iplus)
                ;
            }
		\end{tikzpicture}
    \caption{A bipartite graph that is not 2-factor Hamiltonian.}
\end{subfigure}
\qquad \qquad
\begin{subfigure}{0.4\textwidth}
    \centering
		\begin{tikzpicture}[scale=0.7]

            \node (A1) at (0,0) [draw, circle, fill, scale=0.6] {};
            \node (A2) at (1,1) [draw, circle, fill, scale=0.6] {};
            \node (A3) at (0,2) [draw, circle, fill, scale=0.6] {};
            \node (A4) at (1,3) [draw, circle, fill, scale=0.6] {};
            
            \node (B1) at (1,0) [draw, circle, scale=0.6] {};
            \node (B2) at (0,1) [draw, circle, scale=0.6] {};
            \node (B3) at (0,3) [draw, circle, scale=0.6] {};
            
            \node (C1) at (2,1) [draw, circle, scale=0.6] {};
            \node (C2) at (3,2) [draw, circle, scale=0.6] {};
            \node (C3) at (2,3) [draw, circle, scale=0.6] {};
            \node (C4) at (3,4) [draw, circle, scale=0.6] {};

            \node (D1) at (3,1) [draw, circle, fill, scale=0.6] {};
            \node (D2) at (3,3) [draw, circle, fill, scale=0.6] {};
            \node (D3) at (2,4) [draw, circle, fill, scale=0.6] {};
            
            \foreach\i in {1,2,3}
            {
                \path (A\i) edge[very thick] (B\i);
            }
            \foreach\i in {1,2,3}
            {
                \path (C\i) edge[very thick] (D\i);
            }

            \path[very thick]
                (A1) edge (B2)
                (A2) edge (B1)
                (A2) edge (C1)
                (A3) edge (B2)
                (A3) edge (C2)
                (A4) edge (B3)
                (B3) edge (C3)
                (D1) edge (C2)
                (C3) edge (D2)
                (C4) edge (D2)
                (C4) edge (D3)
            ;
		\end{tikzpicture}
    \caption{A bipartite graph that is 2-factor Hamiltonian.}
\end{subfigure}
    \caption{The graph on the right is the star product of two copies of the graph to the left.}
    \label{fig:bipstrange2}
\end{figure}

In \cite{FunkJLS20032Factor} itself it turns out that in the proof of their main results this proposition is only needed in a very limited setting and there it actually holds.
In particular, Funk et al. only need their proposed statement to work if $G$, $G_1$, and $G_2$ are all bipartite and cubic.
Thus our goal is to prove the following.

\begin{lemma}\label{lem:starproductfixbip}
    Let $G$ be a cubic, bipartite graph that can be represented as the star product $(G_1,x) * (G_2,y)$ of two cubic, bipartite graphs $G_1$ and $G_2$.
    Then $G$ is 2-factor Hamiltonian if and only if $G_1$ and $G_2$ are 2-factor Hamiltonian\footnote{In \cite{Diwan2002Disconnected} Diwan already points out that the reverse direction of the statement holds, again without proof.}.
\end{lemma}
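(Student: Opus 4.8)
The plan is to argue directly with the 2-factors of $G$ and the principal 3-edge cut, never using that $G$ is a brace or even 3-connected. Write $N_{G_1}(x)=\{a_1,a_2,a_3\}$ and $N_{G_2}(y)=\{b_1,b_2,b_3\}$, labelled so that the principal 3-edge cut of $G$ is $\{e_1,e_2,e_3\}$ with $e_i=a_ib_i$; these three edges are pairwise disjoint straight from the definition of the star product. Put $X=V(G_1)\setminus\{x\}$ and $Y=V(G_2)\setminus\{y\}$, so that $V(G)=X\cup Y$ is a partition, $G[X]=G_1-x$, $G[Y]=G_2-y$, and $\partial(X)=\{e_1,e_2,e_3\}$. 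The crucial first step is the counting claim that \emph{every 2-factor $F$ of $G$ uses exactly two of $e_1,e_2,e_3$}: at most two is clear; not zero, since otherwise $F\cap G[X]$ would be a 2-factor of the bipartite graph $G_1-x$, whose two colour classes have different sizes (they are equal in the cubic bipartite graph $G_1$, and deleting $x$ unbalances them); and an even number, by comparing $\sum_{v\in A}\deg_F(v)=2|A|$ with $\sum_{v\in B}\deg_F(v)=2|B|$ over the two colour classes $A,B$ of $G[X]$ and using that $F\cap G[X]$ is balanced between them.

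Given the claim, the reverse implication is quick. Assume $G_1$ and $G_2$ are 2-factor Hamiltonian and let $F$ be any 2-factor of $G$ (one exists by \Cref{thm:konig}); by the claim $F$ meets $\partial(X)$ in exactly two edges, say $e_i$ and $e_j$. A routine degree check --- using that $a_i,a_j$ are the only vertices of $X$ on an edge of $F\cap\partial(X)$ --- shows that $F_1:=(F\cap G[X])\cup\{xa_i,xa_j\}$ is a 2-factor of $G_1$ and $F_2:=(F\cap G[Y])\cup\{yb_i,yb_j\}$ is a 2-factor of $G_2$. By hypothesis both are Hamiltonian cycles, so $F\cap G[X]=F_1-x$ is a Hamiltonian path of $G_1-x$ with ends $a_i,a_j$, and $F\cap G[Y]$ is a Hamiltonian path of $G_2-y$ with ends $b_i,b_j$; gluing these two spanning paths along $e_i$ and $e_j$ displays $F$ as a single cycle through all of $V(G)$, so $G$ is 2-factor Hamiltonian.

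For the forward implication I would again argue directly: if some $G_\ell$ --- say $G_1$ --- is not 2-factor Hamiltonian, produce a disconnected 2-factor of $G$. As $G_1$ is cubic bipartite it has a 2-factor by \Cref{thm:konig}, hence a 2-factor $F_1$ with at least two components; let $xa_i,xa_j$ be the two edges of $F_1$ at $x$ and let $D'$ be a component of $F_1$ not containing $x$, so $D'\subseteq G[X]$. By \Cref{thm:konig} partition $E(G_2)$ into three perfect matchings; since $yb_i$ and $yb_j$ share the vertex $y$ they lie in two distinct matchings $M,M'$, and $F_2:=M\cup M'$ is a 2-factor of $G_2$ through $yb_i$ and $yb_j$. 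Then $F:=(F_1-x)\cup(F_2-y)\cup\{e_i,e_j\}$ is a 2-factor of $G$ by the same degree bookkeeping as before, and it still contains the cycle $D'$, which covers only a proper subset of $V(G)$; hence $F$ is not a Hamiltonian cycle. (This direction can alternatively be read off from the machinery already built: a 2-factor Hamiltonian $G$ is 3-connected by \Cref{lem:3con}, the principal cut is then a non-trivial tight cut by \Cref{lem:starproductmatching} and \Cref{lem:tightcutshape} --- each side has at least five vertices, as cubic bipartite graphs have at least six --- its two tight cut contractions are precisely $G_1$ and $G_2$, and \Cref{lem:tightcut2FH} closes the argument.) I expect the only genuine obstacle to be the counting claim of the first paragraph, and within it the parity half; everything afterwards is just tracking cycle components through the evident bijection between 2-factors of $G$ and pairs consisting of a 2-factor of $G_1$ and one of $G_2$ that agree on which two of $e_1,e_2,e_3$ they use.
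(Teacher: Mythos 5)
Your proof is correct, but it takes a deliberately different route from the one in the paper. You argue directly on the 2-factors of $G$: the parity count across the principal 3-edge cut, the impossibility of a 2-factor of $G_1-x$ because deleting $x$ unbalances the colour classes of the cubic bipartite graph $G_1$, and the resulting fact that every 2-factor of $G$ uses exactly two cut edges and splits into a 2-factor of $G_1$ and one of $G_2$ (and conversely, via \Cref{thm:konig}, any bad 2-factor of $G_1$ can be completed to a disconnected 2-factor of $G$). All of these steps check out; the ``at most two is clear'' remark is not really needed, since evenness together with $|\partial(X)|=3$ already forces $0$ or $2$, and your colour-class degree-sum argument (or simply $2|X|=2|E(F\cap G[X])|+|F\cap\partial(X)|$) settles the parity. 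The paper instead proves the lemma twice via results quoted from the literature: once through \Cref{lem:starproductfix1} of Abreu, Labbate, and Sheehan (which requires the detour through \Cref{thm:matcov}, \Cref{thm:petersen}, \Cref{obs:perfmat}, and the cut analysis in \Cref{lem:biptechnical}), and once through Jackson's \Cref{lem:starproductfix2}; the paper explicitly says that easier methods close to \Cref{sec:braces} exist but chooses this longer route in order to supply proofs for those previously unproven claims and to flag where they fail without additional hypotheses. Your argument is essentially that easier method: it mirrors the bookkeeping in \Cref{lem:tightcut2FH} but needs neither tight cuts, nor \Cref{lem:3con}, nor any quasi-tightness discussion, and in particular your reverse direction works without first establishing any connectivity of $G$. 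Your parenthetical alternative for the forward direction via \Cref{lem:3con}, \Cref{lem:starproductmatching}, \Cref{lem:tightcutshape}, \Cref{obs:starproductcorrespondence}, and \Cref{lem:tightcut2FH} is also sound. So what you lose relative to the paper is only the side benefit of verifying the literature statements; what you gain is a shorter, self-contained proof.
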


We now present and prove two lemmas which lead towards a proof of \Cref{lem:starproductfixbip}.
The statement can be proven via easier methods, closely related to what we present in \Cref{sec:braces}, but this route allows us to provide a proof for another claim concerning the star product from the literature and also relate another generalisation of \Cref{lem:starproductfixbip} to the reader that came up during discussions with an author of \cite{FunkJLS20032Factor}.

Our first avenue towards a proof of \Cref{lem:starproductfixbip} is taken from \cite[p.\ 1851]{AbreuLS2012Pseudo}.
To state their proposition, which they provide without a proof, we need to define a few more concepts.
First, we call a cut $\Cut{X}$ in a graph \emph{quasi-tight} if $|\Cut{X} \cap M| = 1$ for every perfect matching $M$ of $G$.
We distinguish this notion from tight cuts, which have the same requirement, since we only defined tight cuts for matching covered graphs and cuts are always quasi-tight in graphs that do not have perfect matchings.
Furthermore, a \emph{bridge} in a graph $G$ is an edge $e$ such that $G - e$ is not connected.
We call a graph $G$ \emph{bridgeless} if it does not contain a bridge.

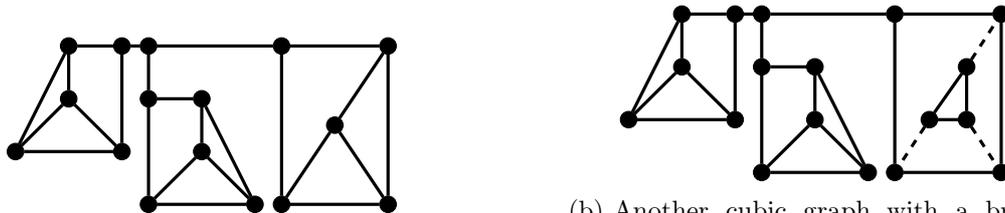
\begin{figure}[ht] 
\centering
\begin{subfigure}{0.4\textwidth}
    \centering
		\begin{tikzpicture}[scale=0.7]
            \node (A1) at (0,1) [draw, circle, fill, scale=0.6] {};
            \node (A2) at (1,3) [draw, circle, fill, scale=0.6] {};
            \node (A3) at (1,2) [draw, circle, fill, scale=0.6] {};
            \node (A4) at (2,3) [draw, circle, fill, scale=0.6] {};
            \node (A5) at (2,1) [draw, circle, fill, scale=0.6] {};
            \node (A6) at (2.5,3) [draw, circle, fill, scale=0.6] {};
            \node (A7) at (2.5,2) [draw, circle, fill, scale=0.6] {};
            \node (A8) at (2.5,0) [draw, circle, fill, scale=0.6] {};
            \node (A9) at (3.5,2) [draw, circle, fill, scale=0.6] {};
            \node (A10) at (3.5,1) [draw, circle, fill, scale=0.6] {};
            \node (A11) at (4.5,0) [draw, circle, fill, scale=0.6] {};
            \node (A12) at (5,3) [draw, circle, fill, scale=0.6] {};
            \node (A13) at (5,0) [draw, circle, fill, scale=0.6] {};
            \node (A14) at (6,1.5) [draw, circle, fill, scale=0.6] {};
            \node (A15) at (7,3) [draw, circle, fill, scale=0.6] {};
            \node (A16) at (7,0) [draw, circle, fill, scale=0.6] {};

            \path[very thick]
                (A1) edge (A2)
                (A1) edge (A3)
                (A1) edge (A5)
                (A2) edge (A3)
                (A2) edge (A4)
                (A3) edge (A5)
                (A4) edge (A5)
                (A4) edge (A6)
                (A6) edge (A7)
                (A6) edge (A12)
                (A7) edge (A8)
                (A7) edge (A9)
                (A8) edge (A10)
                (A8) edge (A11)
                (A9) edge (A10)
                (A9) edge (A11)
                (A10) edge (A11)
                (A12) edge (A13)
                (A12) edge (A15)
                (A13) edge (A14)
                (A13) edge (A16)
                (A14) edge (A15)
                (A14) edge (A16)
                (A15) edge (A16)
            ;
			\end{tikzpicture}
    \caption{A cubic graph with a bridge but without a perfect matching or a 2-factor.}
\end{subfigure}
\qquad \qquad
\begin{subfigure}{0.4\textwidth}
    \centering
		\begin{tikzpicture}[scale=0.7]
            \node (A1) at (0,1) [draw, circle, fill, scale=0.6] {};
            \node (A2) at (1,3) [draw, circle, fill, scale=0.6] {};
            \node (A3) at (1,2) [draw, circle, fill, scale=0.6] {};
            \node (A4) at (2,3) [draw, circle, fill, scale=0.6] {};
            \node (A5) at (2,1) [draw, circle, fill, scale=0.6] {};
            \node (A6) at (2.5,3) [draw, circle, fill, scale=0.6] {};
            \node (A7) at (2.5,2) [draw, circle, fill, scale=0.6] {};
            \node (A8) at (2.5,0) [draw, circle, fill, scale=0.6] {};
            \node (A9) at (3.5,2) [draw, circle, fill, scale=0.6] {};
            \node (A10) at (3.5,1) [draw, circle, fill, scale=0.6] {};
            \node (A11) at (4.5,0) [draw, circle, fill, scale=0.6] {};
            \node (A12) at (5,3) [draw, circle, fill, scale=0.6] {};
            \node (A13) at (5,0) [draw, circle, fill, scale=0.6] {};
            \node (A15) at (7,3) [draw, circle, fill, scale=0.6] {};
            \node (A16) at (7,0) [draw, circle, fill, scale=0.6] {};

            \node (C1) at (5.65,1) [draw, circle, fill, scale=0.6] {};
            \node (C2) at (6.35,1) [draw, circle, fill, scale=0.6] {};
            \node (C3) at (6.35,2) [draw, circle, fill, scale=0.6] {};

            \path[very thick]
                (C1) edge (C2)
                (C1) edge (C3)
                (C1) edge[dashed] (A13)
                (C2) edge (C3)
                (C2) edge[dashed] (A16)
                (C3) edge[dashed] (A15)
                (A1) edge (A2)
                (A1) edge (A3)
                (A1) edge (A5)
                (A2) edge (A3)
                (A2) edge (A4)
                (A3) edge (A5)
                (A4) edge (A5)
                (A4) edge (A6)
                (A6) edge (A7)
                (A6) edge (A12)
                (A7) edge (A8)
                (A7) edge (A9)
                (A8) edge (A10)
                (A8) edge (A11)
                (A9) edge (A10)
                (A9) edge (A11)
                (A10) edge (A11)
                (A12) edge (A13)
                (A12) edge (A15)
                (A13) edge (A16)
                (A15) edge (A16)
            ;
			\end{tikzpicture}
    \caption{Another cubic graph with a bridge that does not have a perfect matching or a 2-factor.}
\end{subfigure}
    \caption{The graph to the right is the star product of the graph to the left and $K_4$.
    Since the graph to the right does not have a perfect matching, the principal 3-edge cut within it, marked with dashed lines, must be quasi-tight.}
    \label{fig:bridgeexample}
\end{figure}

\begin{lemma}[Abreu, Labbate, and Sheehan \cite{AbreuLS2012Pseudo}]\label{lem:starproductfix1}
    Let $G = (G_1,v_1) * (G_2,v_2)$ be a cubic, bridgeless graph.
    Then $G$ is 2-factor Hamiltonian if and only if both $G_1$ and $G_2$ are 2-factor Hamiltonian and the principal 3-edge cut in $G$ is quasi-tight.
\end{lemma}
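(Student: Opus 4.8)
The plan is to reduce everything to a gluing correspondence between 2-factors of $G$ and pairs of 2-factors of $G_1,G_2$, exploiting that $G$ is cubic. Write $F=\{e_1,e_2,e_3\}$ for the principal 3-edge cut (that $|F|=3$ is immediate from the definition of the star product; note we cannot invoke \Cref{lem:starproductmatching} here since $G$ need not be bipartite, but only the size is needed). Since $G$ is cubic, complementation $H\mapsto E(G)\setminus E(H)$ is a bijection between the 2-factors and the perfect matchings of $G$. The cut $F$ separates $V(G)$ into $X:=V(G_1)\setminus\{v_1\}$ and its complement $V(G_2)\setminus\{v_2\}$; as $H$ is $2$-regular, the sum over $x\in X$ of the degree of $x$ in $H$ equals $2|X|$, and it also equals twice the number of edges of $H$ inside $X$ plus $|E(H)\cap F|$, so $|E(H)\cap F|$ is even, hence $0$ or $2$. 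By complementation $|M\cap F|$ is odd, hence $1$ or $3$, for every perfect matching $M$. In particular $F$ is quasi-tight if and only if no 2-factor of $G$ avoids $F$ entirely.

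Next I would set up the correspondence. For $i\in\{1,2\}$ let $p_i^1,p_i^2,p_i^3$ be the three edges of $G_i$ at $v_i$, indexed so that $e_k$ joins the $G_1$-end of $p_1^k$ to the $G_2$-end of $p_2^k$. Since $v_i$ has degree $2$ in any 2-factor of $G_i$, every 2-factor of $G_i$ omits exactly one of $p_i^1,p_i^2,p_i^3$. I claim the 2-factors $H$ of $G$ with $|E(H)\cap F|=2$ correspond bijectively to pairs $(H_1,H_2)$ where $H_i$ is a 2-factor of $G_i$ and both omit the same index $k$: from such a pair, delete $v_1$ and $v_2$ (the cycle of $H_i$ through $v_i$ becomes a path whose ends are the two neighbours of $v_i$ along its used edges) and add the two edges of $F$ indexed by $[3]\setminus\{k\}$; conversely, restricting $H$ to each side and re-inserting $v_i$ recovers $H_i$. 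Tracing components through this operation, the two paths and the two re-added edges fuse into a single cycle, while every other cycle of $H_1$ and of $H_2$ survives unchanged; hence $H$ is a Hamiltonian cycle of $G$ if and only if $H_1$ and $H_2$ are both Hamiltonian cycles.

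With these tools the three implications are short. For the ``if'' direction: if $F$ is quasi-tight then every 2-factor $H$ of $G$ has $|E(H)\cap F|=2$, so $H$ comes from a pair $(H_1,H_2)$; if moreover $G_1,G_2$ are 2-factor Hamiltonian then $H_1,H_2$ are Hamiltonian cycles, hence so is $H$. For the ``only if'' direction, quasi-tightness of $F$ is forced: a perfect matching $M$ with $|M\cap F|=3$ would make $G-E(M)$ a 2-factor avoiding $F$, splitting into a nonempty 2-factor of $G_1-v_1$ and one of $G_2-v_2$, giving at least two cycles and contradicting 2-factor Hamiltonicity. It remains to show $G$ being 2-factor Hamiltonian forces $G_1,G_2$ to be as well: suppose $G_1$ (say) has a 2-factor $H_1$ that is not a Hamiltonian cycle, omitting some $p_1^k$. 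Since $G$ is bridgeless and cubic, $e_k$ lies in a perfect matching $M$ of $G$; quasi-tightness forces $M\cap F=\{e_k\}$, so $G-E(M)$ restricted to the $G_2$-side (re-inserting $v_2$) is a 2-factor $H_2$ of $G_2$ omitting $p_2^k$. Gluing $H_1$ with $H_2$ yields a 2-factor of $G$ containing the extra cycles of $H_1$, hence not a Hamiltonian cycle, a contradiction.

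I expect the only non-routine point to be exactly this last step: finding, for the index $k$ omitted by the bad 2-factor of $G_1$, a matching partner on the $G_2$-side. The clean way through is the classical theorem (Petersen, in the strengthened form due to Schönberger) that every edge of a bridgeless cubic graph lies in a perfect matching, combined with the already-established quasi-tightness of $F$ — this is the sole place where the bridgeless hypothesis is used. Everything else (the parity count, the bijection, and the component bookkeeping through the gluing) needs to be written carefully but holds no surprises. One should also record that a bridgeless cubic graph has a perfect matching at all (Petersen's theorem), so the potentially vacuous cases where $G$, $G_1$, or $G_2$ has no 2-factor do not actually arise.
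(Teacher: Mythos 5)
Your proposal is correct and takes essentially the same route as the paper: parity of the intersection of 2-factors and perfect matchings with the principal 3-edge cut, gluing/splitting 2-factors of $G$ into 2-factors of $G_1$ and $G_2$ across that cut, and using that a bridgeless cubic graph is matching covered to produce the partner 2-factor on the other side. The only substantive difference is organisational: your direct degree-count parity argument makes the paper's preliminary steps (showing $G_1$ and $G_2$ are bridgeless and matching covered, which the paper uses to derive that same parity fact) unnecessary.
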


We define all of the notions above because in \cite{AbreuLS2012Pseudo} \Cref{lem:starproductfix1} is stated without asking for $G$ to be bridgeless.
This causes the lemma to fail as can be seen in \Cref{fig:bridgeexample}.
However, this is clearly just a minor oversight stemming from the likely unintended gaps in the definitions of 2-factor Hamiltonicity and tight cuts, as they are defined in \cite{AbreuLS2012Pseudo}.
Asking for $G$ to have a 2-factor, or equivalently a perfect matching, easily fixes this issue as well.
We thus just make this remark here for the sake of completeness.

As pointed out in \cite{AbreuLS2012Pseudo}, the last part of \Cref{lem:starproductfix1} concerning the principal 3-edge cut in $G$ having to be quasi-tight is necessary as can be seen in \Cref{fig:triangularprism}.
Since \Cref{lem:starproductfix1} is stated without a proof in \cite{AbreuLS2012Pseudo}, we want to show that this lemma holds.
For this purpose, we will need more information on the conditions under which a cubic graph is matching covered, as we want to make use of the following observation.

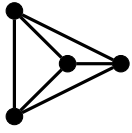
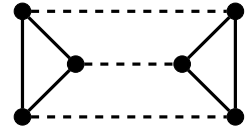
\begin{figure}[ht] 
\centering
\begin{subfigure}{0.4\textwidth}
    \centering
		\begin{tikzpicture}[scale=0.7]
            \node (A1) at (0,0) [draw, circle, fill, scale=0.6] {};
            \node (A2) at (0,2) [draw, circle, fill, scale=0.6] {};
            \node (A3) at (1,1) [draw, circle, fill, scale=0.6] {};
            \node (A4) at (2,1) [draw, circle, fill, scale=0.6] {};
            
            \foreach\i in {1,2,3}
            {
                \pgfmathtruncatemacro\iplus{\i+1}
                \path (A\i) edge[very thick] (A\iplus);
            }
            \path[very thick]
                (A2) edge (A4)
                (A4) edge (A1)
                (A1) edge (A3)
            ;
		\end{tikzpicture}
    \caption{The $K_4$, which is 2-factor Hamiltonian.}
\end{subfigure}
\qquad \qquad
\begin{subfigure}{0.4\textwidth}
    \centering
		\begin{tikzpicture}[scale=0.7]
            \node (A1) at (0,0) [draw, circle, fill, scale=0.6] {};
            \node (A2) at (0,2) [draw, circle, fill, scale=0.6] {};
            \node (A3) at (1,1) [draw, circle, fill, scale=0.6] {};
            \node (A4) at (3,1) [draw, circle, fill, scale=0.6] {};
            \node (A5) at (4,0) [draw, circle, fill, scale=0.6] {};
            \node (A6) at (4,2) [draw, circle, fill, scale=0.6] {};
            
            \foreach\i in {1,2,4,5}
            {
                \pgfmathtruncatemacro\iplus{\i+1}
                \path (A\i) edge[very thick] (A\iplus);
            }
            \path[very thick]
                (A2) edge[dashed] (A6)
                (A4) edge (A6)
                (A1) edge[dashed] (A5)
                (A1) edge (A3)
                (A3) edge[dashed] (A4)
            ;
		\end{tikzpicture}
    \caption{The triangular prism, also denoted as $\Compl{C_6}$, which is not 2-factor Hamiltonian.}
\end{subfigure}
    \caption{The graph $\Compl{C_6}$ on the right is the star product of two copies of $K_4$ to the left.
    Notably the principal 3-edge cut in $\Compl{C_6}$, marked by the dashed lines, is not tight.}
    \label{fig:triangularprism}
\end{figure}

\begin{observation}\label{obs:perfmat}
    If $G$ is a 2-factor Hamiltonian, cubic graph with a perfect matching $M$, then $G - M$ is a Hamiltonian cycle of $G$.
\end{observation}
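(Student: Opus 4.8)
The plan is to observe that $G - M$ is a $2$-factor of $G$ and then invoke the definition of $2$-factor Hamiltonicity directly. First I would note that since $G$ is cubic, every vertex $v \in V(G)$ has exactly three incident edges, and since $M$ is a perfect matching, exactly one of these three edges lies in $M$. Hence deleting the edges of $M$ removes precisely one edge at each vertex, so every vertex of $G - M$ has degree exactly $2$. Since $M$ is a set of edges (and not vertices), $G - M$ is a spanning subgraph of $G$, i.e.\ $V(G - M) = V(G)$. Therefore $G - M$ is a $2$-regular spanning subgraph of $G$, which is exactly the definition of a $2$-factor.

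Now, because $G$ is assumed to be $2$-factor Hamiltonian, every $2$-factor of $G$ is a Hamiltonian cycle; applying this to the $2$-factor $G - M$ yields that $G - M$ is a Hamiltonian cycle of $G$, as claimed. There is no real obstacle here: the only point that requires any verification at all is that $G - M$ is genuinely a $2$-factor, which follows immediately from counting incident edges at each vertex as above. (One should perhaps remark, for cleanliness, that since $M$ is perfect $G$ has an even number of vertices and $G-M$ is nonempty, so ``Hamiltonian cycle'' is meant in the usual sense of a single spanning cycle; but this is already subsumed by the stated definition of $2$-factor Hamiltonicity.)
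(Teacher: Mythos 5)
Your proof is correct and is exactly the argument the paper has in mind: the statement is presented as an observation without an explicit proof, and the intended justification is precisely that $G-M$ is a $2$-regular spanning subgraph (each vertex of the cubic graph loses exactly one incident edge from the perfect matching), hence a $2$-factor, which by $2$-factor Hamiltonicity must be a Hamiltonian cycle. Nothing is missing.
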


To get a characterisation of matching covered, cubic graphs, we make use of a result due to Tutte.

\begin{theorem}[Tutte \cite{Tutte1947Factorization} (see the Corollary on p.\ 111)]\label{cor:tutte}
For any integer $k \geq 2$, all $k$-regular, $k-1$-connected graphs with an even number of vertices are matching covered.
\end{theorem}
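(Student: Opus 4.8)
The plan is to derive this from Tutte's $1$-factor theorem (the main result of the very paper being cited): a graph $H$ has a perfect matching if and only if $o(H-S)\le|S|$ for every $S\subseteq V(H)$, where $o(\cdot)$ denotes the number of connected components of odd order. Fix $k\ge 2$, let $G$ be $k$-regular, $(k-1)$-connected, and of even order, and let $uv$ be an edge of $G$. Since $uv$ lies in a perfect matching of $G$ exactly when $G-u-v$ has one, and writing $T:=S\cup\{u,v\}$ turns Tutte's condition for $G-u-v$ into the condition that $o(G-T)\le|T|-2$ for every vertex set $T$ with $u,v\in T$, it suffices to establish that inequality for all such $T$; then every edge of $G$ lies in a perfect matching, that is, $G$ is matching covered.

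The first step is a parity observation. For any $T$ the vertices lying outside the odd components of $G-T$ form a set of even size, so $o(G-T)\equiv|V(G)|-|T|\equiv|T|\pmod 2$, the last step using that $|V(G)|$ is even. Consequently, if the inequality fails for some $T$ then $o(G-T)$ cannot equal $|T|-1$, so in fact $o(G-T)\ge|T|$. I would then argue by contradiction: suppose $G-T$ has odd components $C_1,\dots,C_m$ with $m\ge t:=|T|\ge 2$.

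The second step bounds each edge boundary $\Cut{C_i}$ from below by $k$. Since $(k-1)$-connectivity implies $(k-1)$-edge-connectivity (Whitney's inequality $\kappa(G)\le\lambda(G)$), we have $|\Cut{C_i}|\ge k-1$; and summing degrees over $C_i$ gives $k|C_i|=2e(C_i)+|\Cut{C_i}|$, where $e(C_i)$ counts the edges with both ends in $C_i$, so $|\Cut{C_i}|\equiv k\pmod 2$ because $|C_i|$ is odd. Together these exclude the value $k-1$ and force $|\Cut{C_i}|\ge k$. Now I would double-count $\Cut{T}$: every edge leaving some $C_i$ goes to $T$, so $|\Cut{T}|\ge\sum_{i=1}^m|\Cut{C_i}|\ge km\ge kt$; on the other hand $|\Cut{T}|=k|T|-2e(T)\le kt-2$, where $e(T)\ge 1$ counts the edges with both ends in $T$ and is positive precisely because $uv$ is one of them. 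Hence $kt\le kt-2$, a contradiction, so the inequality $o(G-T)\le|T|-2$ holds after all.

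I expect the delicate point to be the bound $|\Cut{C_i}|\ge k$: edge-connectivity on its own yields only $k-1$, and with that weaker bound the final count collapses to the trivially true inequality $t\ge 2$, producing no contradiction. It is the interaction of the connectivity hypothesis with the parity argument — the latter being exactly where the assumption that $|V(G)|$ is even enters — that pushes the bound up to $k$ and makes the whole argument go through; ensuring these two ingredients combine cleanly is the crux.
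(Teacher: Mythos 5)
Your argument is correct, and there is nothing in the paper to compare it against: the statement is quoted verbatim as a corollary from Tutte's 1947 paper and is not proved in this article. What you give is a sound, self-contained derivation from Tutte's 1-factor theorem, and every step checks out: the reduction of ``$uv$ lies in a perfect matching'' to $o(G-T)\le|T|-2$ for all $T\supseteq\{u,v\}$, the parity upgrade to $o(G-T)\ge|T|$ using that $|V(G)|$ is even (your phrase ``vertices lying outside the odd components'' should be read as ``vertices of $G-T$ in even components'', but the congruence you state is the right one), the exclusion of $|\Cut{C_i}|=k-1$ via the degree sum $k|C_i|=2e(C_i)+|\Cut{C_i}|$ in an odd component, and the final count $kt\le\sum_i|\Cut{C_i}|\le|\Cut{T}|=kt-2e(T)\le kt-2$, where the edge $uv$ inside $T$ supplies the decisive $-2$. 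You also correctly identify the crux, namely that edge-connectivity alone gives only $k-1$ and the parity argument is what pushes the bound to $k$. A small bonus of your route: vertex connectivity enters only through Whitney's inequality $\kappa\le\lambda$, so your proof actually establishes the formally stronger statement that $(k-1)$-edge-connected, $k$-regular graphs of even order are matching covered, which is more than the cited corollary demands.
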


We note that a close inspection of an earlier article by Schönberger in German contains the arguments necessary to show that any cubic, 2-edge connected graph is matching covered \cite{Schoenberger1934Beweis}.
Regardless, using \Cref{cor:tutte}, we can prove the following neat characterisation.

\begin{theorem}\label{thm:matcov}
    A cubic graph is matching covered if and only if it is 2-connected.
\end{theorem}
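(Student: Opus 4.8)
The plan is to treat the two implications separately, the forward one being a quick corollary of \Cref{cor:tutte} and the reverse one requiring a short argument. For the forward direction, that a cubic, $2$-connected graph is matching covered, I would first note that every cubic graph has an even number of vertices, since $3|V(G)| = 2|E(G)|$; hence a cubic, $2$-connected graph satisfies the hypotheses of \Cref{cor:tutte} with $k = 3$ and is therefore matching covered.

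For the converse I would argue by contradiction. Suppose $G$ is cubic and matching covered; recalling that matching covered graphs are connected, the failure of $2$-connectivity means $G$ has a cut vertex $v$. Since $\deg(v) = 3$ and $G - v$ has at least two components, each of which sends at least one edge to $v$ because $G$ is connected, the three edges at $v$ split among the components as $1+2$ or $1+1+1$; in either case some component $C$ of $G - v$ meets $v$ in exactly one edge $uv$, with $u \in V(C)$. A parity count inside $C$ then fixes the parity of $|V(C)|$: each vertex of $C$ has degree $3$ in $G$, so within $C$ the vertex $u$ has degree $2$ while every other vertex of $C$ has degree $3$, whence $2|E(C)| = \sum_{x \in V(C)} \deg_C(x) = 3|V(C)| - 1$ and $|V(C)|$ is odd.

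To finish, I would take any perfect matching $M$ of $G$, which exists since $G$ has an edge and is matching covered. As $uv$ is the only edge of $G$ leaving $C$, if $M$ did not pair $u$ with $v$ then every vertex of $C$, $u$ included, would be matched inside $C$, so $M \cap E(C)$ would be a perfect matching of $C$, contradicting that $|V(C)|$ is odd. Hence every perfect matching of $G$ contains $uv$, so neither of the other two edges at $v$ lies in any perfect matching of $G$, contradicting matching coveredness. The points deserving care are the pigeonhole step isolating a component that meets $v$ in a single edge and checking that both counting arguments are unaffected if $G$ is allowed to be a multigraph; I do not expect a real obstacle here, as the reverse direction only ever uses the existence of a cut vertex.
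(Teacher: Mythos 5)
Your proof is correct, and one half of it diverges from the paper. For the implication that a cubic, $2$-connected graph is matching covered you do exactly what the paper does: handshake gives an even number of vertices, and then \Cref{cor:tutte} with $k=3$ applies. For the converse, however, the paper simply observes that a cubic, matching covered graph other than $K_4$ has at least six vertices and is $1$-extendable, so \Cref{thm:extendabilitybasics} yields $2$-connectivity (with $K_4$ checked separately), whereas you give a self-contained argument: a cut vertex $v$ of the cubic graph forces some component $C$ of $G-v$ to be joined to $v$ by a single edge $uv$, the degree sum $2|E(C)| = 3|V(C)| - 1$ forces $|V(C)|$ odd, hence every perfect matching must use $uv$ and the other two edges at $v$ lie in no perfect matching, contradicting matching coveredness. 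Your route is more elementary (it replaces the appeal to Plummer's extendability theorem by a two-line parity count and needs no separate treatment of $K_4$), at the cost of being slightly longer; the paper's version buys brevity by leaning on \Cref{thm:extendabilitybasics}, which it has already stated for other purposes. Your closing caveats are harmless: the component-isolation step is just pigeonhole on three edges over at least two components, and in the loopless multigraph setting relevant here (tight cut contractions discard loops) both counts go through unchanged, since a parallel pair $uv$ would contradict ``exactly one edge leaving $C$'' rather than break the degree sum.
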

\begin{proof}
    The statement clearly holds for $K_4$.
    All other cubic, matching covered graphs have six or more vertices and thus \Cref{thm:extendabilitybasics} tells us that they are 2-connected.
    If we have a cubic, 2-connected graph, we can in turn use \Cref{cor:tutte} to see that they are matching covered.
\end{proof}

We will also require the following classic result by Petersen that guarantees that we can find perfect matchings and thus 2-factors in our graphs.

\begin{theorem}[Petersen \cite{Petersen1891Theorie}]\label{thm:petersen}
    Every cubic, bridgeless graph has a perfect matching.
\end{theorem}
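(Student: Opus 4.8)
The plan is to deduce Petersen's theorem from Tutte's result \Cref{cor:tutte} stated above, the point being that a cubic bridgeless graph is automatically $2$-connected, so that \Cref{cor:tutte} applies with $k = 3$.

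I would first collect two elementary facts. A cubic graph $G$ has an even number of vertices, since $2|E(G)| = \sum_{v \in V(G)} \deg(v) = 3|V(G)|$. And a bridgeless graph containing at least one edge is connected: were it disconnected, deleting any edge would leave it disconnected, so every edge would be a bridge. A cubic graph certainly has edges, so $G$ is connected.

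The heart of the argument is to show that $G$ has no cut vertex. Suppose $v$ were one, so that $G - v$ has components $A_1, \dots, A_m$ with $m \geq 2$. Each $A_i$ is joined to $v$ by at least one edge (as $G$ is connected), and $v$ contributes only $3$ edges in total, so with $m \geq 2$ at least one component, say $A_1$, is joined to $v$ by a single edge $e$. Then $e$ is the only edge between $A_1$ and the rest of $G$, so $G - e$ is disconnected, contradicting bridgelessness. Hence $G$ is connected with no cut vertex, i.e., $2$-connected.

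With this in hand the result is immediate: $G$ is $3$-regular, $2$-connected, and has an even number of vertices, so \Cref{cor:tutte} applies and $G$ is matching covered; since $G$ has an edge, that edge lies in a perfect matching, so $G$ has a perfect matching. The only step demanding any care — the mild ``main obstacle'' — is the cut-vertex argument, together with checking the degenerate points (connectedness, parity of $|V(G)|$) needed to invoke \Cref{cor:tutte}; everything else is a direct appeal to results already established. If one instead prefers to avoid \Cref{cor:tutte}, the same conclusion follows from Tutte's $1$-factor theorem by the classical counting argument: for any $S \subseteq V(G)$, each odd component of $G - S$ sends an odd number, hence — by bridgelessness — at least three edges into $S$, so if $k$ is the number of odd components then $3k \leq 3|S|$ and thus $k \leq |S|$.
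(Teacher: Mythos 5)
Your proof is correct, but it is worth noting that the paper does not prove this statement at all: \Cref{thm:petersen} is quoted as a classical black-box result of Petersen, so any derivation is ``a different route'' by default. Your route --- cubic bridgeless $\Rightarrow$ $2$-connected $\Rightarrow$ matching covered via \Cref{cor:tutte} (with $k=3$, using that $3|V(G)|=2|E(G)|$ forces $|V(G)|$ even) $\Rightarrow$ has a perfect matching --- is sound and non-circular, since Tutte's 1947 corollary does not rest on Petersen's theorem; it is essentially the same mechanism the paper itself uses in \Cref{thm:matcov}, and indeed you could shortcut by citing \Cref{thm:matcov} directly once $2$-connectivity is established. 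The two steps that needed care are fine: the cut-vertex argument is correct (a cut vertex $v$ of a connected cubic graph sends its three edges to at least two components of $G-v$, so some component receives exactly one edge, which is then a bridge), and your claim that a bridgeless graph with an edge is connected is valid under the paper's definition of a bridge (``$G-e$ is not connected''), though under the standard definition (deletion increases the number of components) one would instead just argue componentwise, which costs nothing. The closing alternative via Tutte's $1$-factor theorem (each odd component of $G-S$ sends an odd, hence at least three, edges to $S$, giving $3\,o(G-S)\le 3|S|$) is the classical proof and is also correct as sketched, modulo the trivial $S=\emptyset$ case. What your approach buys is self-containedness within the paper's toolkit; what the paper's citation buys is brevity and historical accuracy.
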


We are now ready to prove \Cref{lem:starproductfix1}.
Note that some of the arguments here, especially in the forward direction, resemble the arguments we presented in the proof of \Cref{lem:tightcut2FH}.

\begin{proof}[Proof of \Cref{lem:starproductfix1}]
    Let the principal 3-edge cut in $G$ be the set $\Cut{X} = \{ e,f,g \}$ with $X = V(G_1) \setminus \{ v_1 \}$.
    For both $i \in [2]$, we let $e_i$ be the unique edge incident to $v_i$ in $G_i$ that shares an endpoint with $e$ and we define $f_i$ and $g_i$ analogously.
    
    Suppose that $G_1$ is not bridgeless.
    Clearly $G_1$ cannot have more than one connected component, as this would imply that $G$ itself is not connected, which contradicts $G$ being bridgeless.
    Let $b$ be a bridge of $G_1$.
    Then we must have $b \not\in E(G)$, as otherwise $G$ would not be bridgeless.
    Thus $b$ must be incident to $v_1$ and without loss of generality we assume that $b = e_1$.
    But this implies that $e$ is also a bridge, another contradiction.
    Thus $G_1$ and analogously $G_2$ are bridgeless.

    Next we note that \Cref{thm:petersen} tells us that any bridgeless graph contains a 2-factor and if said graph is also 2-factor Hamiltonian this implies that it is 2-connected and thus matching covered according to \Cref{thm:matcov}.

    Suppose now that $G$ is 2-factor Hamiltonian, but at least one of the graphs $G_1$ and $G_2$ is not.
    Without loss of generality, we assume that there exists a 2-factor $F$ in $G_1$ that is not a Hamiltonian cycle.
    We further assume without loss of generality that $e_1 \not\in E(F)$.
    Then we note that since $G$ is matching covered, $G$ has a perfect matching containing $e$.
    Therefore, according to \Cref{obs:perfmat}, the graph $G$ contains a 2-factor $H$ that does not contain $e$, which is a Hamiltonian cycle in $G$, since $G$ is 2-factor Hamiltonian.
    Clearly, $(F - v_1) \cup (H - V(G_1)) \cup \{ f,g \}$ is a 2-factor of $G$ that is not a Hamiltonian cycle, contradicting $G$ being 2-factor Hamiltonian.
    Thus, as analogous arguments work for $G_2$, both $G_1$ and $G_2$ must be 2-factor Hamiltonian.

    Still under the assumption that $G$ is 2-factor Hamiltonian, we must now further prove that $\Cut{X}$ is a quasi-tight cut in $G$.
    First, let $x \in \Cut{X}$.
    Since both $G_1$ and $G_2$ are matching covered, there exist two perfect matchings $M_1 \in E(G_1)$ and $M_2 \in E(G_2)$ such that $x_i \in M_i$ for both $i \in [2]$.
    Clearly, $M = (M_1 \cup M_2 \cup \{ x \}) \setminus \{ x_1, x_2 \}$ is a perfect matching of $G$ with $|M \cap \Cut{X}| = 1$.
    In particular, the existence of $M$ confirms that $|V(G_i) \setminus \{ v_i \}|$ has odd parity for both $i \in [2]$ and thus $|N \cap \Cut{X}|$ must also be odd for any perfect matching $N$ of $G$.
    As a consequence, if $\Cut{X}$ is not quasi-tight, there exists a perfect matching $M'$ of $G$ with $\Cut{X} \subseteq M'$.
    However, as noted in \Cref{obs:perfmat}, the graph $G - M'$ must be a 2-factor and since $\Cut{X}$ is a cut in $G$, this 2-factor cannot be a Hamiltonian cycle, which contradicts $G$ being 2-factor Hamiltonian.
    Thus the forward direction of the statement holds.

    Suppose instead that $G_1$ and $G_2$ are 2-factor Hamiltonian and that $\Cut{X}$ is a quasi-tight cut in $G$.
    If $G$ is not 2-factor Hamiltonian, then it does at least contain a 2-factor $F$ that is not a Hamiltonian cycle due to it being bridgeless, as noted earlier.
    In particular, since $\Cut{X}$ is quasi-tight and $E(G) \setminus E(F)$ is a perfect matching, $F$ uses exactly 2 edges of $\Cut{X}$.
    Without loss of generality we assume that $e,f \in E(F)$.
    Furthermore, we know that $F$ contains at least one cycle that is entirely contained either in $G_1$ or $G_2$, leading us to assume without loss of generality that this holds for $G_1$.
    We now note that $( (F - V(G_2)) \cup \{ v_1 \} ) \cup \{ e_1,f_1 \}$ is a non-Hamiltonian 2-factor of $G_1$, since the cycle $C \subseteq F$ with $e,f \in E(C)$ contains at least two vertices from $V(G_1)$, as a principal 3-edge cut is a matching.
    This contradicts the 2-factor Hamiltonicity of $G_1$ and we are therefore done.
\end{proof}

With this, we are almost ready to prove \Cref{lem:starproductfixbip}.
However, we require a technical statement on the behaviour of certain odd cuts in bipartite graphs.
Let $G$ be a bipartite graph and let $A,B \subseteq V(G)$ be a pair of disjoint sets with $A \cup B = V(G)$ such that all edges of $G$ have one endpoint in $A$ and the other in $B$, which must exists since $G$ is bipartite.
We call $A$ and $B$ the two \emph{colour classes} of $G$ and note that the choice of $A$ and $B$ may not be unique, which is something we will ignore, since it is not relevant to our arguments.

\begin{lemma}\label{lem:biptechnical}
    Let $G$ be a bipartite graph with colour classes $A$ and $B$, and let $\Cut{X}$ be a cut in $G$ such that all endpoints of the edges in $\Cut{X}$ that are contained in $X$ are found in $A$, and there exists a perfect matching $M$ of $G$ with $|\Cut{X} \cap M| = 1$.
    
    Then $|X \cap A| - |X \cap B| = 1 = |\Compl{X} \cap B| - |\Compl{X} \cap A|$ and for all perfect matchings $M'$ of $G$ we also have $|\Cut{X} \cap M'| = 1$.
\end{lemma}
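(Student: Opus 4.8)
The plan is to prove a single identity, valid for \emph{every} perfect matching $M'$ of $G$, namely $|\Cut{X} \cap M'| = |X \cap A| - |X \cap B|$, and then read off both assertions of the lemma from it together with the fact that $G$ has a perfect matching. The point to be careful about — and really the only one — is that the hypothesis on $\Cut{X}$ is a property of the cut itself, not of the particular matching $M$: in whatever perfect matching an edge of $\Cut{X}$ happens to lie, its endpoint in $X$ is in $A$.

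First I would fix an arbitrary perfect matching $M'$ of $G$ and partition $X$ according to $M'$. Let $Y \subseteq X$ be the set of vertices of $X$ that $M'$ matches to a vertex of $\Compl{X}$, so that $|Y| = |\Cut{X} \cap M'| =: k$ (distinct edges of a matching have distinct endpoints), and let $X \setminus Y$ be the set of vertices of $X$ matched by $M'$ inside $X$. By the hypothesis on $\Cut{X}$ every edge of $\Cut{X}$ has its endpoint in $X$ lying in $A$, hence $Y \subseteq X \cap A$. The edges of $M'$ joining two vertices of $X \setminus Y$ form a perfect matching of $G[X \setminus Y]$, and since $G$ is bipartite with colour classes $A$ and $B$, each such edge has exactly one endpoint in $A$ and one in $B$; therefore $|(X \setminus Y) \cap A| = |(X \setminus Y) \cap B| = \tfrac{1}{2}|X \setminus Y|$. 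Combining these with $Y \subseteq X \cap A$ gives $|X \cap A| = k + \tfrac{1}{2}|X \setminus Y|$ and $|X \cap B| = \tfrac{1}{2}|X \setminus Y|$, so subtracting yields $|\Cut{X} \cap M'| = k = |X \cap A| - |X \cap B|$.

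Applying this identity to the matching $M$ supplied in the hypothesis gives $|X \cap A| - |X \cap B| = |\Cut{X} \cap M| = 1$, and substituting this value back into the identity for an arbitrary $M'$ gives $|\Cut{X} \cap M'| = 1$, which is the last claim. For the remaining equality I would use that $G$ has a perfect matching, so its colour classes satisfy $|A| = |B|$; hence $|\Compl{X} \cap B| - |\Compl{X} \cap A| = (|B| - |X \cap B|) - (|A| - |X \cap A|) = |X \cap A| - |X \cap B| = 1$, completing the proof. No step is a genuine obstacle; the one place to be deliberate is the inclusion $Y \subseteq X \cap A$ in the second paragraph, since that is precisely where one must invoke the structural assumption on the cut $\Cut{X}$ rather than on the matching $M$, and it is what makes the identity uniform over all perfect matchings.
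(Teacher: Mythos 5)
Your proof is correct and rests on the same counting idea as the paper's: since every edge of the cut meets $X$ in $A$, vertices of $X \cap B$ can only be matched inside $X$, so the number of cut edges in any perfect matching equals $|X \cap A| - |X \cap B|$, which the given matching $M$ pins to $1$. You merely package this as a single identity uniform over all perfect matchings (and derive the complement equality from $|A| = |B|$), whereas the paper treats $M$ first and then the general matching, so no substantive difference.
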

\begin{proof}
    Since $|\Cut{X} \cap M| = 1$, we know that all but one of the vertices of $X$ is contained in an edge of $M$.
    For the edge $e \in \Cut{X} \cap M$ we know that the endpoint of $e$ in $X$ lies in $A$.
    Thus we must have $|X \cap A| - |X \cap B| = 1$ and $|\Compl{X} \cap B| - |\Compl{X} \cap A| = 1$ is an immediate consequence of that.
    For any perfect matching $M'$ we then note that $M'$ must cover all vertices of $X \cap B$, but it can only do so with vertices of $X \cap A$, since all endpoints of edges within $\Cut{X}$ that lie in $X$ also lie in $A$.
    The same of course holds for the vertices within $\Compl{X} \cap A$.
    Thus $M'$ contains exactly one edge of $\Cut{X}$.
\end{proof}

This leads us into our first proof of \Cref{lem:starproductfixbip}.

\begin{proof}[Proof of \Cref{lem:starproductfixbip} via \Cref{lem:starproductfix1}]
    Suppose that $G$ is 2-factor Hamiltonian in addition to being cubic and bipartite.
    \Cref{thm:konig} tells us that $G$ contains a 2-factor, implying that $G$ is Hamiltonian and thus 2-connected.
    In turns this allows us to apply \Cref{lem:starproductfix1} to prove that $G_1$ and $G_2$ are 2-factor Hamiltonian.

    For the other direction, suppose that $G_1$ and $G_2$ are 2-factor Hamiltonian in addition to being cubic and bipartite.
    We again note that $G_1$ and $G_2$ are matching covered.
    For both $i \in [2]$, let $A_i,B_i$ be the colour classes for $G_i$ and let these be chosen such that the neighbours of $v_1$ lie in $A_1$ and the neighbours of $v_2$ lie in $B_2$.
    Clearly $(A_1 \cup A_2) \setminus \{ v_2 \}$ and $(B_1 \cup B_2) \setminus \{ v_1 \}$ are valid colour classes for $G$.
    Furthermore, as argued in the proof of \Cref{lem:starproductfix1}, the fact that $G_1$ and $G_2$ are matching covered, allows us to show that there exist a perfect matching of $G$ that contains exactly one edge of the principal 3-edge cut $\Cut{X}$.
    Thus we satisfy the requirements of \Cref{lem:biptechnical} and know that for any perfect matching $M$ of $G$ we have $|\Cut{X} \cap M| = 1$.
    We can therefore apply \Cref{lem:starproductfix1} to confirm that $G$ is 2-factor Hamiltonian.
\end{proof}

As an alternative avenue towards a proof of \Cref{lem:starproductfixbip}, Jackson \cite{Jackson2024Personal} suggested the following.

\begin{lemma}\label{lem:starproductfix2}
    Let $G$ be a bipartite graph that can be represented as the star product $(G_1,v_1) * (G_2,v_2)$, and let any two edges of the principal 3-edge cut of $G$ occur together in some 2-factor of $G$.
    Then $G$ is 2-factor Hamiltonian if and only if $G_1$ and $G_2$ are 2-factor Hamiltonian.
\end{lemma}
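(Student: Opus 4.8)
The plan is to prove \Cref{lem:starproductfix2} by adapting the argument from the proof of \Cref{lem:starproductfix1}, but replacing the use of \Cref{thm:petersen} and \Cref{thm:matcov} (which give 2-factors and matching-coveredness from bridgelessness) with the hypothesis that some fixed pair of edges of the principal 3-edge cut $\Cut{X}=\{e,f,g\}$ occurs together in a 2-factor of $G$. As before, write $X=V(G_1)\setminus\{v_1\}$, and for $i\in[2]$ let $e_i,f_i,g_i$ be the edges of $G_i$ incident with $v_i$ that correspond to $e,f,g$.

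\medskip

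For the forward direction, suppose $G$ is 2-factor Hamiltonian but, without loss of generality, $G_1$ has a 2-factor $F$ that is not a Hamiltonian cycle; pick the labelling so that $e_1\notin E(F)$, i.e. $f_1,g_1\in E(F)$. By hypothesis there is a 2-factor $H$ of $G$ using two edges of $\Cut{X}$; since $G$ is 2-factor Hamiltonian, $H$ is a Hamiltonian cycle, and by relabelling $e,f,g$ (the pair in the hypothesis is fixed, but we may still choose which of the two is ``$f$'') we may assume $f,g\in E(H)$. Then $(F-v_1)\cup(H-V(G_1))\cup\{f,g\}$ is a 2-factor of $G$: the two paths $F-v_1$ (through $X$) and $H-V(G_1)$ (through $\Compl X$) are glued at their endpoints by $f$ and $g$, and every vertex is covered. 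This 2-factor has a component contained in $\Compl X$ coming from the Hamiltonian cycle $H$ whenever $F$ had a component avoiding $v_1$ — more carefully, $F-v_1$ is a disjoint union of the path $P$ from $F$'s component through $v_1$ together with the other cycles of $F$, and those other cycles survive as components of the new 2-factor, so it is not a Hamiltonian cycle, contradicting 2-factor Hamiltonicity of $G$. The symmetric argument handles $G_2$.

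\medskip

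For the reverse direction, suppose $G_1,G_2$ are 2-factor Hamiltonian but $G$ is not. The hypothesis already gives us a 2-factor of $G$, so $G$ has a 2-factor $F'$ that is not a Hamiltonian cycle. The key point I expect to be the main obstacle is establishing that \emph{every} 2-factor of $G$ uses exactly two edges of $\Cut{X}$ — in the bridgeless case this came from $\Cut{X}$ being quasi-tight via matching-coveredness, which we no longer have for free. I would recover this using bipartiteness: by \Cref{lem:starproductmatching} $\Cut{X}$ is an induced matching of size $3$, its endpoints in $X$ all lie in one colour class (as in the proof of \Cref{lem:starproductfixbip}, after choosing colour classes so the neighbours of $v_1$ are monochromatic), and since the hypothesised 2-factor's complement is a perfect matching meeting $\Cut{X}$ in exactly one edge, \Cref{lem:biptechnical} forces $|\Cut{X}\cap M|=1$ for every perfect matching $M$, equivalently every 2-factor meets $\Cut{X}$ in exactly two edges; a 2-factor meeting it in zero or one edge would contradict parity or would induce, after contraction, a 2-factor of $G_1$ or $G_2$ with too few edges at the contraction vertex. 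Given this, take $F'$ with, say, $e,f\in E(F')$; the cycle $C$ of $F'$ containing $e$ and $f$ meets both $X$ and $\Compl X$, and since the endpoints of $e,f$ are non-adjacent, $C\cap \InducedG{G}{X}$ is a path of length at least one. Contracting $C-V(G_2)$ into $v_1$ turns the part of $F'$ inside $G_1$ into a 2-factor of $G_1$, adding back $e_1,f_1$ at $v_1$; the remaining cycles of $F'$ lying inside $X$ persist, so this 2-factor of $G_1$ is not Hamiltonian, contradicting 2-factor Hamiltonicity of $G_1$. If instead $F'$ has a cycle entirely inside $\Compl X$ we argue symmetrically with $G_2$; since $F'$ is disconnected at least one of these cases occurs, completing the proof.
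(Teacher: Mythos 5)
The decisive gap is in your reverse direction, at the point where you assert that the $X$-endpoints of the three cut edges all lie in one colour class of $G$ ``after choosing colour classes so the neighbours of $v_1$ are monochromatic''. This is not something you can choose: the $2$-colouring of $G$ is forced up to swapping classes, and in \Cref{lem:starproductfix2} neither $G_1$ nor $G_2$ is assumed bipartite or cubic, so the device you import from the proof of \Cref{lem:starproductfixbip} --- where the colour classes of $G$ are assembled from colour classes of the bipartite graphs $G_1$ and $G_2$ --- is unavailable here. Monochromaticity can genuinely fail for a bipartite star product: in the graph of \Cref{fig:bipfromnonbip} (a bipartite star product of two copies of $K_4^-$) the three $X$-endpoints of the principal $3$-edge cut are split between the two classes (and, consistently, that cut fails the lemma's hypothesis). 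Deriving monochromaticity \emph{from the hypothesis} is precisely the heart of the paper's proof: it takes one $2$-factor through $e,f$ and a second one through $e,g$, and compares the parities of $|X|$ forced in the two cases --- the crossing component leaves inside $X$ a path with an odd number of vertices when the two $X$-endpoints it uses have the same colour and an even number otherwise, while all non-crossing components inside $X$ are even cycles. Your argument only ever invokes a single pair from the hypothesis, so this step is a missing idea, not a missing detail.

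Two further points. Your claim that ``the hypothesised 2-factor's complement is a perfect matching'' presumes $G$ is cubic, which the lemma does not assume (in the example above the complement of the unique $2$-factor is a single edge); the paper instead applies \Cref{thm:konig} to the $2$-factor itself, a spanning $2$-regular bipartite subgraph, splits it into two perfect matchings, and uses monochromaticity to see that the two cut edges it contains land in different matchings --- this produces a perfect matching meeting $\Cut{X}$ exactly once, after which \Cref{lem:biptechnical} and the same splitting give $|\Cut{X}\cap E(F)|=2$ for every $2$-factor $F$. Similarly, \Cref{lem:starproductmatching} is only proved for cubic bipartite factors; all you may use, and all that is needed, is that the principal $3$-edge cut is a matching by definition (in the example above it is not induced). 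Finally, your parenthetical ``the pair in the hypothesis is fixed'' suggests you read the hypothesis as supplying a single pair; it must be read as \emph{every} pair of cut edges lying in some $2$-factor, and your forward direction needs exactly this, since the pair $f_1,g_1$ used at $v_1$ by the bad $2$-factor of $G_1$ is dictated by $F$ and cannot be aligned with a fixed pair by relabelling. With that reading, your forward gluing coincides with the paper's and is correct, as is your final contraction argument once the two-edges-per-$2$-factor claim has been properly established.
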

\begin{proof}
    Let the principal 3-edge cut in $G$ be the set $\{ e,f,g \} = \Cut{X}$ with $X = V(G_1) \setminus \{ v_1 \}$.
    For both $i \in [2]$, we let $e_i$ be the unique edge incident to $v_i$ in $G_i$ that shares an endpoint with $e$ and we define $f_i$ and $g_i$ analogously.
    Note that the requirement on $\Cut{X}$ immediately implies that $G$ actually contains a 2-factor, which saves us a lot of hassle.

    Suppose that $G$ is 2-factor Hamiltonian and that at least one of the graphs $G_1$ and $G_2$ is not 2-factor Hamiltonian.
    Without loss of generality, let this graph be $G_1$, with $F$ being a 2-factor of $G_1$ that is not a Hamiltonian cycle.
    We assume that $e_1,f_1 \in E(F)$, again without loss of generality, and let $H$ be a Hamiltonian cycle in $G$ that uses $e$ and $f$.
    The graph $((H - V(G_1)) \cup (F - v_1)) \cup \{ e,f \}$ is now clearly a 2-factor of $G$ that is not a Hamiltonian cycle, contradicting our assumptions on $G$.

    We can thus move on to supposing that $G_1$ and $G_2$ are 2-factor Hamiltonian and $G$ is not.
    Before we proceed further, we first study the structure of $\Cut{X}$ in $G$.
    Let $A$ and $B$ be the two colour classes of $G$ and suppose that the endpoints of the edges in $\Cut{X}$ that are found in $X$ are not all found in $A$ or $B$.
    Without loss of generality, we assume that two of these endpoints, particularly those of $e$ and $f$, are found in $A$ and the endpoint in $X$ belonging to $g$ is found in $B$.
    
    Consider a 2-factor $F'$ of $G$ that contains $e$ and $f$ and let $C'$ be the unique component of $F'$ that uses edges from $\Cut{X}$.
    Thus we have $e,f \in E(C')$.
    Note that since $G$ is bipartite any 2-factor of $G$ is also bipartite and thus in particular, $C'$ has even length.
    Therefore, since both endpoints of $e$ and $f$ that are found in $X$ lie in $A$, the path $C' \setminus \Compl{X}$ has even length and contains an odd number of vertices.
    All other components of $F'$ are contained entirely in $G_1$ or $G_2$.
    This then implies that $X$ contains an odd number of vertices.

    However, there also exists another 2-factor $F''$ in $G$ that contains the edges $e$ and $g$ and we let $C''$ be the unique component of $F''$ that uses edges from $\Cut{X}$.
    Since $C''$ has even length and the endpoints of $e$ and $g$ from $X$ belong to opposite colour classes, the path $C'' \setminus \Compl{X}$ is of odd length and contains an even number of vertices.
    This in turn implies that $X$ actually contains an even number of vertices, contradicting our earlier observation.
    Thus we actually know that all endpoints of edges from $\Cut{X}$ that lie in $X$ must all come from either $A$ or $B$.
    Without loss of generality we will assume that they all lie in $A$.

    Let us now again consider a 2-factor $F$ of $G$ using the edges $e$ and $f$.
    We note that of course $g \not\in E(F)$ and all components of $F$ are cycles of even length, as observed earlier.
    Thus, by applying \Cref{thm:konig}, we arrive at the conclusion that the set $E(F)$ can be partitioned into two disjoint perfect matchings $N,N'$ of $G$.
    In particular, since the endpoints in $X$ of both $e$ and $f$ lie in $A$, we must have $e \in N$ and $f \in N'$, or vice versa.
    Thus $N$ is a perfect matching of $G$ with $|\Cut{X} \cap N| = 1$ and this allows us to apply \Cref{lem:biptechnical} to conclude that we have $|\Cut{X} \cap M| = 1$ for all perfect matchings in $G$.
    As an extension of the above argument, we also have $|\Cut{X} \cap E(F)| = 2$ for all 2-factors of $G$.
    
    Suppose that there exists some 2-factor $F$ of $G$ that is not a Hamiltonian cycle, then we now know that $|E(F) \cap \Cut{X}| = 2$.
    We assume without loss of generality that $\{ e,f \} = E(F) \cap \Cut{X}$ and let $C$ be the component in $F$ that contains $e$ and $f$.
    Furthermore, since $F$ is not a Hamiltonian cycle, $F$ contains at least one component that is entirely found in $G_1$ or $G_2$ and we assume without loss of generality that this holds for $G_1$.
    Now the graph $((F - V(G_2)) \cup \{ v_1 \}) \cup \{ e_1,f_1 \}$ is a 2-factor in $G_1$ that is not a Hamiltonian cycle and thus contradicts the 2-factor Hamiltonicity of $G_1$.
\end{proof}

It is now fairly easy to prove \Cref{lem:starproductfixbip} via this result and thus conclude this appendix.

\begin{proof}[Proof of \Cref{lem:starproductfixbip} via \Cref{lem:starproductfix2}]
    Suppose first that $G$ is 2-factor Hamiltonian in addition to being cubic and bipartite, which means that it is 3-connected and matching covered due to \Cref{lem:3con} and \Cref{thm:matcov}.
    Let $\Cut{X} = \{ e,f,g \}$ be the principal 3-edge cut in $G$ with $X \subseteq V(G_1)$.
    Using \Cref{lem:starproductmatching} and \Cref{lem:tightcutshape}, we note that $\Cut{X}$ is a non-trivial tight cut.
    Thus for each $x \in \Cut{X}$ there exists a perfect matching $M_x$ in $G$ with $M \cap \Cut{X} = \{ x \}$.
    Therefore $G - M_x$ is a 2-factor containing the two remaining edges of $\Cut{X}$ according to \Cref{obs:perfmat}, allowing us to apply \Cref{lem:starproductfix2} to show that $G_1$ and $G_2$ are 2-factor Hamiltonian.

    In the reverse direction we suppose that $G_1$ and $G_2$ are 2-factor Hamiltonian, but $G$ is not.
    As noted above, we know that $G_1$ and $G_2$ are both matching covered.
    Let $x,y \in \Cut{X}$ be two distinct edges in $G$.
    Since $G_1$ and $G_2$ are matching covered, \Cref{obs:perfmat} implies that for both $i \in [2]$, the graph $G_i$ contains a 2-factor $F_i$ using $x_i$ and $y_i$.
    The graph $( (F_1 - v_1) \cup (F_2 - v_2) ) \cup \{ x,y \}$ is therefore a 2-factor in $G$ that uses both $x$ and $y$.
    Thus we meet the requirement on the principal 3-edge cut posed in the statement of \Cref{lem:starproductfix2} and can apply it to show that $G$ must be 2-factor Hamiltonian as well.
\end{proof}

\end{document}